\newtheorem{thm}{Theorem}[section]
\newtheorem{lemma}[thm]{Lemma}
\newtheorem{prop}[thm]{Proposition}
\newtheorem{claim}[thm]{Claim}
\newtheorem{constr}[thm]{Construction}
\newtheorem*{prop:vbs}{\Cref{prop:vbs}}
\newtheorem*{thm:noperfectfits}{\Cref{thm:noperfectfits}}
\theoremstyle{definition}
\newtheorem{defn}[thm]{Definition}
\newtheorem{rmk}[thm]{Remark}
\numberwithin{equation}{section}
\renewcommand{\epsilon}{\varepsilon}
\newcommand{\cut}{\!\bbslash\!}
\newcommand{\ind}{\mathrm{ind}}
\renewcommand{\top}{\mathrm{top}}
\newcommand{\red}{\mathrm{red}}
\newcommand{\Homeo}{\mathrm{Homeo}}
\newcommand{\Isom}{\mathrm{Isom}}
\newcommand{\brloc}{\mathrm{brloc}}
\begin{document}

\title{Veering branched surfaces, surgeries, and geodesic flows}

\author{Chi Cheuk Tsang}
\address{University of California, Berkeley \\
    970 Evans Hall \#3840 \\
    Berkeley, CA 94720-3840}
\email{chicheuk@math.berkeley.edu}
\thanks{Chi Cheuk Tsang was partially supported by a grant from the Simons Foundation \#376200.}

\maketitle

\begin{abstract}
We introduce veering branched surfaces as a dual way of studying veering triangulations. We then discuss some surgical operations on veering branched surfaces. Using these, we provide explicit constructions of some veering branched surfaces whose dual veering triangulations correspond to geodesic flows of negatively curved surfaces. We construct these veering branched surfaces on (i) complements of Montesinos links whose double branched covers are unit tangent bundles of negatively curved orbifolds, and (ii) complements of full lifts of filling geodesics in unit tangent bundles of negatively curved surfaces, when the geodesics have no triple intersections and have ($n \geq 4$)-gons as complementary regions. As an application, this provides explicit Markov partitions of geodesic flows on negatively curved surfaces. 
In an appendix, we classify the drilled unit tangent bundles which admit a veering triangulation corresponding to a geodesic flow, by characterizing when there are no perfect fits. 
\end{abstract}

\section{Introduction} \label{sec:intro}

Veering triangulations were introduced by Ian Agol in \cite{Ago11} as a tool for studying mapping tori of pseudo-Anosov homeomorphisms. The veering triangulations combinatorially encapsulate the pseudo-Anosov monodromies by encoding a periodic folding sequence of train tracks. Recently, due to work of Agol, Gu\'eritaud, Schleimer, Segerman, Landry, Minsky, Taylor, and the author (\cite{Gue16}, \cite{SS19}, \cite{SS20}, \cite{LMT21}, \cite{SS21}, \cite{AT22}, \cite{SS23}, \cite{SS22c}), it has been realized that this is merely a special case of a bigger picture: veering triangulations can be used to combinatorially encode pseudo-Anosov flows without perfect fits in general.

We briefly explain what we mean by this, for the definitions, precise statements and references see \Cref{sec:background}. Given a pseudo-Anosov flow $\phi$ on an orientable closed 3-manifold $N$ and a collection of closed orbits $\mathcal{C}$, one can construct a veering triangulation on the cusped 3-manifold $N \backslash \mathcal{C}$, provided that $\phi$ and $\mathcal{C}$ satisfy a technical condition called \textit{no perfect fits}. Conversely, given a veering triangulation on a cusped 3-manifold $M$, one can construct a pseudo-Anosov flow on the Dehn filled closed 3-manifold $M(s)$ which satisfies the no perfect fits condition, provided that the filling coefficient $s$ satisfies a natural intersection number condition. In fact, work of Schleimer and Segerman, to appear, shows that the two directions of construction are inverses of each other. Even without knowledge of this, however, the two directions of construction already allow us to study pseudo-Anosov flows in new ways using veering triangulations. 
One of the advantages of doing so is that veering triangulations are discrete objects, and so the associated constructions can often be described explicitly using some finite collection of data. One of the goals of this paper is to demonstrate this feature.

As we pointed out, the application of veering triangulations to study suspension flows of pseudo-Anosov mapping tori was the original motivation behind their conception. Another class of pseudo-Anosov flows that are frequently studied are \textit{geodesic flows} of negatively curved surfaces. These are flows on the unit tangent bundle of surfaces (or orbifolds in general) with a negatively curved Riemannian metric, which sends a vector $v$ to the vector $\gamma'(t)$ at time $t$, if $\gamma$ is the geodesic with initial velocity $v$. One reason why these are popular to study is that together with suspension flows of Anosov maps, they constitute all of the algebraic Anosov flows in dimension 3, meaning they are the only Anosov flows that are induced by a left invariant vector field on the right quotient of a 3-dimensional Lie group, see \cite{Tom75}.

Another reason is that throughout the development of low-dimensional topology, it has always been an important topic to study closed curves on a surface, in particular their topological type and their growth rates, see for example \cite{FLP79}, \cite{MM99}, \cite{Mir16}. Now, closed orbits of the geodesic flow of a negatively curved surface correspond precisely to isotopy classes of closed curves, hence one can hope to answer some of the surface-theoretic questions by studying these geodesic flows on 3-manifolds. 

Yet another reason is that geodesic flows form the prototype for contact Anosov flows. These are Anosov flows which are also Reeb flows to some contact form. Foulon and Hasselblat showed in \cite{FH13} that one can construct many examples of contact Anosov flows starting with geodesic flows, and Barbot showed in \cite{Bar01} that contact Anosov flows must be skew $\mathbb{R}$-covered in general, hence induce representations into $\Homeo(\mathbb{R})$.

In view of the importance of geodesic flows, it is natural to ask: What do the veering triangulations that correspond to them look like? In this paper, we provide some answers to this question by introducing the tool of \textit{veering branched surfaces}, and explicitly constructing examples of these which are dual to veering triangulations corresponding to geodesic flows.

We first explain the motivation behind veering branched surfaces. Given a veering triangulation of a 3-manifold $M$, one can combinatorially construct its \textit{unstable branched surface} $B$. In terms of the correspondence between veering triangulations and pseudo-Anosov flows, the unstable branched surface carries the unstable lamination of the flow in the filled 3-manifold $M(s)$. We observe that using some basic properties, we can completely characterize, among all branched surfaces, the ones that arise as the unstable branched surface of some veering triangulation. Moreover, from the unstable branched surface one can recover the veering triangulation by taking the dual ideal triangulation. Hence by defining a veering branched surface to be a branched surface that satisfies the characterizing properties of an unstable branched surface, we see that studying veering branched surfaces is an equivalent, and in a sense, dual, way of studying veering triangulations. We state this concretely as the following proposition. For definitions see \Cref{sec:vbs}.

\begin{prop:vbs}
Let $B$ be a veering branched surface in an oriented 3-manifold $M$ whose complementary regions are all cusped torus shells. Then the dual ideal triangulation of $B$ is a veering triangulation $\Delta$ of $M$, and $B$ can be identified with the unstable branched surface for $\Delta$.
\end{prop:vbs}

The advantage of working with veering branched surfaces however, is that one can construct and manipulate branched surfaces inside desirable ambient 3-manifolds, instead of working with triangulations that comprise the 3-manifolds themselves. Indeed, in this paper we introduce the generalized notion of an \textit{almost veering branched surface} and two basic surgical operations: \textit{horizontal} and \textit{vertical surgery}, which one can perform on almost veering branched surfaces to construct veering branched surfaces thus their dual veering triangulations, but might otherwise be difficult to visualize from the perspective of the triangulation.

One should compare our horizontal and vertical surgery with Schleimer and Segerman's \textit{veering Dehn surgery}. In all three cases, an annulus or a M\"obius band carried by the 2-skeleton of the veering triangulation is slit open and tetrahedra are inserted within. In Schleimer and Segerman's terminology, an annulus with all \textit{flat} edges is slit open for horizontal surgery, while an annulus with some \textit{sharp} edges is slit open for vertical surgery. More details of veering Dehn surgery will appear in \cite{SS22b}, and comparing these surgery operations will be a topic of future work.

The horizontal surgery operation, or rather variants of it, is the main tool we use for constructing veering branched surfaces in this paper.
Even though our techniques work more generally, for this paper we will focus on the following two particular settings, and only indicate how to generalize in the remarks.

\begin{constr} \label{constr:genus0}
Consider a closed orientable genus zero 2-dimensional orbifold $S$ with negative Euler characteristic. Let $c$ be a simple closed curve on $S$ that passes through all the cone points. Let $\overset{\leftrightarrow}{c}$ be the \textit{full lift} of $c$ in the unit tangent bundle of $S$, defined to be $\{\pm c'(t)\} \subset T^1 S$. There is an involution on $T^1 S \backslash \overset{\leftrightarrow}{c}$ induced by reflecting $S$ across $c$, and the quotient of this involution is a \textit{Montesinos link} complement.

In this setting, we can construct explicit veering branched surfaces on $T^1 S \backslash \overset{\leftrightarrow}{c}$. These veering branched surfaces are dual to veering triangulations which correspond to the geodesic flow on $T^1 S$. Moreover, they can be quotiented down to veering branched surfaces on the corresponding Montesinos link complements. 
\end{constr}

\begin{constr} \label{constr:highergenus}
Consider a closed orientable surface $S$ with negative Euler characteristic. Let $c$ be a filling collection of mutually nonparallel curves on $S$, which has no triple intersections and whose complementary regions in $S$ are $(n \geq 4)$-gons. 
Define the full lift of $c$, $\overset{\leftrightarrow}{c}$, as above.

In this setting, we can construct explicit veering branched surfaces on $T^1 S \backslash \overset{\leftrightarrow}{c}$. These veering branched surfaces are dual to veering triangulations which correspond to the geodesic flow on $T^1 S$.
\end{constr}

Even though our constructions of the veering branched surfaces are explicit, we do not claim to have explicit descriptions of the veering triangulations themselves, due to the fact that computing dual triangulations is a difficult task by hand. Without knowing the triangulations however, there are still useful invariants which can be computed directly from their dual veering branched surfaces.

We point out one of these in particular. Given a veering triangulation of a 3-manifold $M$, one can combinatorially define its \textit{reduced flow graph} $\Phi_{\red}$, which is a directed graph naturally embedded in $M$. In terms of the correspondence between veering triangulations and pseudo-Anosov flows, the reduced flow graph encodes a Markov partition for the pseudo-Anosov flow on $M(s)$. In particular one can study the periodic orbits of the pseudo-Anosov flow by studying cycles carried by $\Phi_{\red}$. Indeed, a quantitative approach of this has been carried out in \cite{LMT20} and \cite{LMT21}. 

Now by dualizing the definition of the (reduced) flow graph, it is not difficult to read it off from the veering branched surface dual to the given veering triangulation. Hence we can in particular determine the reduced flow graphs of the veering triangulations dual to the veering branched surfaces in \Cref{constr:highergenus}, see for example \Cref{fig:hexnonamarkov}. These will then encode explicit Markov partitions for geodesic flows.

\begin{constr} \label{constr:markovpart}
Let $S$ be a closed orientable 2-dimensional orbifold with negative Euler characteristic. We can construct explicit Markov partitions for the geodesic flow on $T^1 S$. If $S$ is a surface, we can arrange for the Markov partition to have $-108 \chi(S)$ flow boxes.
\end{constr}

The problem of representing geodesic flows in terms of explicit systems of symbolic dynamics has a long history, see for example \cite{Ser81}, \cite{Ser86}, \cite{AF91}, \cite{KU07}. However, it is not always made clear how the corresponding Markov partitions look like. The point is that the graph encoding a Markov partition on its own does not contain instructions of how to join up the top and bottom faces of the flow boxes at each vertex, and this is crucial information if one wants to study problems about knottedness and linkedness of orbits. Some of the more recent work, for example \cite{Ghy07}, \cite{Pin14}, \cite{Deh15}, and \cite{DP18}, do contain this additional information, and in this paper this information is included naturally as part of our approach. 

Another remark is that, to the author's knowledge, the approach in most of the previous work is geometric, making use of some auxiliary hyperbolic metric (with \cite{Ghy07} and \cite{Pin14} being notable exceptions), whereas our approach is entirely topological. Finally, we remark that our methods also give explicit Markov partitions for (the nonwandering set of) geodesic flows on cusped hyperbolic surfaces, via a doubling trick.

Coming out of this paper, an obvious direction for future research is to construct veering triangulations or branched surfaces for geodesic flows with other types of orbits drilled out. The careful reader will notice, however, that we mentioned the technical condition of no perfect fits must be satisfied for this task to be possible. One should therefore ascertain when this no perfect fits condition holds, before trying to do the construction in general. We make some progress towards this by characterizing exactly when there are no perfect fits. The exact result is the following, see \Cref{sec:directproof} for definitions.

\begin{thm:noperfectfits}
Let $\Sigma$ be a closed oriented hyperbolic surface and $c$ be a collection of oriented closed geodesics. Then the geodesic flow on $T^1 \Sigma$ has no perfect fits relative to the lift $\overset{\rightarrow}{c}$ if and only if every oriented closed geodesic $d$ on $\Sigma$ has a positive intersection point with some element of $c$. 

In particular, if $c$ is a collection of closed geodesics, then the geodesic flow on $T^1 \Sigma$ has no perfect fits relative to the full lift $\overset{\leftrightarrow}{c}$ if and only if $c$ is filling.
\end{thm:noperfectfits}

Another direction for future work is to understand better the veering triangulations dual to the veering branched surfaces constructed in this paper. For example, we suspect that their canonical shearing decompositions (introduced in \cite{SS23}) admit a neat description. It is also worth investigating whether these triangulations are geometric. 

To that end, we compiled, in an indirect way, tables of all the veering triangulations dual to the veering branched surfaces in \Cref{constr:genus0} that appear in the veering triangulation census \cite{VeeringCensus}, and have included these tables in \Cref{sec:table}. By studying the triangulations listed in the tables carefully, one might be able to work out some patterns and infer the form of the veering triangulations, or at least some of their invariants, in general.

Here is an outline of this paper. In \Cref{sec:background}, we recall some background knowledge about veering triangulations, pseudo-Anosov flows, geodesic flows and Montesinos links. In \Cref{sec:vbs}, we define veering branched surfaces and almost veering branched surfaces, and show how veering branched surfaces are dual to veering triangulations. In \Cref{sec:surgery}, we introduce some surgical operations on almost veering branched surfaces. The two basic ones are horizontal and vertical surgery. We also explain a variant of horizontal surgery, which we call halved concurrent horizontal surgery, which will be used extensively in \Cref{sec:genus0}. 

In \Cref{sec:genus0}, we explain \Cref{constr:genus0} by separating into a few different cases depending on the number and order of the cone points. In \Cref{sec:highergenus}, we explain \Cref{constr:highergenus} using some of the knowledge from \Cref{sec:genus0}. From this we will derive \Cref{constr:markovpart}. In \Cref{sec:questions}, we discuss some questions and future directions coming out of this paper. 

There are two appendices. In \Cref{sec:directproof}, we prove \Cref{thm:noperfectfits}. In \Cref{sec:table}, we identify the veering triangulations we constructed on Montesinos link complements in \Cref{constr:genus0} which appear in the veering triangulation census, and compile their IsoSig codes in some tables.

{\bf Acknowledgements.} I would like to thank Ian Agol and Michael Landry for their support and encouragement throughout this project. I would like to thank Saul Schleimer, Henry Segerman, Mario Shannon, and Jonathan Zung for helpful conversations. I would like to thank Pierre Dehornoy and Caroline Series for comments on an earlier version. I would like to thank the anonymous referees' comments for improving the presentation of the paper, in particular for a helpful suggestion regarding \Cref{fig:hsurinv}.

{\bf Notational conventions.} Throughout this paper, 
\begin{itemize}
    \item $X \cut Y$ will denote the metric completion of $X \backslash Y$ with respect to the induced path metric from $X$. In addition, we will call the components of $X \cut Y$ the complementary regions of $Y$ in $X$.
    \item $\widetilde{X}$ will denote the universal cover of $X$, unless otherwise stated.
    \item Notation such as $\mathbb{R}_t$, $[0,1]_t$, etc. will mean that we use the variable $t$ as the coordinate on $\mathbb{R}$, $[0,1]$, etc.
\end{itemize}

\section{Background} \label{sec:background}

\subsection{Veering triangulations} \label{subsec:vt}

We recall the definition of a veering triangulation.

An \textit{ideal tetrahedron} is a tetrahedon with its 4 vertices removed. The removed vertices are called the \textit{ideal vertices}. 

Let $M$ be the interior of a compact oriented 3-manifold with torus boundary components. An \textit{ideal triangulation} of $M$ is a decomposition of $M$ into ideal tetrahedra glued along pairs of faces.

A \textit{taut structure} on an ideal triangulation is a labelling of the dihedral angles by $0$ or $\pi$, such that 
\begin{itemize}
    \item Each tetrahedron has exactly two dihedral angles labelled $\pi$, and they are opposite to each other.
    \item The angle sum around each edge in the triangulation is $2\pi$.
\end{itemize}

A \textit{transverse taut structure} is a taut structure along with a coorientation on each face, such that for any edge labelled $0$ in a tetrahedron, exactly one of the faces adjacent to it is cooriented inwards.

A \textit{transverse taut ideal triangulation} is an ideal triangulation with a transverse taut structure.

\begin{defn} \label{defn:vt}
A \textit{veering structure} on a transverse taut ideal triangulation of $M$ is a coloring of the edges by red or blue, so that if we look at each tetrahedron with a $\pi$-labelled edge in front, the four outer $0$-labelled edges, starting from an end of the front edge and going counter-clockwise, are colored red, blue, red, blue, respectively. We call such a tetrahedron a \textit{veering tetrahedron}.

A \textit{veering triangulation} is a transverse taut ideal triangulation with a veering structure.
\end{defn}

\Cref{fig:veertet} shows a veering tetrahedron in a veering triangulation.

\begin{figure} 
    \centering
    \fontsize{14pt}{14pt}\selectfont
    \resizebox{!}{4cm}{
\begingroup%
  \makeatletter%
  \providecommand\color[2][]{%
    \errmessage{(Inkscape) Color is used for the text in Inkscape, but the package 'color.sty' is not loaded}%
    \renewcommand\color[2][]{}%
  }%
  \providecommand\transparent[1]{%
    \errmessage{(Inkscape) Transparency is used (non-zero) for the text in Inkscape, but the package 'transparent.sty' is not loaded}%
    \renewcommand\transparent[1]{}%
  }%
  \providecommand\rotatebox[2]{#2}%
  \newcommand*\fsize{\dimexpr\f@size pt\relax}%
  \newcommand*\lineheight[1]{\fontsize{\fsize}{#1\fsize}\selectfont}%
  \ifx\svgwidth\undefined%
    \setlength{\unitlength}{325.82965819bp}%
    \ifx\svgscale\undefined%
      \relax%
    \else%
      \setlength{\unitlength}{\unitlength * \real{\svgscale}}%
    \fi%
  \else%
    \setlength{\unitlength}{\svgwidth}%
  \fi%
  \global\let\svgwidth\undefined%
  \global\let\svgscale\undefined%
  \makeatother%
  \begin{picture}(1,0.40856788)%
    \lineheight{1}%
    \setlength\tabcolsep{0pt}%
    \put(0,0){\includegraphics[width=\unitlength,page=1]{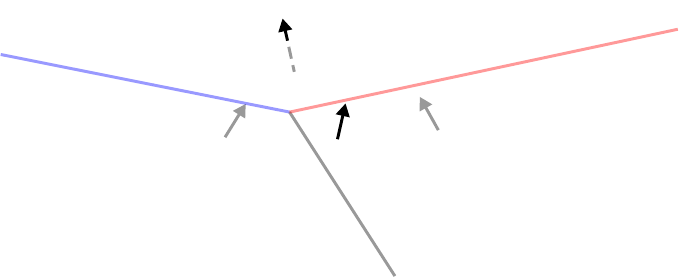}}%
    \put(0.45702311,0.36605622){\color[rgb]{0,0,0}\makebox(0,0)[lt]{\lineheight{1.25}\smash{\begin{tabular}[t]{l}$\pi$\end{tabular}}}}%
    \put(0.49904816,0.14324942){\color[rgb]{0,0,0}\transparent{0.40000001}\makebox(0,0)[lt]{\lineheight{1.25}\smash{\begin{tabular}[t]{l}$\pi$\end{tabular}}}}%
    \put(0.76168485,0.11769163){\color[rgb]{0,0,1}\makebox(0,0)[lt]{\lineheight{1.25}\smash{\begin{tabular}[t]{l}$0$\end{tabular}}}}%
    \put(0.31075696,0.09433859){\color[rgb]{1,0,0}\makebox(0,0)[lt]{\lineheight{1.25}\smash{\begin{tabular}[t]{l}$0$\end{tabular}}}}%
    \put(0.25420744,0.29133508){\color[rgb]{0,0,1}\transparent{0.40000001}\makebox(0,0)[lt]{\lineheight{1.25}\smash{\begin{tabular}[t]{l}$0$\end{tabular}}}}%
    \put(0.62556736,0.30455855){\color[rgb]{1,0,0}\transparent{0.40000001}\makebox(0,0)[lt]{\lineheight{1.25}\smash{\begin{tabular}[t]{l}$0$\end{tabular}}}}%
    \put(0,0){\includegraphics[width=\unitlength,page=2]{veertet.pdf}}%
  \end{picture}%
\endgroup%
}
    \caption{A tetrahedron in a transverse veering triangulation. There are no restrictions on the colors of the top and bottom edges.} 
    \label{fig:veertet}
\end{figure}

We next recall the definitions of the unstable branched surface and the (reduced) flow graph associated to a veering triangulation.

\begin{defn} \label{defn:branchsurf}
Let $M$ be a 3-manifold. A \textit{branched surface} $B$ is a compact subset of $M$ locally of the form of one of the pictures in \Cref{fig:branchsurflocal}.

\begin{figure}
    \centering
    \resizebox{!}{2cm}{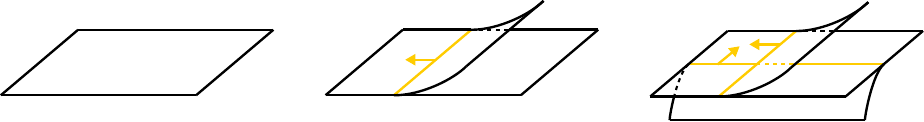} 
    \caption{The local models for branched surfaces. The arrows indicate the maw coorientation of the branch locus.}
    \label{fig:branchsurflocal}
\end{figure}

The set of points where $B$ is locally of the form of \Cref{fig:branchsurflocal} middle or right is called the \textit{branch locus} of $B$ and is denoted by $\brloc(B)$. The points where $B$ is locally of the form of \Cref{fig:branchsurflocal} right are called the \textit{triple points} of $B$. The complementary regions of $\brloc(B)$ in $B$ are called the \textit{sectors} of $B$. 

The branch locus $\brloc(B)$ is a union of smoothly embedded circles. We call each such circle a \textit{component} of $\brloc(B)$. Equivalently, one can consider the complementary regions of $B$ in $M$. The boundary of these regions consist of smoothly embedded faces meeting along \textit{cusp circles}. Each component of $\brloc(B)$ is the image of such a cusp circle. Each component of $\brloc(B)$ has a canonical coorientation on $B$, which we call the \textit{maw coorientation}, given locally by the direction from the side with more sectors to the side with less sectors. See the arrows in \Cref{fig:branchsurflocal}.

The sectors of $B$ are surfaces with boundary, with \textit{corners} at where the boundary locally switches from lying along one component of the branch locus to another. We define the \textit{index} of a surface with corners $S$ as $\ind(S):=\chi_{\top}(S)-\frac{1}{4}\text{\#corners}$, where $\chi_{\top}(S)$ is the Euler characteristic of the underlying topological surface. This definition of index is additive: if a surface with corners $S$ is divided by a collection of curves and arcs into surfaces with corners $S_1,...,S_k$, then $\ind(S)=\sum \ind(S_i)$. We will also call the complementary regions of the corners in $\partial S$ the \textit{sides} of $S$.

If every component of $\brloc(B)$ contains at least one triple point, then one can define a cellular structure on $\brloc(B)$ by declaring the 0-cells to be the triple points, and the 1-cells to be the complementary regions of the triple points in $\brloc(B)$. Furthermore, if each sector is topologically a disc, then we can define a cellular structure on $B$ by declaring the 2-cells to be the sectors. In this scenario, we can define the \textit{dual ideal triangulation} to $B$: Let $N$ be the space obtained by attaching cones over each component of $\partial (M \cut B)$ onto $B$. Construct a triangulation $\Delta'$ of $N$ by first placing a vertex at each cone point. Then for each sector of $B$, pick a point in the interior of the sector and cone it off, i.e. join it to the cone points of the two cones on either side of the sector along straight paths, to form edges of $\Delta'$. Then for each 1-cell in $\brloc(B)$, pick a point in its interior and join it to the points we chose in the interior of the 3 sectors the edge is adjacent to, along disjoint paths in those sectors, then cone off these paths to form faces of $\Delta'$. Finally, define the complementary regions of the 2-complex we constructed so far in $N$ to be the tetrahedra. This gives us a triangulation $\Delta'$ of $N$. Now delete all the cone points to get an ideal triangulation $\Delta$ of a regular neighborhood of $B$ in $M$. 
\end{defn}

\begin{defn} \label{defn:cusptorus} (\cite{Mos96})
Consider a solid torus $D^2 \times S^1$. Let $l$ be a nonempty collection of parallel simple closed curves on its boundary which are not parallel to the meridian. Let $p>0$ be the geometric intersection number between the meridian and $l$. Then the 3-manifold obtained by placing cusp circles along $l$ is called a \textit{$p$-cusped solid torus}, or \textit{cusped solid torus} for short.

Similarly, consider a solid torus with its core drilled out, $S^1 \times [0,\infty) \times S^1$. Let $l$ be a nonempty collection of parallel simple closed curves on its boundary. Then the 3-manifold obtained by placing cusp circles along $l$ is called a \textit{cusped torus shell}.
\end{defn}

\begin{defn} \label{defn:unstablebranchsurf}
Let $\Delta$ be a veering triangulation of a 3-manifold $M$. For each tetrahedron of $\Delta$, define a branched surface inside by placing a quadrilateral with vertices on the top and bottom edges and the two side edges of the same color as the top edge, then adding a triangular sector for each side edge of the opposite color to the top edge, as in \Cref{fig:branchsurf} left. These branched surfaces in each tetrahedron can be arranged to match up across faces, thus glue up to a branched surface in $M$, which we call the \textit{unstable branched surface} $B$.
\end{defn}

We record some simple yet important properties of the unstable branched surface.

\begin{prop} \label{prop:branchsurf}
Let $\Delta$ be a veering triangulation of a 3-manifold $M$ and let $B$ be its unstable branched surface.
\begin{enumerate}[label=(\roman*)]
    \item Each sector of $B$ is a disc with $4$ corners.
    \item Each component of $M \cut B$ is a cusped torus shell.
    \item The components of $\brloc(B)$ can be oriented in a way such that at each triple point, the orientation of each component induces the maw coorientation on the other component.
    \item Consider $B$ as a cell complex as in \Cref{defn:branchsurf}. Then $\Delta$ is the dual ideal triangulation to $B$.
\end{enumerate}
\end{prop}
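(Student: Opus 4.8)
The plan is to derive all four items from a single local computation together with some global bookkeeping. By \Cref{defn:unstablebranchsurf} and \Cref{fig:branchsurf}, inside each tetrahedron of $\Delta$ the surface $B$ is a union of one quadrilateral sector and two triangular sectors, and the two arcs of attachment of the triangles cross exactly once in the interior of the quadrilateral, producing exactly one triple point per tetrahedron. So up to the symmetries of a veering tetrahedron there is only one local model to inspect, and the content of the proposition lies in seeing how copies of this model fit together along the faces and ideal edges of $\Delta$. Items (i) and (iv) are essentially a finite inspection and an unwinding of definitions, (iii) is a consistency check, and (ii) is the one genuinely global statement.

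For (i), I would check directly that across each face of $\Delta$ the restrictions of $B$ coming from the two incident tetrahedra agree as train tracks on that face --- this is exactly what makes the gluing in \Cref{defn:unstablebranchsurf} well defined --- and that under this identification the triangular flaps from the two sides are matched with one another. Tracing which elementary pieces get glued, one finds that every sector of the glued-up $B$ is again a quadrilateral (a central quadrilateral piece together with a piece of a flap along each of its sides), hence a disc. For (iv), by (i) every sector is a disc, and by the observation above every component of the branch locus carries a triple point, so the cell structure on $B$ and hence the dual ideal triangulation of \Cref{defn:branchsurf} is defined. I would then match the cells directly: by (ii) the components of $\partial(M \backslash\backslash B)$ are in bijection with the cusps of $M$, hence with the ideal vertices of $\Delta$, so the cone points of the auxiliary space $N$ are the vertices of $\Delta$; each sector of $B$ is transverse to a unique edge of $\Delta$, giving the bijection between sectors and edges; each $1$-cell of the branch locus meets three sectors and corresponds to the face of $\Delta$ spanned by the three dual edges; and each tetrahedron of $\Delta$ contains exactly one copy of the local model and supplies exactly one tetrahedron of the dual triangulation. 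Checking inside one tetrahedron that the coning recipe of \Cref{defn:branchsurf} reproduces that tetrahedron together with its face identifications then gives $\Delta$ back on the nose.

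For (iii), I would orient the components of the branch locus using the coorientations supplied by the transverse taut structure: each component admits a coherent orientation such that, at each triple point, its direction together with the orientation of $M$ yields the maw coorientation of the component it crosses, and once this is fixed at one triple point it is forced elsewhere along that component. One then verifies in the local model that the two branch arcs passing through a triple point satisfy the required relation --- here one uses that at a $0$-edge exactly one of the two adjacent faces is cooriented inward, which pins down the directions of the two flap-attachment arcs --- and the consistency of these orientations across tetrahedra is inherited from the consistency of the transverse taut structure. Since all triple points are locally isomorphic, this is again a single finite check.

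The main obstacle is (ii), as it is the only claim that concerns a whole complementary region rather than the local model. The plan there is: first show that $M \backslash\backslash B$ deformation retracts onto a neighbourhood of the ideal vertices of $\Delta$, equivalently that $B$ fills $M$ away from the cusps, which already yields a bijection between components of $M \backslash\backslash B$ and cusps of $M$. Then, near a cusp $v$, the cross-sectional torus $T_v$ is tiled by the vertex triangles of the tetrahedra of $\Delta$, and $B$ meets a collar of $T_v$ in $\tau_v \times [0,1]$ for a train track $\tau_v \subset T_v$; the crux is to identify $\tau_v$ from the local model (it is the ``unstable'' train track read off the red/blue ladder structure on $T_v$) and to see that its complement in $T_v$, together with the collar and the remaining part of $M \setminus N(B)$, assembles into a copy of $S^1 \times [0,\infty) \times S^1$ with cusp circles along a boundary curve, i.e.\ a cusped torus shell in the sense of \Cref{defn:cusptorus}. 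I expect this cusp-cross-section analysis to be the technical heart; once it is in place, (i), (iii) and (iv) follow as indicated.
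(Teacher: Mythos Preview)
Your plan is sound and matches the paper's treatment, which in fact does not give a self-contained argument at all: it simply points to \cite[Section~6]{SS19} for (i) and (iv), to \cite[Proposition~2.9]{AT22} for (ii), and gives a one-line recipe for (iii). Your outline is exactly the kind of local-model-plus-gluing argument one finds by following those references, and you correctly identify (ii) as the only part with real global content.

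One small sharpening for (iii): the paper's recipe is more concrete than yours. Rather than invoking the transverse taut structure abstractly, just orient each branch-locus arc inside a tetrahedron so that it runs \emph{from the top faces to the bottom faces} (with respect to the face coorientations). This is well defined because each attachment arc of a triangular flap joins a top face to a bottom face, and it visibly satisfies the maw condition at the unique triple point in that tetrahedron; consistency across faces is then automatic because the top/bottom labels agree on shared faces. This is the same orientation you are describing, but phrased this way there is nothing left to check beyond a glance at the local picture in \Cref{fig:branchsurf}.
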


\begin{proof}
All of these are straightforward, but we will write down some references for the interested reader. For (i), see \cite[Section 6]{SS19}. For (ii), see \cite[Proposition 2.9]{AT22}. For (iii), orient the components within each tetrahedron to go from the top faces to the bottom faces. For (iv), see \cite[Section 6]{SS19} again.
\end{proof}

From now on, we will implicitly orient the components of $\brloc(B)$ as in (iii) above. 

\begin{defn}
Each component of $M \cut B$ contains an end of $M$, by \Cref{prop:branchsurf} (ii). We call the cusp circles on a component of $M \cut B$ the ladderpole curves on the corresponding end of $M$. We call the collection of all ladderpole curves on an end of $M$ the \textit{ladderpole class} on that end.
\end{defn}

\begin{figure} 
    \centering
    \resizebox{!}{3.6cm}{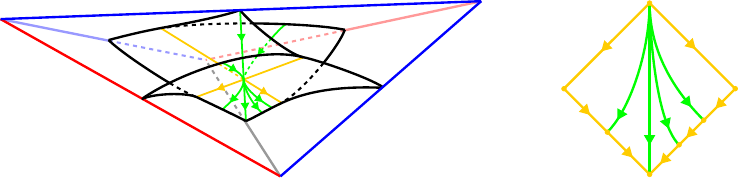}
    \caption{Left: The portion of the unstable branched surface and the flow graph within each veering tetrahedron. Right: The portion of the flow graph on each sector of the unstable branched surface.}
    \label{fig:branchsurf}
\end{figure}

\begin{defn}[Landry-Minsky-Taylor {\cite{LMT20}}] \label{defn:flowgraph}
Let $\Delta$ be a veering triangulation of a 3-manifold $M$. Define the \textit{flow graph} $\Phi$ to be a directed graph with the set of vertices equal to the set of edges of $\Delta$, and adding 3 edges for each tetrahedron, going from the top edge and the two side edges of opposite color to the top edge into the bottom edge.

$\Phi$ can be naturally embedded in the unstable branched surface $B$, hence in $M$, by placing each vertex at the top corner of the sector of $B$ its corresponding edge of $\Delta$ meets, and placing the edges that exit that vertex within that sector of $B$. See \Cref{fig:branchsurf} right. Note that the tangent planes to $B$ determine a framing of the edges of $\Phi$ in $M$.

Define a \textit{planar ordering} of a set to be an equivalence class of linear orderings up to complete reversal. (The motivation of this terminology comes from the fact that one can rotate a line by $\pi$ in a plane, reversing the linear ordering of a set of elements on it.) The embedding of $\Phi$ in $B$ also determines planar orderings on the sets of incoming and outgoing edges at each vertex of $\Phi$.

We will sometimes abuse notation and include the embedding of $\Phi$ in $M$, the framing of its edges in $M$, and the planar orderings of the incoming and outgoing edges at each vertex as part of the data of $\Phi$.
\end{defn}

\begin{defn} \label{defn:redflowgraph}
A proper subgraph $G'$ of a directed graph $G$ is an \textit{infinitesimal component} if there are no edges from vertices in $G'$ to vertices outside of $G'$.

The \textit{reduced flow graph} $\Phi_{\red}$ is the maximal subgraph of the flow graph $\Phi$ that has no infinitesimal components.
\end{defn}

\begin{prop} \label{prop:redflowgraph}
The infinitesimal components of $\Phi$ consist of disjoint cycles, and $\Phi_{\red}$ can be obtained by deleting these cycles along with the edges that enter them.
\end{prop}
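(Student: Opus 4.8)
The plan is to analyze the local structure of the flow graph $\Phi$ coming from the combinatorics of a veering triangulation, and deduce a strong constraint on which vertices can lie in an infinitesimal component. Recall that by \Cref{defn:flowgraph}, the vertices of $\Phi$ are the edges of $\Delta$, and the out-edges of a vertex $e$ correspond to the tetrahedra in which $e$ is the top edge or a side edge of color opposite to the top edge; dually the in-edges at $e$ correspond to tetrahedra in which $e$ is the bottom edge or a side edge of color opposite to the top. The key structural input I would extract is the following: every edge of $\Delta$ has at least one out-edge in $\Phi$ (it is the top edge of some tetrahedron, or equivalently by the taut/veering combinatorics around it there is some tetrahedron in which it plays the role of the top or the appropriate side edge), and moreover, by tracking colors, the out-edges of a vertex always include an edge landing on a strictly ``lower'' edge in an appropriate sense. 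Concretely, using the veering coloring and the fact (see \Cref{fig:veertet}) that in each tetrahedron the top edge has two side edges of its own color and the bottom edge two side edges of its own color, one shows that within any tetrahedron the two side edges feeding into the bottom edge have opposite color to the bottom is the wrong way to say it—rather, I would set up a coloring/parity invariant along edges of $\Phi$: each edge of $\Phi$ either preserves or changes the color of its endpoints in a controlled pattern, and a cycle in $\Phi$ must be monochromatic or balance these changes.

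First I would show that the infinitesimal components are disjoint unions of cycles. Suppose $G'$ is an infinitesimal component, i.e. a proper subgraph with no edges leaving it. Every vertex of $G'$ has at least one out-edge in $\Phi$ (each edge of $\Delta$ is the bottom edge of at least one tetrahedron hence also the top or opposite-colored side edge of at least one tetrahedron — I would make this precise from the taut angle structure around an edge, where the $2\pi$ angle sum forces at least one tetrahedron to have the edge as a $\pi$-edge on top), and that out-edge must stay inside $G'$; so every vertex of $G'$ has out-degree $\geq 1$ within $G'$. By the maximality in \Cref{defn:redflowgraph}, $\Phi_{\red}$ is obtained by repeatedly deleting vertices with no out-edge into the current subgraph together with their in-edges; the union of everything deleted is exactly the union of the infinitesimal components together with the edges flowing into them. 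Now on an infinitesimal component every vertex has out-degree $\geq 1$, so I would argue that in fact each such $G'$ must be a disjoint union of cycles: the point is the reverse inclusion — any vertex lying on a cycle all of whose forward orbit stays in a ``trapped'' set is infinitesimal, and conversely one shows the minimal infinitesimal components are single cycles. The cleanest route is: an infinitesimal component with every out-degree $\geq 1$ contains a cycle; peel that cycle off and observe the remaining graph, if nonempty, still has the no-edges-leaving property relative to the cycle only if those vertices also had all out-edges absorbed, which by a counting/degree argument forces them onto cycles too — here I would invoke the precise out-degree bound, namely that in the flow graph the relevant out-degree is exactly governed by how many tetrahedra see $e$ as top edge plus opposite-colored side edges, and a short combinatorial lemma (this is where I would cite or reprove the structure from \cite{LMT20} if available) pins down that the subgraph of out-degree-exactly-these vertices is a union of disjoint simple cycles.

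The description of $\Phi_{\red}$ as ``$\Phi$ minus these cycles and the edges entering them'' then follows by unwinding the iterative-deletion characterization: deleting the infinitesimal cycles removes precisely the vertices that can never reach a vertex outside an infinitesimal component, and the only edges that disappear are those internal to a removed cycle or pointing into one; no edge among the surviving vertices is removed, and the resulting graph has no infinitesimal component (any new one would have pulled back to one before deletion), so by maximality it equals $\Phi_{\red}$.

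The main obstacle I anticipate is the combinatorial lemma controlling out-degrees and colors — proving that the subgraph of $\Phi$ on which ``all out-edges are absorbed'' is genuinely a disjoint union of \emph{cycles} rather than some more complicated trapped subgraph. This requires using the veering condition in an essential way (the alternating red-blue pattern of \Cref{defn:vt}), not merely the taut structure, and it is here that one must rule out, e.g., a vertex in an infinitesimal component with two out-edges both staying inside. I expect this to follow from the observation that along any directed edge of $\Phi$ a natural monovariant (something like the color of the edge of $\Delta$, or a ``ladderpole coordinate'' read from \Cref{prop:branchsurf}(iii)) changes in a constrained way, forcing the trapped locus to be one-dimensional; I would present this as a lemma proved by inspecting \Cref{fig:branchsurf} left, tetrahedron by tetrahedron.
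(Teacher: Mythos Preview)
Your proposal has a genuine gap at exactly the step you yourself flag as the ``main obstacle.'' The substance of the proposition is that every vertex lying in an infinitesimal component has out-degree exactly $1$ in the full flow graph $\Phi$, not merely out-degree $\geq 1$: a vertex with two out-edges both trapped inside the component would force the component to contain branching and hence fail to be a disjoint union of simple cycles. You acknowledge this but offer only a gesture toward an unspecified ``monovariant'' or ``color/parity invariant.'' Many vertices of $\Phi$ do have out-degree $\geq 2$ (an edge of $\Delta$ can appear as an opposite-colored side edge in several tetrahedra in addition to being the top edge of one), so the entire content lies in ruling such vertices out of any forward-closed proper subset, and this genuinely uses the veering combinatorics in a way your sketch never makes precise. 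For comparison, the paper does not supply a self-contained argument either: its proof is a citation to \cite[Section~3]{AT22}, where the result is obtained by analyzing specific substructures of the veering triangulation called \emph{walls}; the monovariant you are reaching for does exist, but producing it requires that structural analysis rather than a local inspection of \Cref{fig:branchsurf}.

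There is also a secondary confusion. Your proposed characterization of $\Phi_{\red}$ via ``repeatedly deleting vertices with no out-edge into the current subgraph'' is vacuous here: every vertex of $\Phi$ already has out-degree $\geq 1$ (each edge of $\Delta$ is the top edge of exactly one tetrahedron), so there are no sinks to delete at any stage. Vertices in an infinitesimal component are not sinks; they have out-edges, all of which land back in the component. Once the first clause (the components are disjoint cycles) is established, the description of $\Phi_{\red}$ in the second clause is straightforward, so this part of your sketch is not where the real difficulty lies.
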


\begin{proof}
This is shown in \cite[Section 3]{AT22}. In fact, in that paper we show something stronger: the disjoint cycles must lie in special subsets of the veering triangulation called walls. However, we will not need this additional fact in this paper.
\end{proof}

$\Phi_{\red}$ inherits from $\Phi$ an embedding in $M$, a framing of its edges in $M$, and planar orderings of the incoming and outgoing edges at each vertex. Again, we will sometimes abuse notation and include these as part of the data of $\Phi_{\red}$.

\begin{rmk} \label{rmk:conventions}
Our conventions in defining the unstable branched surface and flow graph is consistent with that in \cite{AT22}, but might be different from that of other authors. We provide here a dictionary between our convention and two other sets of conventions that we know of:

In work of Schleimer and Segerman, what we call the unstable branched surface is called the upper branched surface (in dual position).

In work of Landry, Minsky, and Taylor, what we call the unstable branched surface is called the stable branched surface, and the edges of the flow graph are oriented in the opposite direction.

\end{rmk}

\subsection{Pseudo-Anosov flows} \label{subsec:pAflow}

We recall the definition of a pseudo-Anosov flow.

\begin{defn} \label{defn:phorbit}
Consider the map $\begin{bmatrix} \lambda^{-1} & 0\\ 0 & \lambda \end{bmatrix}: \mathbb{R}^2 \to \mathbb{R}^2$, for $\lambda>1$. This preserves the foliations of $\mathbb{R}^2$ by horizontal and vertical lines respectively. Let $\phi_{n,0,\lambda}:\mathbb{R}^2 \to \mathbb{R}^2$ be the lift of this map over $z \mapsto z^{\frac{n}{2}}$ that preserves the lift of the quadrants. (When $n$ is odd, one has to choose a branch of $z \mapsto z^{\frac{n}{2}}$ but it is easy to see that the result is independent of the choice.) Let $\phi_{n,k, \lambda}: \mathbb{R}^2 \to \mathbb{R}^2$ be the composition of $\phi_{n,0,\lambda}$ and rotation by $\frac{2\pi k}{n}$ anticlockwise. Meanwhile, let $l^s, l^u$ be the singular foliations of $\mathbb{R}^2$ obtained by pulling back the foliations by horizontal and vertical lines under $z \mapsto z^{\frac{n}{2}}$, respectively. These are preserved by $\phi_{n,k,\lambda}$. Let $\Phi_{n,k,\lambda}$ be the mapping torus of $\phi_{n,k, \lambda}$, let $\Lambda^s, \Lambda^u$ be the suspensions of $l^s, l^u$ respectively, and consider the suspension flow on $\Phi_{n,k, \lambda}$. Call the suspension of the origin the \textit{pseudo-hyperbolic orbit} of $\Phi_{n,k,\lambda}$.
\end{defn}

\begin{defn} \label{defn:pAflow}
A \textit{pseudo-Anosov flow} on a closed 3-manifold $N$ is a $C^1$-flow $\phi_t$ satisfying:
\begin{itemize}
    \item There is a finite collection of closed orbits $\{\gamma_1, ..., \gamma_s \}$, called the \textit{singular orbits}, such that $\phi_t$ is smooth away from the singular orbits.
    \item There is a path metric $d$ on $N$, which is induced by a Riemannian metric $g$ away from the singular orbits.
    \item Away from the singular orbits, there is a splitting of the tangent bundle into three $\phi_t$-invariant line bundles $TM=E^s \oplus E^u \oplus T\phi_t$, such that $$|d\phi_t(v)| < C \lambda^{-t} |v|$$ for every $v \in E^s, t>0$, and $$|d\phi_t(v)| < C \lambda^t |v|$$ for every $v \in E^u, t<0$, for some $C, \lambda>1$.
    \item Each singular orbit $\gamma_i$ has a neighborhood $N_i$ and a map $f_i$ sending $N_i$ to a neighborhood of the pseudo-hyperbolic orbit in $\Phi_{n_i, k_i, \lambda}$, for some $n_i \geq 3$, such that $f_i$ is bi-Lipschitz on $N_i$ and smooth away from $\gamma_i$, preserves the orbits, and sends $E^s, E^u$ to line bundles tangent to $\Lambda^s, \Lambda^u$ respectively. In this case, we say that $\gamma_i$ is \textit{$n_i$-pronged}. By extension, we also say that a non-singular orbit is \textit{$2$-pronged}.
\end{itemize}

We call the (possibly singular) foliation which is tangent to $E^s \oplus T\phi_t$ away from the singular orbits and given by the image of $\Lambda^s \subset \Phi_{n_i,k_i,\lambda}$ under $f_i$ near the singular orbits the \textit{stable foliation} $\Lambda^s$. We define the \textit{unstable foliation} $\Lambda^u$ similarly.

A pseudo-Anosov flow without singular orbits is called an \textit{Anosov flow}.
\end{defn}

\begin{defn} \label{defn:perfectfit}
Let $\phi$ be a pseudo-Anosov flow on a closed 3-manifold $N$, and let $\mathcal{C}$ be a collection of closed orbits of $\phi$. Lift these up to a flow $\widetilde{\phi}$ on the universal cover $\widetilde{N}$ with a collection of orbits $\widetilde{\mathcal{C}}$ which is the preimage of $\mathcal{C}$.

It is shown in \cite[Proposition 4.2]{FM01} that the orbit space $\mathcal{O}$ of $\widetilde{\phi}$ is homeomorphic to $\mathbb{R}^2$, and the images of $\Lambda^s, \Lambda^u$ are two (possibly singular) 1-dimensional foliations $\mathcal{O}^s, \mathcal{O}^u$, respectively.

A \textit{perfect fit rectangle} is a rectangle-with-one-ideal-vertex properly embedded in $\mathcal{O}$ such that 2 opposite sides of the rectangle lie along leaves of $\mathcal{O}^s$ and the remaining 2 opposite sides lie along leaves of $\mathcal{O}^u$, and such that the restrictions of $\mathcal{O}^s$ and $\mathcal{O}^u$ to the rectangle foliate it as a product, i.e. conjugate to the foliations of $[0,1]^2 \backslash \{(1,1)\}$ by vertical and horizontal lines. See \Cref{fig:perfectfitdefn}. 

\begin{figure}
    \centering
    \fontsize{18pt}{18pt}\selectfont
    \resizebox{!}{3.5cm}{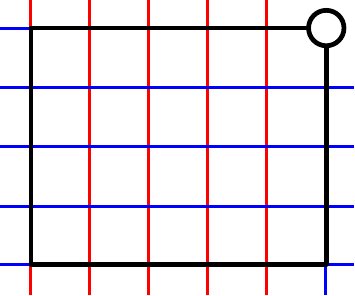}
    \caption{A perfect fit rectangle.}
    \label{fig:perfectfitdefn}
\end{figure}

The collection of orbits $\widetilde{\mathcal{C}}$ can be regarded as a set of points in $\mathcal{O}$. We will say that \textit{$\phi$ has no perfect fits relative to $\mathcal{C}$} if there are no perfect fit rectangles in $\mathcal{O}$ disjoint from $\widetilde{\mathcal{C}}$.
\end{defn}

We recall some definitions in the study of pseudo-Anosov flows.

\begin{defn} \label{defn:Markovpartition}
Given a pseudo-Anosov flow on a closed 3-manifold $N$, a \textit{flow box} is a set of the form $I_s \times I_u \times [0,1]_t \subset N$, where $I_s, I_u$ are intervals, such that:
\begin{itemize}
    \item Every $\{s\} \times \{u\} \times [0,1]_t$ lies along a flow line, with $t$ decreasing being the flow direction
    \item Every $I_s \times \{u\} \times [0,1]_t$ lies along a leaf of the stable foliation $\Lambda^s$
    \item Every $\{s\} \times I_u \times [0,1]_t$ lies along a leaf of the unstable foliation $\Lambda^u$
\end{itemize}

A \textit{Markov partition} is a collection of flow boxes $\{ I^{(i)}_s \times I^{(i)}_u \times [0,1]_t \}_i$ covering $N$ with disjoint interiors, such that 
$$(I^{(i)}_s \times I^{(i)}_u \times \{1\}) \cap (I^{(j)}_s \times I^{(j)}_u \times \{0\}) = \bigcup_k J^{(ij,k)}_s \times I^{(i)}_u \times \{1\} = \bigcup_k I^{(j)}_s \times J^{(ji,k)}_u \times \{0\}$$
for some finite collection of subintervals $J^{(ij,k)}_s \subset I^{(i)}_s$ and $J^{(ji,k)}_u \subset I^{(j)}_u$. Intuitively, when flowing downwards, the flow boxes stretch over multiple flow boxes in the unstable direction and contract to only cover a portion of a flow box in the stable direction. 

Define a directed graph $G$ by letting the set of vertices be the flow boxes, and putting an edge from $(I^{(j)}_s \times I^{(j)}_u \times [0,1]_t)$ to $(I^{(i)}_s \times I^{(i)}_u \times [0,1]_t)$ for every $J^{(ij,k)}_s$. 

Notice that $G$ has a natural embedding in $N$ by placing the vertices in the interior of the corresponding flow box and placing the edges through the corresponding intersections $J^{(ij,k)}_s \times I^{(i)}_u \times \{1\}$. The product structure of the flow boxes determines a framing on the edges in $N$. Also, the sets of incoming and outgoing edges at each vertex of $G$ have natural planar orderings given by the positions of $J^{(ij,k)}_{s/u}$ in $I^{(i)}_{s/u}$.

$G$ together with the information of its embedding in $N$ is said to \textit{encode the Markov partition}. Sometimes we will abuse notation and consider the framing of the edges of $G$ in $N$ and planar orderings of the incoming and outgoing edges at each vertex as part of the data of $G$.
\end{defn}

Markov partitions allow one to study pseudo-Anosov flows using symbolic dynamics. For example, it is a standard fact that if one has a Markov partition of a pseudo-Anosov flow $\phi$ which is encoded by $G \subset N$, then for every closed loop carried by $G$, there is a closed orbit of $\phi$ homotopic to it; conversely, for every closed orbit of $\phi$, there is a closed loop carried by $G$ homotopic to some multiple of it, see for example \cite[Corollary 5.16]{AT22}.

The additional data of the framing of the edges and planar orderings of the incoming and outgoing edges at each vertex allows one to upgrade this statement from `homotopic' to `isotopic', at least for primitive loops and orbits, since one can now decide the relative positions of a loop carried by $G$ when it passes through edges and vertices multiple times. 

In fact, one can essentially recover a Markov partition from the graph that encodes it along with this additional data: Place a flow box at each vertex of the graph and connect up the corresponding flow boxes along their top and bottom faces for each edge, according to the framing and the planar orderings. The side faces of the union of flow boxes can then be glued up, since the semiflow on each component of these must consist of a closed orbit and orbits spiralling into or out of the closed orbit. For more information, see \cite[Sections 3.1-3.4]{Mos96}.

\begin{defn} \label{defn:orbitequiv}
Two flows $\phi_1$ and $\phi_2$ on a 3-manifold $N$ are said to be \textit{orbit equivalent} if there is a homeomorphism of $N$ taking the flow lines of $\phi_1$ to those of $\phi_2$ in an orientation preserving way (but not necessarily respecting the parametrization of the flow lines).

We will often abuse notation and consider two pseudo-Anosov flows as the same if they are orbit equivalent.
\end{defn}

To state \Cref{thm:vtpAcorr} below, we introduce some shorthand notations. Let $\overline{M}$ be a compact 3-manifold with torus boundary components, and let $M$ be its interior. Let $C$ be the set of boundary components of $\overline{M}$, which can be canonically identified with the set of ends of $M$. Given a collection of curves on each boundary component of $\overline{M}$, $s=(s_i)_{i \in C}$, we write $M(s)$ for the closed 3-manifold obtained by Dehn filling $\overline{M}$ along $s_i$. Also, given two collections of (multi-)curves on each boundary component of $\overline{M}$, $s=(s_i)_{i \in C}$ and $t=(t_i)_{i \in C}$, we write $| \langle s, t \rangle | \geq k$ to mean that the geometric intersection number between $s_i$ and $t_i$ is greater or equal to $k$, for every $i \in C$.

\begin{thm} \label{thm:vtpAcorr}
Let $M$ be the interior of a compact 3-manifold with torus boundary components, and let $C$ be the set of ends of $M$. Given a veering triangulation $\Delta$ on $M$, let $l$ be the collection of ladderpole classes on the ends of $M$. Then for every collection of slopes $s$ on the ends of $M$ such that $| \langle s, l \rangle | \geq 2$, $M(s)$ carries a pseudo-Anosov flow $\phi$. Moreover, we have the following properties of $\phi$:
\begin{enumerate}[label=(\alph*)]
    \item There exist closed orbits $c_i$ isotopic to cores of the filling solid tori. Each $c_i$ is $|\langle s_i, l_i \rangle |$-pronged, and $\phi$ has no perfect fits relative to the collection $\{c_i\}$.
    \item The unstable branched surface $B$ carries the unstable lamination of $\phi$ (which is obtained by blowing air into the singular leaves of the unstable foliation).
    \item The reduced flow graph $\Phi_{\red}$ of $\Delta$ encodes a Markov partition of $\phi$. This includes the data of the framing of its edges and the planar orderings of the incoming and outgoing edges at each vertex.
\end{enumerate}
\end{thm}

\begin{proof}
The existence of a pseudo-Anosov flow on $M(s)$ was first proven by Schleimer and Segerman. Their construction appears in \cite{SS23} and will be further elaborated on in \cite{SS22c}. Additional properties (a) and (b) are satisfied by their construction. Meanwhile, an alternate construction has been written up in \cite[Section 5]{AT22}. Additional properties (a)-(c) are satisfied by this construction, see \cite[Theorem 5.1, Proposition 5.13, Proposition 5.15]{AT22} respectively. 
\end{proof}

As remarked in the introduction, Schleimer and Segerman's construction provides a correspondence between veering triangulations and pseudo-Anosov flows, in a suitable sense. 
At the time of writing, the complete proof of this fact is not yet available, but see the introduction of \cite{SS19} for an outline of the proof.

\subsection{Geodesic flows} \label{subsec:geodflow}

We recall some basic facts about geodesic flows. For a more detailed introduction, we refer the reader to \cite[Section 2]{Deh15}. 

\begin{defn} \label{defn:geodflow}
Let $\Sigma$ be a closed 2-dimensional orbifold with a Riemannian metric $g$. Consider its unit tangent bundle $T^1 \Sigma = \{v \in T \Sigma : ||v||_g=1 \}$. The \textit{geodesic flow} $\phi_t$ on this 3-manifold is defined by $\phi_t(\gamma'(0))=\gamma'(t)$ for every unit speed geodesic $\gamma$.

Let $c$ be a collection of oriented geodesics on $\Sigma$. Suppose elements of $c$ are parametrized with unit speed. The \textit{lift} of $c$ in $T^1 \Sigma$ is defined to be $\overset{\rightarrow}{c}:=\{c'_i(t):\text{$c_i$ is an element of $c$} \}$. This is a collection of orbits of the geodesic flow.

Similarly, let $c$ be a collection of unoriented geodesics on $\Sigma$. Suppose elements of $c$ are parametrized with unit speed in some orientation. Then the \textit{full lift} of $c$ in $T^1 \Sigma$ is defined to be $\overset{\leftrightarrow}{c}:=\{\pm c_i'(t):\text{$c_i$ is an element of $c$} \}$. This is again a collection of orbits of the geodesic flow. 
\end{defn}

When $g$ has negative curvature everywhere, it is a classical fact that the geodesic flow $\phi$ is an Anosov flow. This is first proven in \cite{Hop39}, but see the appendix in \cite{Bal95} for a more modern exposition. It is also well known that Anosov flows are structurally stable, i.e. the orbit equivalence class is preserved under any $C^1$ perturbation to the underlying vector field, see \cite{Rob74} for a proof. Together with the fact that the space of negatively curved Riemannian metrics on a 2-dimensional orbifold with negative Euler characteristic is nonempty and connected (\cite{Ham88}), this means that we can talk about \textit{the geodesic flow} on the unit tangent bundle of an orbifold $S$ with negative Euler characteristic when we mean the geodesic flow under some negatively curved Riemannian metric. Also, given a collection of homotopically nontrivial and mutually nonparallel (un)oriented curves $c$, we can talk about \textit{the (full) lift} of $c$ when we mean the (full) lift of the geodesic representative of $c$ under some negatively curved Riemannian metric.



One reason why geodesic flows are important to the study of pseudo-Anosov flows is that they account for virtually all pseudo-Anosov flows on Seifert fibered 3-manifolds. More precisely,

\begin{thm} \label{thm:sfspageod}
Let $N$ be a Seifert fibered space carrying a pseudo-Anosov flow $\phi$. Then $N$ is a finite cover of the unit tangent bundle of some hyperbolic orbifold $T^1 \Sigma$, and $\phi$ is orbit equivalent to the lift of the geodesic flow on $T^1 \Sigma$. 
\end{thm}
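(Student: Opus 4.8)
The plan is to reduce to the case where the base $2$-orbifold of the Seifert fibration is hyperbolic, and then to quote the classification of pseudo-Anosov flows on Seifert fibered spaces. Realize the Seifert structure on $N$ as an orbifold circle bundle over a closed $2$-orbifold $\Sigma$ (passing to a finite cover, harmless for the conclusion, so that $N$ and $\Sigma$ are orientable and the fibration is honest); then $\pi_1(N)$ sits in a central extension
\[
1 \longrightarrow \ZZ \longrightarrow \pi_1(N) \longrightarrow \pi_1^{\mathrm{orb}}(\Sigma) \longrightarrow 1
\]
with the central $\ZZ$ generated by the class $h$ of a regular fiber. Since $\phi$ is pseudo-Anosov, \cite[Proposition 4.2]{FM01} gives that the orbit space $\mathcal{O}$ of the lifted flow $\widetilde{\phi}$ on $\widetilde{N}$ is homeomorphic to $\RR^2$, so $\widetilde{N}\cong\RR^3$, $N$ is aspherical, and $\pi_1(N)$ is infinite. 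If $\chi^{\mathrm{orb}}(\Sigma)\geq 0$ then, the base being spherical or Euclidean, $\pi_1(N)$ would be virtually nilpotent, which is impossible for a closed $3$-manifold carrying a pseudo-Anosov flow by the Plante--Thurston theorem in its extension to the pseudo-Anosov setting. Hence $\Sigma$ is hyperbolic and $\pi_1^{\mathrm{orb}}(\Sigma)$ is a cocompact Fuchsian group.

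The crux is to show that $h$ acts freely on $\mathcal{O}$ --- equivalently, that no periodic orbit of $\phi$ is freely homotopic to a nontrivial power of the regular fiber. If such an orbit existed, it would lie in the free homotopy class of a power of the central element $h$, and Fenley's analysis of free homotopy classes of periodic orbits of pseudo-Anosov flows (in terms of lozenges, chains of lozenges, and the centralizers they produce) shows this is impossible on a closed Seifert fibered $3$-manifold with hyperbolic base. This, together with the analysis of how $h$ acts once it is known to be fixed-point-free, is the substance of the Barbot--Fenley classification of pseudo-Anosov flows on Seifert fibered spaces, and is where essentially all the difficulty lies; in the write-up I would invoke that classification rather than reprove it.

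Granting that $h$ acts freely, the quotient $\mathcal{O}/\langle h\rangle$ is an open annulus equipped with two one-dimensional foliations descending from $\mathcal{O}^s,\mathcal{O}^u$ and with an action of $\pi_1^{\mathrm{orb}}(\Sigma)$; for the geodesic flow on $T^1\mathbb{H}^2$ the analogous object is the annulus of oriented geodesics of $\mathbb{H}^2$, with its two ``common endpoint'' foliations and the standard Fuchsian action. To conclude one produces a $\pi_1^{\mathrm{orb}}(\Sigma)$-equivariant homeomorphism between the two pictures matching the foliations: following Ghys's treatment of Anosov flows on circle bundles and its extension by Barbot and Fenley to the pseudo-Anosov and orbifold setting, one shows $\phi$ is skew $\RR$-covered, extracts an action of $\pi_1^{\mathrm{orb}}(\Sigma)$ on the universal circle, recognizes it as conjugate to a Fuchsian action by the relevant rigidity for group actions on the circle, and lifts the conjugacy to an orbit equivalence with a geodesic flow --- after one final finite cover realizing $\Sigma$ with a hyperbolic metric. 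The main obstacle is exactly this last passage from the dynamical data on $\mathcal{O}$ to an honest orbit equivalence; as it is the heart of the cited classification, the proof here amounts to assembling the pieces.
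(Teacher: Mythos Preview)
Your proposal is essentially correct and follows the same broad strategy as the paper: both reduce to the Barbot--Fenley classification of pseudo-Anosov flows on Seifert fibered spaces and invoke circle rigidity to identify the flow with a geodesic flow. The differences are organizational rather than substantive.

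The paper works on the stable leaf space $\mathcal{H}^s$ rather than the orbit space $\mathcal{O}$: it cites \cite{BF13} to show $\mathcal{H}^s \cong \mathbb{R}$ (so $\phi$ is $\mathbb{R}$-covered, hence in particular Anosov), then \cite{Bar95} to upgrade to skew $\mathbb{R}$-covered, then builds the step map $\tau_s$ so that $\mathcal{H}^s/\tau_s$ is a circle on which $\pi_1^{\mathrm{orb}}(S)$ acts as a convergence group, and explicitly names the convergence group theorem (\cite{Gab92}, \cite{CJ94}) to conjugate to a Fuchsian representation. The final orbit equivalence is obtained from \cite[Theorem 4.6]{Bar95}. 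Your outline via $\mathcal{O}/\langle h\rangle$ as an annulus, together with ``rigidity for group actions on the circle,'' amounts to the same content packaged differently; the paper's version has the advantage of pinpointing precisely which theorems are being quoted at each step.

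One small caution: your opening move of passing to a finite cover ``harmless for the conclusion'' is a bit glib. The statement asserts that $N$ itself finitely covers some $T^1\Sigma$; knowing this for a finite cover $\hat{N}$ of $N$ does not immediately descend without an argument that the deck group of $\hat{N}\to N$ is compatible with the identification. The paper avoids this by working directly with $N$ throughout.
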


\begin{proof}
This is essentially proved in \cite{BF13} and \cite{Bar95}. We provide a sketch of the argument found across the two papers.

Let $S$ be the base orbifold of $N$ and let $h \in \pi_1 N$ be the class of a regular fiber. The proof of \cite[Theorem 4.1]{BF13} starts by analyzing the action of $h$ on the leaf space $\mathcal{H}^s$ of the stable foliation on $\widetilde{N}$, showing that it is homeomorphic to $\mathbb{R}$. This means that $\phi$ is an $\mathbb{R}$-covered Anosov flow. In fact, by \cite[Theorem 2.8]{Bar95}, $\phi$ must be a skew $\mathbb{R}$-covered Anosov flow or else $N$ would have $\mathrm{Solv}$ geometry.

Now in general from a skew $\mathbb{R}$-covered Anosov flow, one can construct a step map $\tau_s:\mathcal{H}^s \to \mathcal{H}^s$ such that $\mathcal{H}^s/\tau_s$ is a circle, and such that the action of $\pi_1 N$ on $\mathcal{H}^s$ descends to an action on this circle. In the proof of \cite[Theorem 4.1]{BF13}, by further analyzing the action of $\pi_1 N$ on $\mathcal{H}^s$, it is shown that $h$ acts trivially on $\mathcal{H}^s/\tau_s$, and the quotiented action of $\pi_1 N/ \langle h \rangle = \pi^{orb}_1 S$ on $\mathcal{H}^s/\tau_s$ is a convergence group action. Hence by \cite{Gab92} or \cite{CJ94}, $\pi^{orb}_1 S$ can be conjugated to a Fuchsian group. Let $\rho: \pi^{orb}_1 S \to \mathrm{PSL}_2 \mathbb{R}$ be this Fuchsian representation, which bestows $S$ with a hyperbolic structure $\Sigma$. Also, let $\widetilde{\rho}: \pi_1 N \to \widetilde{\mathrm{PSL}_2 \mathbb{R}}$ be the lift of $\rho$.

Denoting the element of $\widetilde{\mathrm{PSL}_2 \mathbb{R}}$ $x$ to $x+l$ by $sh(l)$ and conjugating such that $\tau_s=sh(1)$, $\widetilde{\rho}(h)$ must be of the form $sh(r)$ for some $r \in \mathbb{Z}$. Since $\rho$ is a Fuschian representation, for $H=\langle \widetilde{\rho}(\pi_1 N), sh(1) \rangle$, $\widetilde{\mathrm{PSL}_2 \mathbb{R}}/H \cong \mathrm{PSL}_2 \mathbb{R}/\rho(\pi^{orb}_1 S) \cong T^1 \Sigma$. Hence $\widetilde{\mathrm{PSL}_2 \mathbb{R}} / \widetilde{\rho}(\pi_1 N) \cong N$ is the $|r|^{th}$ fiberwise cyclic cover of $T^1 \Sigma$. 

To prove the second part of the statement, note that $\widetilde{\mathrm{PSL}_2 \mathbb{R}}/H \cong T^1 \Sigma$ carries the geodesic flow, which is a skew $\mathbb{R}$-covered Anosov flow. Its cover $\widetilde{\mathrm{PSL}_2 \mathbb{R}}/\widetilde{\rho}(\pi_1 N) \cong N$ thus carries the lifted flow, which is also skew $\mathbb{R}$-covered Anosov. It is easy to see that the action of $\pi_1 N$ on the leaf space of the stable foliation of this Anosov flow is exactly $\widetilde{\rho}$. Hence by \cite[Theorem 4.6]{Bar95}, the original flow is orbit-equivalent to this lift of the geodesic flow on $T^1 \Sigma$. 
\end{proof}




We also recall the following classical theorem. This follows from the more general statement of \cite[Theorem 1.3]{CR20}.

\begin{thm} \label{thm:fulllifthyp}
Let $S$ be a 2-dimensional closed orbifold with negative Euler characteristic, and let $c$ be a filling collection of homotopically nontrivial and mutually nonparallel curves. Then $T^1 S \backslash \overset{\leftrightarrow}{c}$ is a hyperbolic 3-manifold.
\end{thm}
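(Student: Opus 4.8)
The plan is to reduce the statement to Thurston's hyperbolization theorem: the interior of a compact orientable 3-manifold with non-empty torus boundary is hyperbolic as soon as the manifold is irreducible, atoroidal, and contains no essential annulus (equivalently, is not a Seifert fibered space and is not one of the finitely many small exceptional cases). So I would aim to establish these properties for $T^1 S \backslash \backslash \overset{\leftrightarrow}{c}$. The cleanest route is to invoke \cite[Theorem 1.3]{CR20}, which yields hyperbolicity of such complements under a filling-type hypothesis; below I describe how I would set up the application, and I also sketch a second, essentially self-contained route using the machinery of this paper.

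First I would reduce to the case that $S$ is a surface. By Selberg's lemma $\pi_1^{\mathrm{orb}}(S)$ has a torsion-free subgroup of finite index, giving a finite manifold cover $p \colon \Sigma \to S$ with $\Sigma$ a closed hyperbolic surface. Then $T^1\Sigma \to T^1 S$ is a finite cover of 3-manifolds pulling back $\overset{\leftrightarrow}{c}$ to the full lift of the geodesic representatives of $p^{-1}(c)$, and the latter form a filling collection of pairwise non-isotopic closed geodesics on $\Sigma$ (filling is preserved under finite covers, and isotopic pull-back components have the same geodesic representative, hence the same lifted orbit). Since irreducibility, atoroidality, anannularity, and non-Seifert-fiberedness all transfer between a 3-manifold and its finite covers by standard covering-space arguments, it suffices to prove the theorem when $S$ is a hyperbolic surface.

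In that case, fix a hyperbolic metric on $\Sigma = S$. Since $c$ is filling, \Cref{thm:noperfectfits} shows that the geodesic flow on $T^1\Sigma$ has no perfect fits relative to $\overset{\leftrightarrow}{c}$, which places us squarely in the hypotheses of \cite[Theorem 1.3]{CR20} and gives the conclusion. Alternatively, and this is the route I find most natural here, \Cref{thm:vtpAcorr}(2) then equips $M := T^1\Sigma \backslash \overset{\leftrightarrow}{c}$ with a veering triangulation $\Delta$; and a 3-manifold carrying a veering triangulation is hyperbolic, since such a triangulation supports a strict angle structure (Hodgson--Rubinstein--Segerman--Tillmann), and the standard consequence of a strict angle structure (going back to Casson; see also Lackenby and Futer--Gu\'eritaud) is that the ideally triangulated manifold contains no essential sphere, torus, or annulus, so Thurston's theorem applies. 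One should note there is no circularity here: the construction behind \Cref{thm:vtpAcorr}(2) is built from the dynamics of the orbit space and does not presuppose hyperbolicity of $M$.

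I expect the real content, and the main obstacle if one wanted a hands-on proof rather than a citation, to be the exclusion of essential annuli (and, to a lesser extent, essential tori). Essential tori are manageable: an incompressible torus in the Seifert fibered space $T^1 S$ is isotopic to a vertical one, a union of fibers over a simple closed curve in $S$, since the base orbifold is hyperbolic and the Euler number $\chi(S)$ is nonzero; a vertical torus missing $\overset{\leftrightarrow}{c}$ would then live over an essential simple closed curve in $S \backslash \backslash \bigcup_i c_i$, of which there are none once $c$ is filling. Essential annuli are subtler, since after drilling one must also rule out vertical annuli running between the new cusps along fibers over arcs of $S \backslash \backslash \bigcup_i c_i$, as well as confirm that no non-vertical essential surface has been created (the drilled manifold need not fiber). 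This is exactly the work being done by \cite[Theorem 1.3]{CR20} or, in the second route, by the strict angle structure; irreducibility, by contrast, comes for free from the angle structure (or from the fact that $T^1 S$ is irreducible and $\overset{\leftrightarrow}{c}$ is not contained in a ball). Once essential tori and annuli are excluded, Thurston hyperbolization completes the argument.
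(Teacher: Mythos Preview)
The paper's proof is a one-line citation to \cite[Theorem 1.3]{CR20}, nothing more. Your first route lands on the same citation, so in that sense you have matched the paper. Two small points: the reduction from orbifolds to surfaces is unnecessary, since \cite[Theorem 1.3]{CR20} already covers the orbifold case (this is why the paper calls it ``the more general statement''); and the hypotheses of \cite{CR20} are stated directly in terms of the link in the Seifert fibered space, not in terms of perfect fits, so invoking \Cref{thm:noperfectfits} as a prerequisite for applying \cite{CR20} is a misreading of that reference --- the filling hypothesis on $c$ feeds in directly.

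Your second route is genuinely different from the paper and is worth noting. Using \Cref{thm:noperfectfits} to get no perfect fits, then \Cref{thm:vtpAcorr}(2) to produce a veering triangulation on $T^1\Sigma \backslash \overset{\leftrightarrow}{c}$, then the Hodgson--Rubinstein--Segerman--Tillmann strict angle structure to conclude hyperbolicity, is logically sound: none of those ingredients depend on \Cref{thm:fulllifthyp}, so there is no circularity. What this buys is a proof internal to the veering/pseudo-Anosov framework of the paper, at the cost of trading one black box (\cite{CR20}) for three (Agol--Gu\'eritaud/LMT, HRST, and Casson--Lackenby). The paper's choice to simply cite \cite{CR20} keeps the background section lean and avoids forward reference to the appendix.
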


\subsection{Montesinos links} \label{subsec:montesinos}

We recall some notation and basic facts about Montesinos links. We refer to \cite{BS09} for more detailed explanations.

\begin{defn}
Let $S$ be a 2-sphere with 4 marked points. Label the 4 marked points as NE, NW, SW, SE. Suppose $S$ bounds a 3-ball $B$, fix a projection of $B$ to a disc, so that the 4 marked points are mapped to the NE, NW, SW, SE corners of the disc respectively. For us, a \textit{tangle} will mean the projection of a (tame) embedding of two arcs in $B$ where the endpoints of the arcs lie on the 4 marked points.

For each finite sequence of integers $(a_0,...,a_k)$, define a tangle in the following way. If $k$ is even, start with two disjoint arcs connecting NE with NW and SW with SE, then add $a_k$ half twists around the NW and SW corners, then $-a_{k-1}$ half twists around the NE and NW corners, and so on. If $k$ is odd, start with two disjoint arcs connecting NE with SE and NW with SW, then add $-a_k$ half twists around the NE and NW corners, then $a_{k-1}$ half twists around the NW and SW corners, and so on.

We call such a tangle a \textit{rational tangle}, and associate to it the rational number $a_0 + \frac{1}{a_1+\frac{1}{...}}$. In \Cref{fig:montesinos} top left we illustrate an example of a rational tangle which corresponds to the rational number $2 + \frac{1}{3+\frac{1}{4}}$.

A \textit{Montesinos link} is a knot or link obtained by inserting rational tangles into the empty regions of the knot diagram illustrated in \Cref{fig:montesinos} bottom. If the rational numbers associated to the rational tangles we inserted are $\frac{q_1}{p_1}, ..., \frac{q_n}{p_n}$, we denote the corresponding Montesinos link by $M(\frac{q_1}{p_1},...,\frac{q_n}{p_n})$. We remark that the same knot or link may be representable as a Montesinos link for various different choices of $\frac{q_i}{p_i}$.
\end{defn}

\begin{figure}
    \centering
    \fontsize{14pt}{14pt}\selectfont
    \resizebox{!}{8cm}{
\begingroup%
  \makeatletter%
  \providecommand\color[2][]{%
    \errmessage{(Inkscape) Color is used for the text in Inkscape, but the package 'color.sty' is not loaded}%
    \renewcommand\color[2][]{}%
  }%
  \providecommand\transparent[1]{%
    \errmessage{(Inkscape) Transparency is used (non-zero) for the text in Inkscape, but the package 'transparent.sty' is not loaded}%
    \renewcommand\transparent[1]{}%
  }%
  \providecommand\rotatebox[2]{#2}%
  \newcommand*\fsize{\dimexpr\f@size pt\relax}%
  \newcommand*\lineheight[1]{\fontsize{\fsize}{#1\fsize}\selectfont}%
  \ifx\svgwidth\undefined%
    \setlength{\unitlength}{380.38910602bp}%
    \ifx\svgscale\undefined%
      \relax%
    \else%
      \setlength{\unitlength}{\unitlength * \real{\svgscale}}%
    \fi%
  \else%
    \setlength{\unitlength}{\svgwidth}%
  \fi%
  \global\let\svgwidth\undefined%
  \global\let\svgscale\undefined%
  \makeatother%
  \begin{picture}(1,0.77212015)%
    \lineheight{1}%
    \setlength\tabcolsep{0pt}%
    \put(0.79498788,0.64563881){\color[rgb]{0,0,0}\makebox(0,0)[lt]{\lineheight{1.25}\smash{\begin{tabular}[t]{l}$a_0+\frac{1}{a_1+\frac{1}{a_2}}$\end{tabular}}}}%
    \put(0,0){\includegraphics[width=\unitlength,page=1]{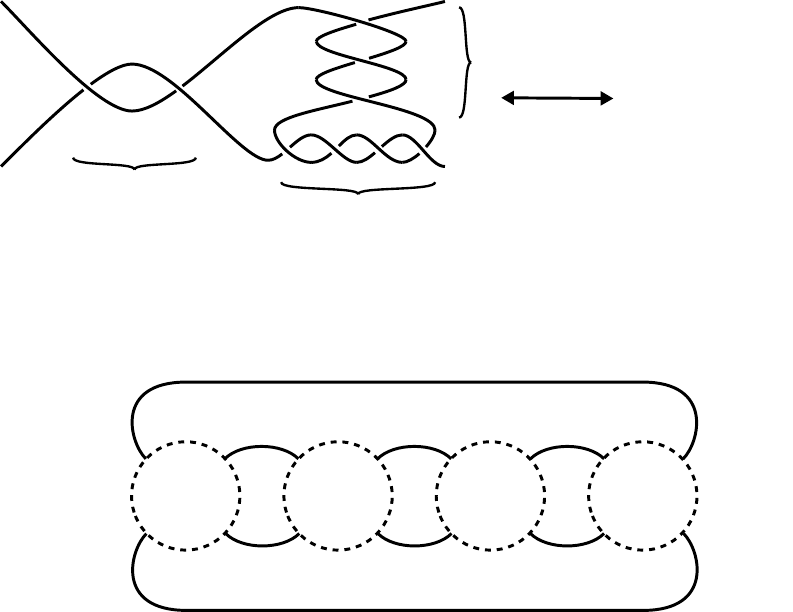}}%
    \put(0.60377337,0.68791883){\color[rgb]{0,0,0}\makebox(0,0)[lt]{\lineheight{1.25}\smash{\begin{tabular}[t]{l}$a_1$\end{tabular}}}}%
    \put(0.43713559,0.49350626){\color[rgb]{0,0,0}\makebox(0,0)[lt]{\lineheight{1.25}\smash{\begin{tabular}[t]{l}$a_2$\end{tabular}}}}%
    \put(0.15089212,0.52413747){\color[rgb]{0,0,0}\makebox(0,0)[lt]{\lineheight{1.25}\smash{\begin{tabular}[t]{l}$a_0$\end{tabular}}}}%
    \put(0,0){\includegraphics[width=\unitlength,page=2]{montesinos.pdf}}%
  \end{picture}%
\endgroup%
}
    \caption{Definition of a Montesinos link.}
    \label{fig:montesinos}
\end{figure}

A key fact about Montesinos links is that they are exactly those knots and links whose double branched cover is a Seifert fibered space. In fact, the double branched cover of $M(\frac{q_1}{p_1},...,\frac{q_n}{p_n})$ is the Seifert fibered space with base orbifold $S^2$ with cone points of index $p_1,...,p_n$ and singular fibers above those cone points having parameters $(p_i,q_i)$. In the sequel, we will denote such an orbifold by $S^2(p_1,..,p_n)$ and such a Seifert fibered space as $(S^2, (p_1,q_1),...,(p_n,q_n))$.

There is a neat way of seeing how this double branched cover works. Take a $n$-gon $Q$ with edges $l_1,...,l_n$ and consider a trivial circle bundle $T$ over $Q$. Choose parametrizations $l_i \times \mathbb{R}/\mathbb{Z}$ of the bundle over $l_i$, in a way such that the second coordinates in the parametrizations shift down by $\frac{q_i}{2p_i}$ going from $l_i \times \mathbb{R}/\mathbb{Z}$ to $l_{i+1} \times \mathbb{R}/\mathbb{Z}$ (here indices should be taken mod $n$). Now this shift can only be well-defined mod $\mathbb{Z}$, so there is still the ambiguity of how the parametrizations fit together when going around $\partial T$. We fix this by requiring that in the universal cover of $T$, if we start at $l_1 \times \{0 \}$ and follow horizontal lines $l_i \times \{t \}$ around $\partial Q \times \{t\}$, we will return to $l_1 \times \{\frac{e}{2}\}$ for $e=\sum \frac{q_i}{p_i}$. 

Now define an involution $\iota_i$ on each $l_i \times \mathbb{R}/\mathbb{Z}$ by reflecting across the horizontal lines $l_i \times \{0 \}$ and $l_i \times \{ \frac{1}{2} \}$. Then $T$ quotiented by the $\iota_i$ on its faces is homeomorphic to $S^3$, and the lines of reflection form the Montesinos link $M(\frac{q_1}{p_1},...,\frac{q_n}{p_n})$. Intuitively, we are folding up each face of $T$, but on the fibers above the vertices of $Q$, the two foldings differ by a shift, so those fibers are folded up in $2p_i$-ply fashion, from the action of a dihedral group. 

From this picture, we can construct the double branched cover of $M(\frac{q_1}{p_1},...,\frac{q_n}{p_n})$ by taking two copies of $T$ and gluing their faces together via $\iota_i$. It can be seen that the result will be the Seifert fibered space $(S^2, (p_1,q_1),...,(p_n,q_n))$. 

In this paper, we will only be interested in the cases when the Seifert fibered space $(S^2, (p_1,q_1),...,(p_n,q_n))$ is the unit tangent bundle over an orbifold. This is the case when $\frac{q_1}{p_1}=\frac{1}{p_1}+1$ and $\frac{q_i}{p_i}=\frac{1}{p_i}-1$ for $i \neq 1$. In this case, the picture above simplifies, and can be illustrated as in \Cref{fig:monteblock}.

\begin{figure}
    \centering
    \resizebox{!}{5.5cm}{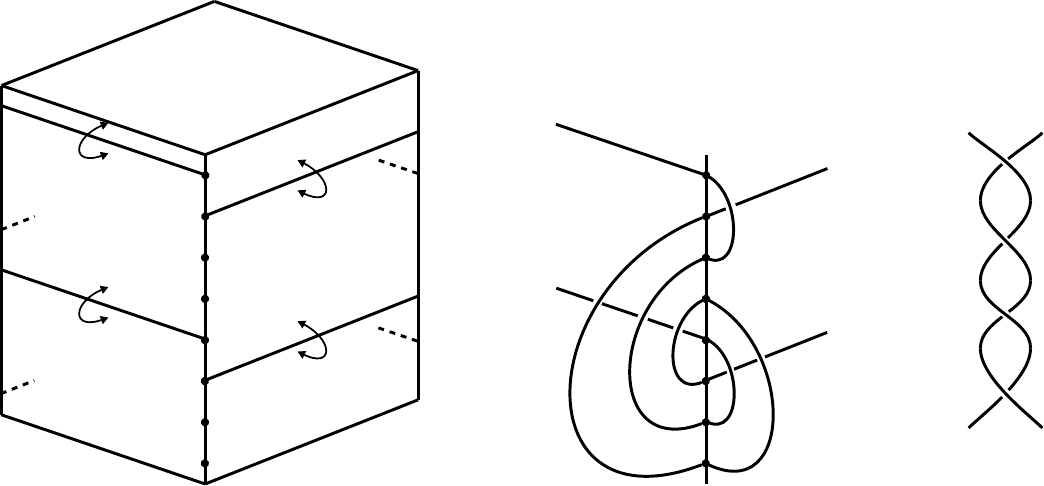}
    \caption{Understanding the double branched cover of $M(\frac{1}{p_1}+1,\frac{1}{p_2}-1,...,\frac{1}{p_n}-1)$ near one of the rational tangles with $p_i=4$.}
    \label{fig:monteblock}
\end{figure}

The reason why we mentioned this perspective is that it allows one to see why these Montesinos links are related to geodesic flows. Give $Q$ a Riemannian metric where the sides $l_i$ are geodesics and the angle between $l_i$ and $l_{i+1}$ is $\frac{\pi}{p_i}$. Then the unit tangent bundle of $Q$ is a trivial circle bundle over $Q$. If we orient the $l_i$ coherently, say, to all go from $l_{i-1}$ to $l_{i+1}$, we can then choose parametrizations of the bundle over $l_i$ using parallel transport, where, say, $l_i \times \{0 \}$ is the lift of $l_i$ and $l_i \times \{ \frac{1}{2} \}$ is the lift of $-l_i$. It is straightforward to see that these parametrizations satisfy the conditions imposed above, and the involutions $\iota_i$ we defined are induced by reflections across $l_i$.

Now if we construct the double branched cover of $M(\frac{1}{p_1}+1,\frac{1}{p_2}-1,...,\frac{1}{p_n}-1)$ by gluing together two copies of $T^1 Q$ as above, we get the unit tangent bundle over the orbifold obtained by doubling $Q$, which is exactly $S^2(p_1,...,p_n)$. The boundary of $Q$ is a geodesic $c$ in this orbifold, and the double branched covering is induced by reflection across $c$.  In particular, the Montesinos link $M(\frac{1}{p_1}+1,\frac{1}{p_2}-1,...,\frac{1}{p_n}-1)$ is the image of $\overset{\leftrightarrow}{c}$.

We record this fact as a proposition.

\begin{prop} \label{prop:montelink}
Let $n \geq 1$, $p_1,...,p_n \geq 2$. Let $c$ be a curve that passes through the cones points of order $p_1,...,p_n$ in $S^2(p_1,...,p_n)$ in that order. Reflection across $c$ induces a double branched cover $T^1S^2(p_1,...,p_n) \to S^3$. The branch locus of the covering in $T^1S^2(p_1,...,p_n)$ is the full lift $\overset{\leftrightarrow}{c}$. The branch locus of the covering in $S^3$ is the Montesinos link $M(\frac{1}{p_1}+1,\frac{1}{p_2}-1,...,\frac{1}{p_n}-1)$.
\end{prop}

\section{Veering branched surfaces} \label{sec:vbs}

In this section, we will introduce the notion of veering branched surfaces. Their definition is modeled after the properties of the unstable branched surface listed in \Cref{prop:branchsurf}, and they end up being essentially equivalent to veering triangulations, hence the name. 

\begin{defn} \label{defn:vbs}
Let $M$ be the interior of a compact 3-manifold with torus boundary components, and let $B$ be a branched surface in $M$. $B$ along with a choice of orientations on the components of its branch locus is \textit{veering} if:
\begin{enumerate}[label=(\roman*)]
    \item Each sector of $B$ is homeomorphic to a disc.
    \item Each component of $M \cut B$ is a cusped solid torus or a cusped torus shell.
    \item At each triple point, the orientation of each component of $\brloc(B)$ induces the maw coorientation on the other component.
\end{enumerate}

We will often abuse notation and consider the orientations on the components of the branch locus as part of the data of $B$. 
\end{defn}

\begin{prop} \label{prop:vbs}
Let $B$ be a veering branched surface in an oriented 3-manifold $M$ whose complementary regions are all cusped torus shells. Then the dual ideal triangulation of $B$ is a veering triangulation $\Delta$ of $M$, and $B$ can be identified with the unstable branched surface for $\Delta$.
\end{prop}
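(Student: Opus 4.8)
The plan is to show that the three conditions in \Cref{defn:vbs}, together with the hypothesis that every complementary region is a cusped torus shell, force the local combinatorics of $B$ to match, sector by sector and triple point by triple point, those of the unstable branched surface of a veering triangulation; the triangulation in question then has to be the dual ideal triangulation $\Delta$. So I would begin by checking that $\Delta$ is even defined, i.e. that the two hypotheses of the dual construction in \Cref{defn:branchsurf} hold: condition (i) is exactly that every sector is a disc, and for the requirement that every branch-locus component contains a triple point I would argue by contradiction — a component with no triple point is an embedded circle $\gamma$ along which $B$ has the product form $\gamma\times(\text{local model})$, and tracing through the maw coorientation forced by (iii) one finds that the complementary region of $B$ on the appropriate side of $\gamma$ is a \emph{compact} cusped solid torus rather than a cusped torus shell, contradicting the hypothesis. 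The same cusped-torus-shell hypothesis also guarantees that $B$ is a spine of $M$ (each complementary region being a neighbourhood of a torus end of $M$), so that $\Delta$ will be an ideal triangulation of $M$ itself rather than merely of a regular neighbourhood of $B$.

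The heart of the argument is the local analysis at a triple point $p$. Let $\alpha,\beta$ be the two components of the (resolved) branch locus crossing at $p$. Condition (iii) says the orientation of $\alpha$ induces the maw coorientation of $\beta$ and vice versa; combined with the branched-surface local model (\Cref{fig:branchsurflocal}, right) and the fact that the maw coorientation always points from the side with more sheets to the side with fewer, this pins down the picture of $B$ near $p$ up to homeomorphism, and it is precisely the picture of the unstable branched surface inside a single veering tetrahedron (\Cref{fig:branchsurf}, left): one central quadrilateral sector, a determined cyclic pattern of the $6$ local sectors and of the $4$ branch-locus rays around $p$. In particular the complementary region of the $2$-skeleton built near $p$ is a genuine ideal tetrahedron $T_p$, so $\Delta$ is an honest ideal triangulation, with tetrahedra $\leftrightarrow$ triple points, faces $\leftrightarrow$ $1$-cells of the branch locus, edges $\leftrightarrow$ sectors, and ideal vertices $\leftrightarrow$ ends of $M$.

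Next I would transport structure from $B$ to $\Delta$. The maw coorientation on each $1$-cell of the branch locus is a coorientation on the corresponding face of $\Delta$; the local model at $T_p$ determines a labelling of its six dihedral angles, with the two $\pi$-angles at the edges dual to the sectors sitting at the "top" and "bottom" corners of the central quadrilateral (and these are opposite), and the other four angles $0$. The two remaining conditions for a transverse taut structure — angle sum $2\pi$ around each edge, and the inward-coorientation condition at each $0$-edge — are local statements around a single sector $s$ of $B$: the triple points on $\partial s$ occur in a definite cyclic order forced by the local models at each of them, and organizing the $T_p$ for $p\in\partial s$ around the edge dual to $s$ produces the standard veering "fan" configuration, which makes both conditions hold. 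For the veering colouring, the local model at each triple point singles out, among the four equatorial edges of $T_p$, a pair to be coloured one colour and the complementary pair the other; I would check that these local $2$-colourings agree on overlaps, so that they assemble to a colouring of the edges of $\Delta$ (alternatively, this colour can be recovered directly from the orientations of the branch-locus arcs bounding the quadrilateral sector dual to the edge), and by construction it satisfies the red–blue–red–blue condition of \Cref{defn:vt} in each $T_p$. Hence $\Delta$ is a veering triangulation.

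Finally, $B$ is the unstable branched surface of $\Delta$: by \Cref{defn:unstablebranchsurf} the latter is glued together from exactly the local pieces of \Cref{fig:branchsurf} (left) inside each tetrahedron, and we have just shown these coincide with the local pieces of $B$ inside each $T_p$; since the gluing across faces is the same on both sides, the two branched surfaces are identified. (Equivalently, \Cref{prop:branchsurf}(iv) shows that "unstable branched surface" and "dual ideal triangulation" are mutually inverse on the relevant classes, and we have exhibited $\Delta$ as the dual of $B$.) I expect the main obstacle to be precisely the local analysis at the triple points: proving that conditions (i)–(iii), with the cusped-torus-shell hypothesis, genuinely force the veering-tetrahedron local picture — and then that the resulting local angle labels, coorientations, and colourings glue up consistently around each sector.
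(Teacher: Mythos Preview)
Your outline has the right overall architecture, but it is missing the single most important step in the argument, and without it several of your later claims do not follow.

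The gap is this: you never establish that every sector of $B$ is a \emph{quadrilateral}, i.e.\ a disc with exactly four corners (and with the orientation of the sides flipping at two opposite corners). Condition~(i) only tells you each sector is a disc; a~priori it could have $8$, $12$, or any multiple of $4$ corners (condition~(iii) does force the number to be divisible by~$4$, since at each corner one side's orientation must induce the maw coorientation of the other). Your ``local analysis at a triple point $p$'' cannot substitute for this: the local model of a branched surface at a triple point is always the same picture, and condition~(iii) only decorates it with orientations. What you call ``the picture of the unstable branched surface inside a single veering tetrahedron'' with ``one central quadrilateral sector'' is not a local statement at $p$ --- it is a statement about how the six sector-germs at $p$ glue up into global sectors, and that is exactly what needs proving. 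Likewise, your claim that ``organizing the $T_p$ for $p\in\partial s$ around the edge dual to $s$ produces the standard veering fan'' and hence gives angle sum $2\pi$ simply presupposes that $s$ has four corners; an octagonal sector would give angle sum $4\pi$.

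The paper proves the quadrilateral property as a separate lemma, and the argument genuinely uses the cusped-torus-shell hypothesis (not just to say $B$ is a spine). Each complementary region $C$ has $\partial C$ a torus, hence of index~$0$; the sectors appearing on $\partial C$ have index summing to~$0$; one rules out index-$1$ discs (no corners) by a short argument involving the maw coorientation and the shape of neighbouring complementary regions; hence every sector on $\partial C$ has index~$0$, and since each sector is a disc this forces exactly four corners. Once you have this, the fact that every branch-locus component meets a triple point is immediate (each side of a quadrilateral runs between corners, which are triple points), so your separate contradiction argument for that step becomes unnecessary --- and in any case your version of it (``the complementary region on the appropriate side of $\gamma$ is a compact cusped solid torus'') is not quite right as stated. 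Insert the index argument up front and the rest of your outline goes through essentially as you wrote it.
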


\begin{proof}
The orientations on the components of the branch locus induce orientations on the sides of all the sectors of $B$. With this in mind, we first show the following lemma:

\begin{lemma} \label{lemma:vbssector}
Let $B$ be a veering branched surface. Then each sector of $B$ has 4 corners. The orientations on the sides flip on two opposite corners. 
\end{lemma}

\begin{proof}
Because of (iii), at each corner of a sector, the orientation of one side must induce the maw coorientation on the other side. This implies that the orientations on the sides flip on every other corner, hence the number of corners is divisible by $4$. Together with (i), we deduce that each sector is a $4n$-gon, which has index $1-n$.

Fix a component $C$ of $M \cut B$. By (ii), $\partial C$ is a torus which is divided into annuli by the cusp circles. Meanwhile, the intersection of $\partial C$ with $\brloc(B)$ is a graph, the complementary regions of which in $\partial C$ are among the sectors of $B$. Suppose one of these complementary regions is a disc with no corners. Then the disc must lie in the interior of one of the annuli. But if the boundary of the disc has maw coorientation pointing outwards, then the complementary region of $B$ on the other side of the disc cannot be a cusped torus shell, and if the maw coorientation is pointing inwards, then the sector on $\partial C$ surrounding the disc cannot be homeomorphic to a disc. Hence we deduce that all sectors which appear on $\partial C$ have nonpositive index. But their indices must add up to give the index of a torus, which is $0$, so their indices have to be all $0$. Together with the fact that each sector must lie on the boundary of some complementary region of $B$, this proves the lemma.
\end{proof}

In particular, we now know that every component of $\brloc(B)$ must meet at least one triple point. Hence we can define a cellular structure on $B$ as in \Cref{defn:branchsurf} and talk about its dual ideal triangulation $\Delta$.

We define a veering structure on $\Delta$. Each face of $\Delta$ is dual to a 1-cell in $\brloc(B)$. We coorient the face with the coorientation opposite to the one induced by the orientation on the 1-cell. 

Each tetrahedron of $\Delta$ is dual to a triple point of $B$, which is adjacent to four 1-cells in $\brloc(B)$, two of which are oriented inwards and two of which are oriented outwards. This implies that among the four faces of each tetrahedron, two are cooriented inwards and two are cooriented outwards. This induces a natural choice of dihedral angles among $\{0, \pi \}$ on each edge of a tetrahedron. \Cref{lemma:vbssector} implies that the angle sum around each edge is $2\pi$. Hence we have defined a transverse taut structure on $\Delta$.

Finally, each triple point of $B$ is of one of the two forms illustrated in \Cref{fig:vbslocal}. (Note that this uses the assumption that $M$ is oriented.) We color a triple point blue if it is of the form on the left, and color it red if it is of the form on the right. Each edge of $\Delta$ is the top edge of a unique tetrahedron, and we color the edge with the same color as the triple point dual to this tetrahedron. A good mnemonic for this is that an edge of $\Delta$ is b\textbf{L}ue or \textbf{R}ed if the sector of $B$ dual to it has `fins' on the bottom spiralling in the left-handed or right-handed direction respectively. 

\begin{figure}
    \centering
    \fontsize{30pt}{30pt}\selectfont
    \resizebox{!}{5cm}{
\begingroup%
  \makeatletter%
  \providecommand\color[2][]{%
    \errmessage{(Inkscape) Color is used for the text in Inkscape, but the package 'color.sty' is not loaded}%
    \renewcommand\color[2][]{}%
  }%
  \providecommand\transparent[1]{%
    \errmessage{(Inkscape) Transparency is used (non-zero) for the text in Inkscape, but the package 'transparent.sty' is not loaded}%
    \renewcommand\transparent[1]{}%
  }%
  \providecommand\rotatebox[2]{#2}%
  \newcommand*\fsize{\dimexpr\f@size pt\relax}%
  \newcommand*\lineheight[1]{\fontsize{\fsize}{#1\fsize}\selectfont}%
  \ifx\svgwidth\undefined%
    \setlength{\unitlength}{440.71362541bp}%
    \ifx\svgscale\undefined%
      \relax%
    \else%
      \setlength{\unitlength}{\unitlength * \real{\svgscale}}%
    \fi%
  \else%
    \setlength{\unitlength}{\svgwidth}%
  \fi%
  \global\let\svgwidth\undefined%
  \global\let\svgscale\undefined%
  \makeatother%
  \begin{picture}(1,0.49482951)%
    \lineheight{1}%
    \setlength\tabcolsep{0pt}%
    \put(0,0){\includegraphics[width=\unitlength,page=1]{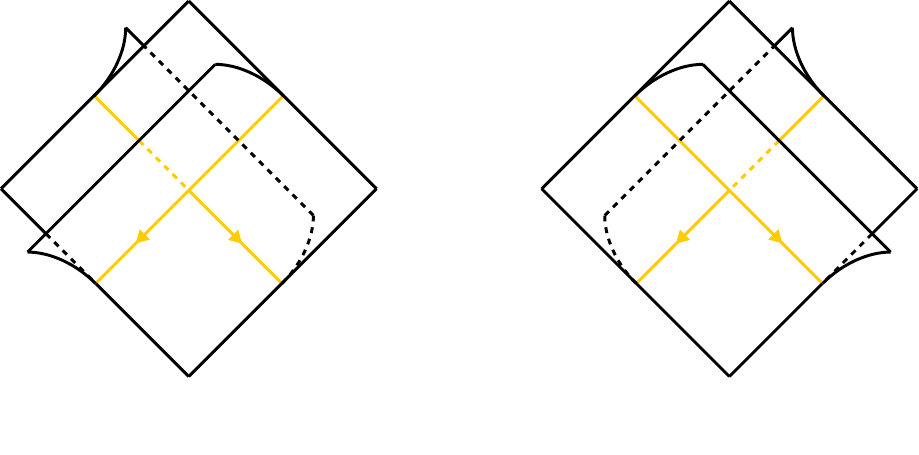}}%
    \put(0.19216899,0.00719935){\color[rgb]{0,0,1}\makebox(0,0)[lt]{\lineheight{1.25}\smash{\begin{tabular}[t]{l}$L$\end{tabular}}}}%
    \put(0.78071764,0.00719935){\color[rgb]{1,0,0}\makebox(0,0)[lt]{\lineheight{1.25}\smash{\begin{tabular}[t]{l}$R$\end{tabular}}}}%
  \end{picture}%
\endgroup%
}
    \caption{Defining the color of a triple point of a veering branched surface in an oriented 3-manifold: The triple point on the left is blue while that on the right is red.}
    \label{fig:vbslocal}
\end{figure}

To verify that this makes $\Delta$ into a veering triangulation, we look at how the edges are colored in each tetrahedron. There are two cases here depending on whether the dual triple point is colored blue or red, and in either case we see that the coloring satisfies \Cref{defn:vt}. It is also straightforward to see that $B$ is then the unstable branched surface of $\Delta$.
\end{proof}

In other words, given a veering branched surface $B$ on an orientable 3-manifold $N$, one can drill out the cores of all the cusped solid torus components of $N \cut B$ to get a 3-manifold $M$ which carries a veering triangulation $\Delta$, obtained by taking the dual ideal triangulation to $B$ in $M$. From now on, for simplicity, we will say that $\Delta$ is the \textit{dual veering triangulation} to $B$, and conversely, $B$ is the \textit{dual veering branched surface} to $\Delta$.

\begin{rmk} \label{rmk:vbsnonori}
Notice that the definition of a veering triangulation only makes sense on an oriented 3-manifold, while the definition of a veering branched surface makes sense for non-orientable 3-manifolds as well. Hence veering branched surfaces serve as a generalization of veering triangulations to the non-orientable case. In the language of \cite{SS21}, the dual ideal triangulations to veering branched surfaces in general are what might be called transverse locally veering triangulations. 

\end{rmk}

We take the time to explain how to recover the flow graph $\Phi$ of a veering triangulation $\Delta$ using just the data of its dual veering branched surface $B$: The set of vertices of $\Phi$ is equal to the set of sectors of $B$, and there are 3 edges for each triple point, going from the 3 sectors which the maw coorientations on the two components of the branch locus passing through the triple point are pointing away from, into the sector which the maw coorientations are pointing into. The embedding of $\Phi$ in $B$ can also be recovered by placing each vertex at the corner of the corresponding sector which the sides are oriented away from, and placing the edges that exit that vertex within that sector. 

We will also slightly generalize the notion of veering branched surfaces to what we call almost veering branched surfaces. Despite not having as much relation to veering triangulations, these serve as good intermediate objects when trying to build veering branched surfaces. 

\begin{defn} \label{defn:avbs}
Let $M$ be the interior of a compact 3-manifold with torus boundary components, and let $B$ be a branched surface in $M$. $B$ along with a choice of orientations on the components of its branch locus is \textit{almost veering} if:
\begin{enumerate}[label=(\roman*)]
    \item No sector of $B$ is a disc without corners.
    \item Each component of $M \cut B$ is a cusped solid torus or a cusped torus shell.
    \item At each triple point, the orientation of each component of $\brloc(B)$ induces the maw coorientation on the other component.
\end{enumerate}

As in \Cref{defn:vbs}, we will often abuse notation and consider the orientations on the components of the branch locus as part of the data of $B$. 
\end{defn}

We have the following analogue of \Cref{lemma:vbssector}.

\begin{lemma} \label{lemma:avbssector}
Let $B$ be an almost veering branched surface. Each sector of $B$ is homeomorphic to a disc, an annulus, or a M\"obius band. If a sector of $B$ is homeomorphic to a disc, it has 4 corners and the orientations on the sides flips on two opposite corners. If a sector of $B$ is homeomorphic to an annulus or a M\"obius band, it has no corners.
\end{lemma}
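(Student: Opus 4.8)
The plan is to mirror the proof of \Cref{lemma:vbssector}, the engine again being additivity of the index on the torus boundary of each complementary region of $B$. The only genuinely new features are the bookkeeping for annulus and M\"obius band sectors, and the fact that in the almost veering setting the standing hypothesis ``every disc sector has corners'' takes over the role played in \Cref{lemma:vbssector} by a separate maw-coorientation argument ruling out corner-free disc sectors.

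First I would rerun the opening of \Cref{lemma:vbssector}: by (iii), at each corner of a sector the orientation of one incident side induces the maw coorientation on the other, so the induced orientations on consecutive sides reverse at alternate corners; hence the number of corners of any sector is divisible by $4$, and for a disc sector the two reversing corners are opposite. A disc sector therefore has at least one corner (hypothesis), hence at least four, so $\ind = 1 - \frac{1}{4}\#\text{corners} \leq 0$; an annulus or M\"obius band sector has $\chi_{\top} = 0$, so $\ind = -\frac{1}{4}\#\text{corners} \leq 0$. Thus every sector of $B$ has nonpositive index. Now fix a component $C$ of $M \backslash\backslash B$; by (ii) its boundary is a torus, cut by its intersection with the branch locus (which includes the cusp circles of $C$) into copies of sectors of $B$. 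By additivity these indices sum to $\ind(T^2) = 0$, and since each is $\leq 0$ each is $0$; as every sector appears on the boundary of some complementary region, every sector of $B$ has index $0$. A disc sector of index $0$ has $\chi_{\top} = 1$, hence exactly four corners, reversing at a pair of opposite corners by the first paragraph; an annulus or M\"obius band sector of index $0$ has no corners. This is the statement.

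The main thing to get right, beyond transcribing \Cref{lemma:vbssector}, is that the corner-divisibility step coming from (iii) is only ever invoked for disc sectors — the nonpositivity of the index of an annulus or M\"obius band sector needs nothing beyond $\chi_{\top} = 0$ — so no new local case analysis at triple points is required. One should also check the orientation bookkeeping when a sector is one-sided in $M$: in that case $\partial C$ carries the (annulus, say) orientation double cover of a M\"obius band sector rather than the sector itself, whose index is $-\frac{1}{2}\#\text{corners}$, again vanishing exactly when the sector has no corners, so the conclusion is unaffected.
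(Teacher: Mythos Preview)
Your proof is correct and takes essentially the same approach as the paper's: use (iii) to get divisibility of the corner count by $4$, use the hypothesis in (i) that disc sectors have corners (replacing the maw-coorientation argument from \Cref{lemma:vbssector}) to get nonpositive index for every sector, and finish with additivity on the torus boundary of a complementary region. Your extra paragraph addressing the one-sided (M\"obius band) case, where $\partial C$ sees the orientation double cover rather than the sector itself, is a detail the paper leaves implicit.
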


\begin{proof}
One can prove this using the same argument as in \Cref{lemma:vbssector}: First observe that (iii) implies the number of corners in each sector is divisible by $4$. In this case, sectors that are discs with no corners are explicitly forbidden by (i), so the indices of all sectors are nonpositive. Finally, an additivity argument implies that all these indices are $0$.
\end{proof}

We also have the following observation.

\begin{prop} \label{prop:avbshyp}
If $B$ is an almost veering branched surface on a hyperbolic 3-manifold $M$ for which all components of $M \cut B$ are cusped torus shells, then $B$ is automatically a veering branched surface.
\end{prop}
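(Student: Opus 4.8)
The plan rests on the observation that \Cref{defn:vbs} and \Cref{defn:avbs} differ only in clause (i): a veering branched surface forbids annulus and M\"obius band sectors, whereas an almost veering branched surface allows them (requiring only that disc sectors have corners). Since clauses (ii) and (iii) are identical, it suffices to show that, under the hypotheses of the proposition, $B$ has no annulus or M\"obius band sector; then every sector is a disc, clauses (i)--(iii) of \Cref{defn:vbs} all hold, and $B$ is veering.

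So suppose, for contradiction, that $A$ is an annulus or M\"obius band sector. By \Cref{lemma:avbssector}, $A$ has no corners, so each component of $\partial A$ is a smooth embedded closed curve contained in a single component of the branch locus, along which the maw coorientation is constant. I would then carry out the bookkeeping of how the sheets of $B$ and the complementary regions of $B$ in $M$ fit together along $\partial A$, using the two local models of the branch locus and condition (iii) of \Cref{defn:vbs}. The expected upshot is a concrete description of the (one or two) complementary regions adjacent to $A$ -- each a cusped torus shell by hypothesis -- with $\partial A$ appearing among their cusp circles, and with the two boundary curves of $A$ cobounding an annulus inside the boundary torus of one such region across which no branch locus runs.

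The main step is then to assemble, out of $A$ and these adjacent cusped torus shells, an essential surface in $M$: an incompressible torus (by doubling $A$ across an adjacent region, or by capping $A$ off with the complementary annulus in an adjacent boundary torus), a Klein bottle (in the M\"obius case, via the twisted $I$-bundle neighbourhood of $A$), or an essential annulus. One then verifies this surface is neither compressible nor boundary-parallel, and here -- exactly as in the proof of \Cref{lemma:vbssector} -- the cusped torus shell hypothesis does the work: a compressing disc or a boundary-parallelism would have to pass through the complementary regions neighbouring $A$, and, tracing the maw coorientations, this would force one of those regions to fail to be a cusped torus shell, or would produce a disc sector with no corners, both impossible. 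Since a hyperbolic $3$-manifold is atoroidal and contains no essential annuli or one-sided Klein bottles, this is the desired contradiction, and $B$ is therefore veering.

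The reduction in the first paragraph and the final appeal to atoroidality are routine, and the ``no corners'' input is immediate from \Cref{lemma:avbssector}. The delicate part -- the step I expect to be the genuine obstacle -- is the combination of the local analysis along $\partial A$ and the surface-building: one must untangle precisely how the branch locus of $B$ meets the boundary tori of the complementary regions surrounding an annulus or M\"obius band sector, in order both to pin down which essential surface arises and to prove that it is genuinely essential rather than compressible or peripheral.
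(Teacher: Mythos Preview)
Your reduction to ruling out annulus and M\"obius band sectors is exactly right and matches the paper, but you substantially overestimate what is needed, and one of your proposed constructions misfires.

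The paper's argument is two sentences. Given such a sector $A$, look at the complementary regions of $B$ on its two sides; if $A$ is one-sided, first pass to a finite cover of $M$ in which it becomes two-sided. By hypothesis each of these regions is a cusped torus shell and so contains an end of $M$. Since $A$ lies in the boundary torus of each shell, its core is homotopic through the product structure of each shell into the corresponding end. Hence the two ends are \emph{parallel} --- equivalently, extending $A$ through the two product structures gives a properly embedded $\pi_1$-injective annulus between cusps of $M$ --- which is impossible in a hyperbolic $3$-manifold.

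Two points where your plan goes astray. First, the ``local analysis along $\partial A$'' that you single out as the genuine obstacle is simply not needed: you never have to understand how the branch locus sits on the boundary tori of the adjacent regions, only that each adjacent region is a product $T^2 \times [0,\infty)$, and the hypothesis gives you that for free. Second, your torus route fails as stated: capping $A$ off with the complementary annulus in the boundary torus of an adjacent shell just reproduces that entire boundary torus, which is peripheral rather than essential, and so yields no contradiction. The essential-annulus option you mention is the one that works, and it is precisely the paper's argument with the scaffolding removed.
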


\begin{proof}
If $M$ is orientable and there were any annulus sectors, then the complementary regions of $B$ on the two sides of the sector would contain parallel ends, which is impossible for hyperbolic $M$. If $M$ is non-orientable or if there are any M\"obius band sectors, this argument gives a contradiction in a finite cover of $M$.
\end{proof}

Before we end this section, we explain some notation that we will be using in the rest of this paper.

Given an almost veering branched surface $B$ in a 3-manifold $M$, recall that there are implicitly chosen orientations for the components of its branch locus. We will also implicitly coorient the components of the branch locus by the maw coorientation, unless otherwise stated. These orientations and coorientations will be denoted by arrows in the figures. 

Also, when $M$ is oriented, we will say that a triple point of $B$ is blue or red according to \Cref{fig:vbslocal}, as in the proof of \Cref{prop:vbs}. From that proof, we note that if $B$ is veering and dual to a veering triangulation $\Delta$, then the number of blue or red triple points of $B$ is equal to the number of blue or red edges in $\Delta$ respectively.

\section{Surgeries on veering branched surfaces} \label{sec:surgery}

In this section, we will describe some surgical operations one can perform on almost veering branched surfaces. The two basic types are horizontal and vertical surgery, and we will also describe a generalization of horizontal surgery. We will only be using horizontal surgery in our constructions in the rest of this paper, but since vertical surgery admits a very similar description, we introduce it here as well. It is also an interesting question how vertical surgery can interact with the constructions we present in this paper, see \Cref{sec:questions}.

The surgeries will be done along certain types of curves carried by almost veering branched surfaces, hence we make the following preliminary definition. 

\begin{defn} \label{defn:orisurcurve}
Let $B$ be a branched surface in a 3-manifold $M$. Let $\alpha \subset B$ be a smoothly embedded curve which avoids the triple points of $B$.

$\alpha$ is said to be \textit{orientation preserving} if the tangent planes of $B$ along $\alpha$ can be oriented in a coherent way. Otherwise $\alpha$ is said to be \textit{orientation reversing}. Similarly, $\alpha$ is said to be \textit{coorientation preserving} if the tangent planes of $B$ along $\alpha$ can be cooriented in a coherent way, otherwise $\alpha$ is \textit{coorientation reversing}. Note that we do not assume that $M$ is orientable, hence orientation preserving does not imply coorientation preserving (and vice versa).

Let $N$ be a small tubular neighborhood of $\alpha$ in $M$. For $N$ small enough, $N \cap B$ is an annulus or M\"obius band $A$ with sectors attached along disjoint arcs, with each arc corresponding to a point of intersection between $\alpha$ and the branch locus of $B$. We call $A$ a \textit{smooth neighborhood} of $\alpha$ in $B$. $N \cut A$ has one or two components, depending on whether $\alpha$ is coorientation preserving. We refer to the components of $N \cut A$ as the \textit{regular half-neighborhoods} of $\alpha$ in $B$. Note that the restriction of $B$ to each regular half-neighborhood is a branched surface, specifically it is an annulus with sectors attached on one side.
\end{defn}

\subsection{Horizontal surgery} \label{subsec:hsur}

\begin{defn} \label{defn:hsurcurve}
Let $B$ be an almost veering branched surface in a 3-manifold $M$ and let $\alpha \subset B$ be a smoothly embedded curve which avoids the triple points of $B$. Let $N$ be a tubular neighborhood of $\alpha$ in $M$ and let $A$ be a smooth neighborhood of $\alpha$ in $B$.
We say that $\alpha$ is a \textit{horizontal surgery curve} if:
\begin{enumerate}
    \item $\alpha$ is orientation and coorientation preserving; hence $A$ is an annulus and there are two regular half-neighborhoods of $\alpha$ in $B$, which we label as $N_1$ and $N_2$.
    \item The arcs in the branch locus of $N \cap B$ are all oriented from the same boundary component of $A$ to the other.
    \item The arcs in the branch locus of each $N_i  \cap B$ are cooriented in the same direction, but the two directions for the two regular half-neighborhoods are opposite to each other. 
\end{enumerate}

See \Cref{fig:hsur} top for an illustration of a horizontal surgery curve.
\end{defn}

Let $\alpha$ be a horizontal surgery curve on an almost veering branched surface $B$. As the name suggests, we will explain how to do surgery along $\alpha$. First we set up some orientation conventions: Orient $\alpha$ so that the arcs in $\brloc(N_1 \cap B)$ are cooriented coherently as $\alpha$. Note that $(\text{arcs in }\brloc(N_1 \cap B), \alpha)$ determines an orientation of $A$. Orient the meridian $\mu$ of $N_1$ such that the basis $(\mu, \alpha)$ on $\partial N_1$ agrees with this orientation on $A$.

For each $k \geq 0$, one can cut $N_1$ out of $M$, and glue it back with a map that is identity on $\partial N_1 \cut A$ and sends the meridian to a curve of isotopy class $\mu-k \alpha$, such that the arcs in the branch locus of $N_1 \cap B$ intersect those in $N_2 \cap B$ minimally. See \Cref{fig:hsur}. This gives us a branched surface $B'$ in another 3-manifold $M'$, topologically obtained by doing $\frac{1}{-k}$ surgery along $\alpha$ on $M$ (with respect to the basis we chose above). We call this operation a \textit{$\frac{1}{-k}$ horizontal surgery along $\alpha$}.

\begin{rmk} \label{rmk:hsurrmk}
Alternatively, one can also perform the surgery by cutting out $N_2$ and gluing it back appropriately. The resulting branched surface will be isotopic to $B'$ above.
\end{rmk}

Notice that $\alpha \subset B'$ continues to be a horizontal surgery curve (as long as the gluing map on $\partial N_1$ is generic enough to avoid triple intersections of $\alpha$ and the branch loci of $N_1 \cap B$ and $N_2 \cap B$), and if we perform $\frac{1}{-l}$ horizontal surgery on it, the total effect is equivalent to doing $\frac{1}{-k-l}$ horizontal surgery along the original $\alpha \subset B$. For this reason, it often suffices to consider $\frac{1}{-1}$ horizontal surgeries, and for convenience we abbreviate doing $\frac{1}{-1}$ horizontal surgery along $\alpha$ as just doing \textit{horizontal surgery along $\alpha$}.

\begin{prop} \label{prop:hsur}
Let $\alpha$ be a horizontal surgery curve on an almost veering branched surface $B$. Let $B'$ be the branched surface obtained by doing $\frac{1}{-k}$ horizontal surgery along $\alpha$. Then $B'$ is almost veering.

Furthermore, if $B$ is veering then $B'$ is veering as well.
\end{prop}

\begin{proof} 
We need to check conditions (i)-(iii) in \Cref{defn:avbs}. 

Conditions (ii) and (iii) are straightforward. 
For (ii), the complementary regions of $B$ can be canonically identified with those of $B'$.
For (iii), the orientations on the components of $\brloc(B)$ induce orientations on those of $\brloc(B')$, which satisfy (iii) for $k \geq 0$. 

For (i), we separate the sectors of $B'$ into three types and inspect them one by one. The sectors of $B'$ that do not intersect $A$ in their interior are among the sectors of $B$, hence are discs with corners, annuli, or M\"obius bands. The sectors of $B'$ whose interiors lie in $A$ are discs with $4$ corners. Finally, for the sectors of $B'$ that meet $\partial A$ in their interior, these are homeomorphic to complementary regions of $\alpha$ in sectors of $B$ and each contain at least one corner.

In fact, once we know that $B'$ is almost veering, \Cref{lemma:avbssector} implies that the last type of sectors must be discs with corners. If $B$ is veering, the analysis above combines with this observation to show that every sector of $B'$ is a disc, hence $B'$ is veering.
\end{proof}

\begin{figure}
    \centering    
    \fontsize{14pt}{14pt}\selectfont
    \resizebox{!}{6cm}{
\begingroup%
  \makeatletter%
  \providecommand\color[2][]{%
    \errmessage{(Inkscape) Color is used for the text in Inkscape, but the package 'color.sty' is not loaded}%
    \renewcommand\color[2][]{}%
  }%
  \providecommand\transparent[1]{%
    \errmessage{(Inkscape) Transparency is used (non-zero) for the text in Inkscape, but the package 'transparent.sty' is not loaded}%
    \renewcommand\transparent[1]{}%
  }%
  \providecommand\rotatebox[2]{#2}%
  \newcommand*\fsize{\dimexpr\f@size pt\relax}%
  \newcommand*\lineheight[1]{\fontsize{\fsize}{#1\fsize}\selectfont}%
  \ifx\svgwidth\undefined%
    \setlength{\unitlength}{315.21932575bp}%
    \ifx\svgscale\undefined%
      \relax%
    \else%
      \setlength{\unitlength}{\unitlength * \real{\svgscale}}%
    \fi%
  \else%
    \setlength{\unitlength}{\svgwidth}%
  \fi%
  \global\let\svgwidth\undefined%
  \global\let\svgscale\undefined%
  \makeatother%
  \begin{picture}(1,0.96205428)%
    \lineheight{1}%
    \setlength\tabcolsep{0pt}%
    \put(0,0){\includegraphics[width=\unitlength,page=1]{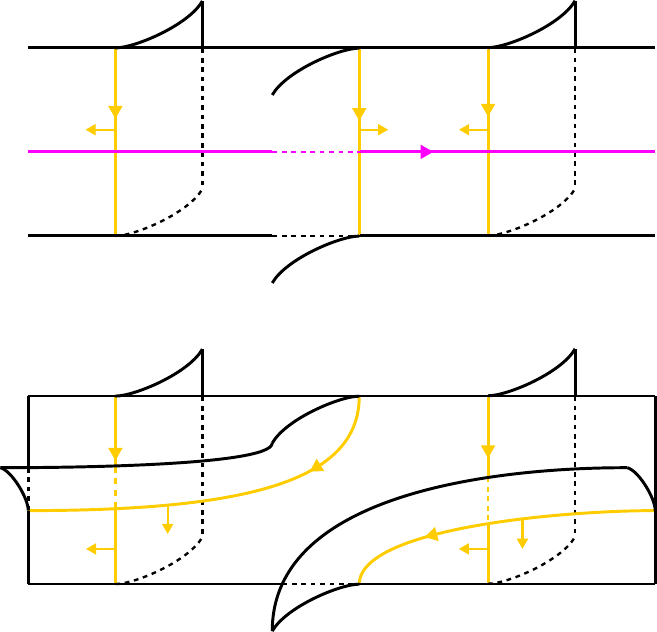}}%
    \put(0.63194487,0.75923268){\color[rgb]{1,0,1}\makebox(0,0)[lt]{\lineheight{1.25}\smash{\begin{tabular}[t]{l}$\alpha$\end{tabular}}}}%
    \put(0,0){\includegraphics[width=\unitlength,page=2]{hsur.pdf}}%
  \end{picture}%
\endgroup%
}
    \caption{Doing horizontal surgery on a horizontal surgery curve $\alpha$. Blue triple points are produced from the surgery in this example.}
    \label{fig:hsur}
\end{figure}

Let $n_i$ be the number of sectors attached along $N_i$. If $B'$ is obtained by doing $\frac{1}{-k}$ horizontal surgery along $\alpha$, then the number of triple points of $B'$ is $kn_1n_2$ more than that of $B$. If $M$ is oriented, then the added triple points are all of the same color. This color is red if the orientation of $N_1$ we chose above is coherent with the orientation of $M$, and blue otherwise.

\begin{rmk} \label{rmk:hsur}
If $\alpha$ is a horizontal surgery curve, the 1-cells in the branch locus which it passes through are dual to faces which join together as an annulus smoothly carried by the 2-skeleton of the dual veering triangulation. In fact, this is the reason why we call this surgery `horizontal'. When we do horizontal surgery along $\alpha$, we cut open the triangulation along this annulus, and insert some tetrahedra in-between.

In \cite{SS23}, Schleimer and Segerman show that any veering triangulation can be canonically decomposed as a union of \textit{veering solid tori}. The core of each veering solid torus is an example of a horizontal surgery curve. In this case, surgery along such a core is equivalent to inserting more veering solid tori along the core.
\end{rmk}

\subsection{Vertical surgery} \label{subsec:vsur}

\begin{defn} \label{defn:vsurcurve}
Let $B$ be an almost veering branched surface in a 3-manifold $M$ and let $\alpha \subset B$ be an oriented smoothly embedded curve which avoids the triple points of $B$. Let $N$ be a tubular neighborhood of $\alpha$ in $M$ and let $A$ be a smooth neighborhood of $\alpha$ in $B$.
We say that $\alpha$ is a \textit{vertical surgery curve} if:
\begin{enumerate}
    \item $\alpha$ is orientation and coorientation preserving; hence $A$ is an annulus and there are two regular half-neighborhoods of $\alpha$ in $B$, which we label as $N_1$ and $N_2$.
    \item The arcs in the branch locus of each $N_i \cap B$ are oriented from the same boundary component of $A$ to the other, but the two orientations for the two regular half-neighborhoods are opposite to each other.
    \item The arcs in the branch locus of $N \cap B$ are all cooriented coherently as $\alpha$.
\end{enumerate}

See \Cref{fig:vsur} left for an illustration of a vertical surgery curve.
\end{defn}

We explain how to do surgery along a vertical surgery curve $\alpha$. Again, we first set up the orientations: Note that $(\text{arcs in }\brloc(N_1 \cap B), \alpha)$ determines an orientation of $A$. Orient the meridian $\mu$ of $N_1$ such that the basis $(\mu, \alpha)$ on $\partial N_1$ agrees with this orientation on $A$.

For each $k \geq 0$, cut $N_1$ out of $M$ and glue it back with a map that is identity on $\partial N_1 \cut A$ and sends the meridian to a curve of isotopy class $\mu+ k \alpha$, such that the arcs in the branch locus of $N_1 \cap B$ intersect those in $N_2 \cap B$ minimally. See \Cref{fig:vsur}. This gives us a branched surface $B'$ in another 3-manifold $M'$, topologically obtained by doing $\frac{1}{k}$ surgery along $\alpha$ in $M$ (with respect to the basis we chose above). We call this operation a \textit{$\frac{1}{k}$ vertical surgery along $\alpha$}.

\begin{rmk} \label{rmk:vsurrmk}
As in \Cref{rmk:hsurrmk}, one can perform the surgery as cutting out $N_2$ and gluing it back appropriately. 
\end{rmk}

As for horizontal surgery, notice that $\alpha \subset B'$ continues to be a vertical surgery curve (for generic gluing maps), and if we perform $\frac{1}{l}$ vertical surgery on it, the total effect is equivalent to doing $\frac{1}{k+l}$ vertical surgery along the original $\alpha \subset B$. We abbreviate doing $\frac{1}{1}$ vertical surgery along $\alpha$ as just doing \textit{vertical surgery along $\alpha$}.

The same argument as in \Cref{prop:hsur} can be used to show the following proposition.

\begin{prop} \label{prop:vsur}
Let $\alpha$ be a vertical surgery curve on an almost veering branched surface $B$. Let $B'$ be the branched surface obtained by doing $\frac{1}{k}$ vertical surgery along $\alpha$. Then $B'$ is almost veering.

Furthermore, if $B$ is veering then $B'$ is veering as well.
\end{prop}

\begin{figure}
    \centering
    \fontsize{14pt}{14pt}\selectfont
    \resizebox{!}{5cm}{
\begingroup%
  \makeatletter%
  \providecommand\color[2][]{%
    \errmessage{(Inkscape) Color is used for the text in Inkscape, but the package 'color.sty' is not loaded}%
    \renewcommand\color[2][]{}%
  }%
  \providecommand\transparent[1]{%
    \errmessage{(Inkscape) Transparency is used (non-zero) for the text in Inkscape, but the package 'transparent.sty' is not loaded}%
    \renewcommand\transparent[1]{}%
  }%
  \providecommand\rotatebox[2]{#2}%
  \newcommand*\fsize{\dimexpr\f@size pt\relax}%
  \newcommand*\lineheight[1]{\fontsize{\fsize}{#1\fsize}\selectfont}%
  \ifx\svgwidth\undefined%
    \setlength{\unitlength}{426.39610559bp}%
    \ifx\svgscale\undefined%
      \relax%
    \else%
      \setlength{\unitlength}{\unitlength * \real{\svgscale}}%
    \fi%
  \else%
    \setlength{\unitlength}{\svgwidth}%
  \fi%
  \global\let\svgwidth\undefined%
  \global\let\svgscale\undefined%
  \makeatother%
  \begin{picture}(1,0.58371548)%
    \lineheight{1}%
    \setlength\tabcolsep{0pt}%
    \put(0,0){\includegraphics[width=\unitlength,page=1]{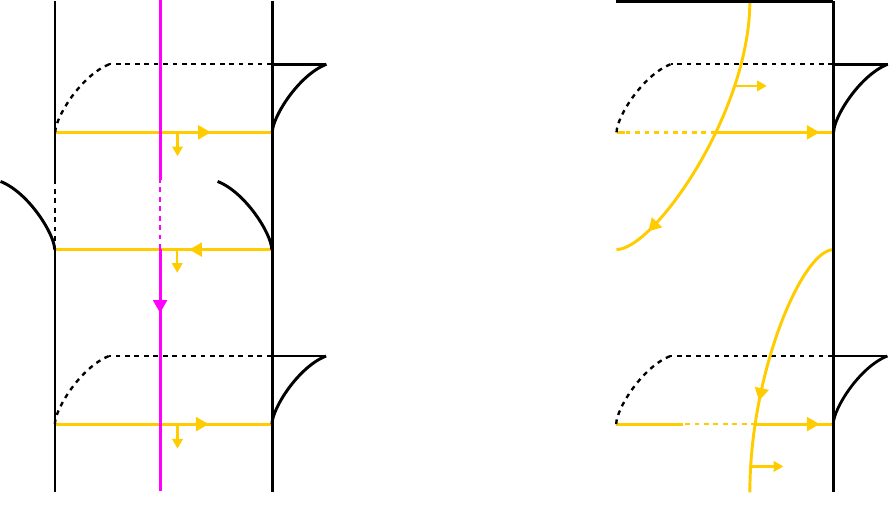}}%
    \put(0.19380052,0.23532311){\color[rgb]{1,0,1}\makebox(0,0)[lt]{\lineheight{1.25}\smash{\begin{tabular}[t]{l}$\alpha$\end{tabular}}}}%
    \put(0,0){\includegraphics[width=\unitlength,page=2]{vsur.pdf}}%
  \end{picture}%
\endgroup%
}
    \caption{Doing vertical surgery on a vertical surgery curve $\alpha$. Blue triple points are produced from the surgery in this example.}
    \label{fig:vsur}
\end{figure}

Let $n_i$ be the number of sectors attached along $N_i$. If $B'$ is obtained by doing $\frac{1}{k}$ vertical surgery along $\alpha$, then the number of triple points of $B'$ is $kn_1n_2$ more than that of $B$. If $M$ is oriented, then the added triple points are all of the same color. This color is red if the orientation of $N_1$ we chose above is coherent with the orientation of $M$, and blue otherwise.

\begin{rmk} \label{rmk:vsur}
If $\alpha$ is a vertical surgery curve, the 1-cells in the branch locus which $\alpha$ passes through determines an annulus as in \Cref{rmk:hsur}, but here the annulus is not smoothly carried by the 2-skeleton of the dual veering triangulation. Regardless, when one performs vertical surgery along $\alpha$, this annulus is cut open and some tetrahedra are inserted within. 

One example of vertical surgery curves are the infinitesimal cycles of the flow graph, as described in \cite{AT22} (and perturbed to avoid triple points). In this case, the collection of the added tetrahedra forms a wall in the terminology of \cite{AT22}.

We also explain why we call this surgery `vertical'. Suppose $B$ is veering, then by pushing $\alpha$ to the top of each 1-cell in the branch locus it intersects, one can see that $\alpha$ is carried by the 1-skeleton of $B$. This 1-skeleton is called the dual graph of the dual veering triangulation $\Delta$ in work of Landry-Minsky-Taylor, and by combining \cite[Proposition 5.7]{LMT20} and \cite[Corollary 5.16]{AT22}, we know that $\alpha$ is homotopic to a periodic orbit of the pseudo-Anosov flow associated to $\Delta$ under \Cref{thm:vtpAcorr}. We conjecture that the associated pseudo-Anosov flow after vertical surgery along $\alpha$ is related to the original flow via Goodman-Fried surgery on this periodic orbit. See \cite{Sha21} for more information on Goodman-Fried surgery.
\end{rmk}

\subsection{Variants of horizontal surgery: halved and concurrent} \label{subsec:hsurvar}

The idea behind horizontal and vertical surgeries can be easily modified to produce variants. Here we introduce a halved variant of horizontal surgery, which has the property of being equivariant under an involution. This property will allow us to take veering branched surfaces constructed on unit tangent bundles of genus zero orbifolds and quotient them down to Montesinos link complements under a branched double cover. It turns out that this halved horizontal surgery can be performed along multiple interacting sites concurrently. We will utilize this to construct veering branched surfaces on unit tangent bundles when we have to modify the order of more than one cone point of the base orbifold.

\begin{defn}
Let $B \subset M$ be an almost veering branched surface. Let $A \subset M$ be an annulus such that:
\begin{enumerate}
    \item $A$ is transverse to $B$ and to $\brloc(B)$, and $\partial A \subset B$
    \item All intersections of $A$ with $\brloc(B)$ lie in $\partial A$ and induce the same coorientation on $A$
    \item The boundary components of $A$ are horizontal surgery curves on $B$.
    \item The train track $A \cap B$ on $A$ consists of the two boundary components of $A$ and some branches in the interior going from one boundary component to the other, combed in different directions on the two boundary components.
\end{enumerate}

Then we say that the boundary components of $A$ form a \textit{pair of parallel horizontal surgery curves}, and that $A$ is a \textit{connecting annulus} between them.
\end{defn}

We explain a type of horizontal surgery one can perform along such a connecting annulus $A$. Let $\alpha$ and $\beta$ be the two boundary components of $A$. Take a neighborhood of $A$, $N \cong A \times [0,1]$, where the coorientation on $A$ in (2) is from $A \times \{1 \}$ to $A \times \{0\}$, and such that $\partial A \times [0,1]$ are smooth neighborhoods of $\alpha$ and $\beta$ in $B$. Orient $\alpha$ so that the arcs in $\brloc(N \cap B)$ which meet $\alpha$ are all cooriented coherently as $\alpha$, and label the branches of $A \cap B$ in the interior of $A$ as $b_1,...,b_n$ in the order of transversal by $\alpha$ (indices taken mod $n$ here). As in \Cref{subsec:hsur}, $(\text{arcs in }\brloc(N \cap B)\text{ which meet }\alpha, \alpha)$ determines an orientation on the smooth neighborhood of $\alpha$. Orient the meridian $\mu$ of $N$ such that the basis $(\mu,\alpha)$ on $\partial N$ agrees with this orientation.

Now for each $0 \leq j \leq n$, one can cut $N$ out of $M$, and glue it back with a map that is identity on $A \times \{ 0 \}$, takes $b_i \times \{1 \}$ to $b_{i+j} \times \{1\}$ for each $i$ (indices taken mod $n$ here) on $A \times \{1 \}$, and sends the meridian to a curve of isotopy class $\mu-\alpha$, such that the arcs in the branch locus of $N \cap B$ intersect those in the other half regular neighborhoods of $\alpha$ and $\beta$ minimally. 

See \Cref{fig:hsurinv} (ignoring the gray lines for now) top for an illustration of a connecting annulus. In \Cref{fig:hsurinv} bottom left we illustrate the result of the surgery operation from the perspective of the neighborhood $N$. In \Cref{fig:hsurinv} bottom right we illustrate the result of the surgery operation from the perspective of the exterior $M \cut N$. The orange curves in the bottom left and right are to be identified, and is the meridian in the surgered manifold.

This gives us a branched surface $B'$ in another 3-manifold $M'$, topologically obtained by doing $\frac{1}{-1}$ surgery along $\alpha$ on $M$ (with respect to the basis we chose above). 
We call this operation a \textit{$(\frac{-j}{n},\frac{j-n}{n})$ horizontal surgery along $A$}.

Intuitively what we have done is perform $\frac{j}{n}$ of a horizontal surgery along $\alpha$ and $\frac{n-j}{n}$ of a horizontal surgery along $\beta$, and together these `add up' to a complete horizontal surgery. Similar to our remark for basic horizontal surgery, observe that $A \subset M'$ continues to be a connecting annulus between a pair of parallel horizontal surgery curves (for generic gluing maps), and one can repeat this procedure, in particular producing many branched surfaces on the 3-manifolds obtained by $\frac{1}{-k}$ surgery along $\alpha$, for every $k \geq 0$.

\begin{prop} \label{prop:hsurinv}
Let $A$ be a connecting annulus between a pair of parallel horizontal surgery curves on an almost veering branched surface $B$. Let $B'$ be the branched surface obtained by doing $(\frac{-j}{n},\frac{j-n}{n})$ horizontal surgery along $A$. Then $B'$ is almost veering. 

Furthermore, if $B$ is veering then $B'$ is veering as well.
\end{prop}

\begin{proof}
The same arguments as in \Cref{prop:hsur} work here with one necessary modification for condition (ii): the complementary regions of $B$ may not be in one-to-one correspondence with those of $B'$. But observe that the complementary regions of $B$ which meet the interior of $A$ must be 2-cusped solid tori, and $A$ must meet these components in meridional discs, by (2) and (4). The complementary regions of $B'$ can be obtained by cutting and gluing these 2-cusped solid tori along meridional discs, the results of which will be 2-cusped solid tori again.
\end{proof}

\begin{figure}
    \centering
    \fontsize{14pt}{14pt}\selectfont
    \resizebox{!}{8cm}{
\begingroup%
  \makeatletter%
  \providecommand\color[2][]{%
    \errmessage{(Inkscape) Color is used for the text in Inkscape, but the package 'color.sty' is not loaded}%
    \renewcommand\color[2][]{}%
  }%
  \providecommand\transparent[1]{%
    \errmessage{(Inkscape) Transparency is used (non-zero) for the text in Inkscape, but the package 'transparent.sty' is not loaded}%
    \renewcommand\transparent[1]{}%
  }%
  \providecommand\rotatebox[2]{#2}%
  \newcommand*\fsize{\dimexpr\f@size pt\relax}%
  \newcommand*\lineheight[1]{\fontsize{\fsize}{#1\fsize}\selectfont}%
  \ifx\svgwidth\undefined%
    \setlength{\unitlength}{467.27786087bp}%
    \ifx\svgscale\undefined%
      \relax%
    \else%
      \setlength{\unitlength}{\unitlength * \real{\svgscale}}%
    \fi%
  \else%
    \setlength{\unitlength}{\svgwidth}%
  \fi%
  \global\let\svgwidth\undefined%
  \global\let\svgscale\undefined%
  \makeatother%
  \begin{picture}(1,0.71054182)%
    \lineheight{1}%
    \setlength\tabcolsep{0pt}%
    \put(0,0){\includegraphics[width=\unitlength,page=1]{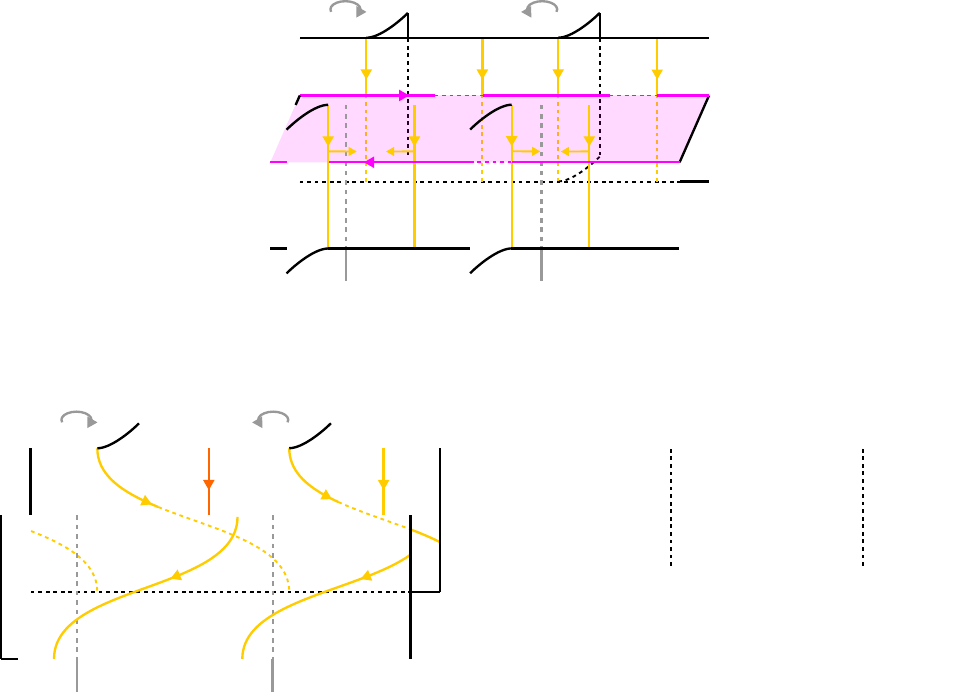}}%
    \put(0.37306532,0.55652717){\color[rgb]{1,0,1}\makebox(0,0)[lt]{\lineheight{1.25}\smash{\begin{tabular}[t]{l}$\beta$\end{tabular}}}}%
    \put(0,0){\includegraphics[width=\unitlength,page=2]{hsurinv2.pdf}}%
    \put(0.40652987,0.6246024){\color[rgb]{1,0,1}\makebox(0,0)[lt]{\lineheight{1.25}\smash{\begin{tabular}[t]{l}$\alpha$\end{tabular}}}}%
    \put(0,0){\includegraphics[width=\unitlength,page=3]{hsurinv2.pdf}}%
  \end{picture}%
\endgroup%
}
    \caption{Doing halved horizontal surgery along an equivariant connecting annulus (top, in pink). Bottom left: Result from the perspective of the neighborhood $N$. Bottom right: Result from the perspective of the exterior $M \cut N$. The orange curves are to be identified.}
    \label{fig:hsurinv}
\end{figure}

The ability to do fractions of horizontal surgeries is of interest in the setting where we have an involution on the 3-manifold. More precisely:

\begin{lemma} \label{lemma:inv}
Let $\iota: M \to M$ be an involution with fixed point set being a link in $M$, such that $M \to M/\langle \iota \rangle$ is a branched double cover. If $B$ is an (almost) veering branched surface on $M$ which is preserved by $\iota$, and for which the fixed point set of $\iota$ does not intersect $B$, then $B/\langle \iota \rangle$ is an (almost) veering branched surface in $M/\langle \iota \rangle$.

Furthermore, if $A$ is a connecting annulus between a pair of parallel horizontal surgery curves, and if $\iota$ preserves $A$ and its fixed point set intersects $A$, then $A/\langle \iota \rangle$ is a disc whose boundary curve is a horizontal surgery curve on $B/\langle \iota \rangle$.
\end{lemma}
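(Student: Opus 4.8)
The plan is to prove the two assertions separately: the first by noticing that, away from $\mathrm{Fix}(\iota)$, the covering $M\to M/\langle\iota\rangle$ is an honest local homeomorphism, so that the branched-surface structure transports mechanically; the second by pinning down exactly which involution of an annulus is compatible with the hypotheses. For the first assertion I would start from the observation that $\mathrm{Fix}(\iota)$ is a closed link disjoint from the compact set $B$, so a neighbourhood $U\supset B$ misses $\mathrm{Fix}(\iota)$ and $U\to U/\langle\iota\rangle$ is an unbranched double cover, in particular a local homeomorphism. Hence $B/\langle\iota\rangle$ is again locally of the form of \Cref{fig:branchsurflocal}, i.e.\ a branched surface, with branch locus, triple points, and maw coorientation all the images of those of $B$ (the maw coorientation is intrinsic, being read off from the number of local sheets, so it certainly descends). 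The chosen orientations on the branch-locus components descend because $\iota$ permutes these components and any component it fixes setwise carries a \emph{free} involution of a circle, which is orientation preserving; condition (iii) of \Cref{defn:vbs} at a triple point of $B/\langle\iota\rangle$ is then identical to condition (iii) at its (either) preimage, and the colour of the triple point is recorded by \Cref{fig:vbslocal}. For condition (i): $\iota$ permutes the sectors of $B$, and by Brouwer's fixed point theorem it cannot fix a disc sector setwise (such a sector lies in $B$, which misses $\mathrm{Fix}(\iota)$), so the disc sectors are swapped in pairs and descend to discs, while annulus and M\"obius sectors descend to annuli or M\"obius bands; the number of corners of each disc sector, and hence the condition "every disc sector has corners'' in \Cref{defn:avbs}, is preserved by the local homeomorphism, so condition (i) holds in both the veering and almost veering cases.

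The remaining point, condition (ii), is where I expect the real work to be. A component $V$ of $M\backslash\backslash B$ is either swapped by $\iota$ with a distinct component $V'$ — then its image is homeomorphic to $V$ and there is nothing to check — or fixed setwise. In the latter case the frontier of $V$ is a union of sectors of $B$, hence lies in $B$ and misses $\mathrm{Fix}(\iota)$, so $\iota$ restricts to a free involution of the torus $\partial V$ and the only fixed points of $\iota|_V$ lie in the interior of $V$; thus $V\to V/\langle\iota\rangle$ is a (possibly branched) double cover whose branch locus is a sublink of $\mathrm{Fix}(\iota)$ in $\mathrm{int}\,V$. I would then argue that $V/\langle\iota\rangle$ is again a cusped solid torus or cusped torus shell in the sense of \Cref{defn:cusptorus}: in the situations of interest the branch locus of $V\to V/\langle\iota\rangle$ is (part of) the core of the cusped solid torus $V$ — indeed, by the discussion following \Cref{prop:vbs} the cores of the cusped solid torus complementary regions of a veering branched surface are exactly the curves one drills, which here make up $\mathrm{Fix}(\iota)$, compare \Cref{subsec:montesinos} — so the cover is modelled on $D^2\times S^1\to D^2\times S^1$, $(z,w)\mapsto(z^2,w)$, whose quotient is again a solid torus, and the cusp circles, living on $\partial V$ away from $\mathrm{Fix}(\iota)$, descend to a nonempty non-meridional parallel family (non-meridionality being checked by pulling back a meridian disc through the unbranched part of the cover). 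Making this step clean — in particular confirming that a preserved complementary region cannot quotient to something with non-torus boundary — is the main obstacle; everything else is bookkeeping via the local homeomorphism.

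For the second assertion, suppose $A$ is a connecting annulus between the parallel horizontal surgery curves $\alpha,\beta=\partial A$, with $\iota(A)=A$ and $\mathrm{Fix}(\iota)\cap A\neq\emptyset$. First I would determine $\iota|_A$. Since $A\cap B$ contains, besides $\partial A$, branches running across $A$ from $\alpha$ to $\beta$, the core circle of $A$ meets $B$; as $\mathrm{Fix}(\iota)$ avoids $B$, it cannot contain a $1$-dimensional subset of $A$, so $\mathrm{Fix}(\iota)\cap A$ is a nonempty finite set and $\iota|_A$ is an involution of the annulus with isolated fixed points. If $\iota$ fixed $\alpha$ (hence $\beta$) setwise it would act freely on each boundary circle (they lie in $B$), which forces $\iota|_A$ to be free, contradicting $\mathrm{Fix}(\iota)\cap A\neq\emptyset$; therefore $\iota$ interchanges $\alpha$ and $\beta$. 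The Riemann--Hurwitz relation $\chi(A)=2\,\chi(A/\langle\iota\rangle)-\#(\mathrm{Fix}(\iota)\cap A)$ together with $\chi(A)=0$ then gives $\#(\mathrm{Fix}(\iota)\cap A)=2$ and $\chi(A/\langle\iota\rangle)=1$; since $A/\langle\iota\rangle$ is a compact surface whose boundary is the single circle onto which $\alpha$ and $\beta$ are identified, it is a disc. Finally, being a horizontal surgery curve is a condition on a neighbourhood of the curve inside the branched surface (\Cref{defn:orisurcurve} and \Cref{defn:hsurcurve}): disjointness from triple points, orientation- and coorientation-preservingness, and the prescribed orientation and coorientation patterns of the branch-locus arcs met. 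Because $\alpha$ misses $\mathrm{Fix}(\iota)$, the map $B\to B/\langle\iota\rangle$ restricts to a homeomorphism of a neighbourhood of $\alpha$ in $B$ onto a neighbourhood of its image in $B/\langle\iota\rangle$, so every one of these local conditions transfers; hence $\partial(A/\langle\iota\rangle)$, the image of $\alpha$, is a horizontal surgery curve on $B/\langle\iota\rangle$, and $A$ is an equivariant connecting annulus.
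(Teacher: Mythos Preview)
Your overall strategy matches the paper's, and your treatments of condition (i) and of the second assertion are correct and in fact more detailed than the paper's (which dismisses the latter as ``straightforward''). The genuine gap --- which you rightly flag as ``the main obstacle'' --- is condition (ii). Your proposed argument assumes that when a complementary region $V$ is preserved by $\iota$, the branch locus of $V\to V/\langle\iota\rangle$ is the core of $V$ and that the cover is modeled on $(z,w)\mapsto(z^2,w)$. Neither follows from the hypotheses of the lemma as stated: the fixed link could a priori sit in $V$ as a nontrivial knot or a multi-component link, and there is no elementary reason the quotient must again be a solid torus or torus shell. Appealing to ``the situations of interest'' via \Cref{subsec:montesinos} proves only a special case. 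The paper resolves this cleanly by invoking the Smith conjecture for solid tori, proved in this setting by Waldhausen (\cite{Wal69}): a cusped solid torus or cusped torus shell can only double cover or double branched cover, with branch locus off the boundary, another cusped solid torus or cusped torus shell. This is the one nontrivial external input the lemma needs, and it replaces your hands-on modeling entirely.

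A minor point on (iii): your argument that orientations descend handles components of the branch locus fixed setwise (via the free involution of $S^1$), but for a pair of components swapped by $\iota$ there is no a priori reason the chosen orientation on one maps to the chosen orientation on the other. The paper observes that on components meeting a triple point the orientation is forced by (iii) and hence automatically $\iota$-equivariant, while on triple-point-free components one may flip orientations to arrange equivariance before passing to the quotient.
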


\begin{proof}
Note that our hypothesis includes the assumption that $\iota$ preserves the orientations on the components of $\brloc(B)$. Thus (iii) of \Cref{defn:vbs} (or \Cref{defn:avbs}) is clear for $B/\langle \iota \rangle$ with the induced orientation on its branch locus. 
(i) is also clear from the observation that sectors of $B$ double cover sectors of $B/\langle \iota \rangle$. 

For (ii), the complementary regions of $B$ double cover or double branched cover those of $B/\langle \iota \rangle$, where the branch locus of the cover does not intersect the boundary of the complementary regions. But by standard 3-manifold topology and the Smith conjecture (proved in this setting by Waldhausen (\cite{Wal69})), a cusped solid torus or cusped torus shell can only double cover or double branched cover another cusped solid torus or cusped torus shell respectively, if the branch locus of the cover lies away from the boundary, so $B/\langle \iota \rangle$ satisfies (ii) as well.

For the second part of the lemma, $A$ double branched covers $A/\langle \iota \rangle$ hence $A/\langle \iota \rangle$ is a disc. Either boundary component of $A$ maps to the boundary curve of $A/\langle \iota \rangle$ hence the latter is a horizontal surgery curve.
\end{proof}

In the setting of \Cref{lemma:inv}, we call $A$ an \textit{equivariant connecting annulus}. 
Let $\alpha$ and $\beta$ be the boundary components of $A$, and let $N$ be a neighborhood of $A$ as described above which is in addition preserved by $\iota$. Let $n$ be the number of arcs in the branch locus of $N \cap B$ which meet $\alpha$; this is the same as the number of arcs which meet $\beta$, because of the symmetry from $\iota$. Let $m$ be the number of arcs in the branch locus of the intersection between $B$ and the half regular neighborhood of $\alpha$ lying on the opposite side as $N$; again this is the same as that for $\beta$ by symmetry. Moreover, the fact that no component of the fixed point set of $\iota$ lies in $B$ implies that $n$ is even. 

We perform $(-2,-2)$ horizontal surgery along $A$ as described above (i.e. we take $j=\frac{n}{2}$) to get $B' \subset M'$. Note that the gluing map can be chosen to be $\iota$-equivariant, hence we can define an involution $\iota'$ on $M'$ by gluing together the involutions on $M \cut N$ and $N$. The objects $M'$, $\iota'$, and $B'$ then satisfy the assumptions of \Cref{lemma:inv}. See \Cref{fig:hsurinv}, now with the gray lines. In particular, we have an (almost) veering branched surface $B'/\langle \iota' \rangle$ on $M'/\langle \iota' \rangle$. 

For every $k \geq 0$, we can repeat this procedure $k$ times to get an (almost) veering branched surface $B'$ on 3-manifold $M'$, topologically obtained by doing $\frac{1}{-k}$ surgery along $\alpha$ in $M$, and which has an involution $\iota'$. Moreover, $B' \subset M'$ descends to an (almost) veering branched surface $B'/\langle \iota' \rangle \subset M'/\langle \iota' \rangle$. We will refer to the total operation going from $B$ to $B'$ as \textit{halved $\frac{1}{-k}$ horizontal surgery} along the equivariant connecting annulus $A$.

\begin{rmk} \label{rmk:hsurinvdownstairs}
The operation of going from $M/\langle \iota \rangle$ to $M'/\langle \iota' \rangle$ can be described directly as follows. Take the disc $A/\langle \iota \rangle \subset M/\langle \iota \rangle$, for which the branch locus of $\iota$ on $M/\langle \iota \rangle$ intersects in two points, cut it open, then reglue it with $k$ half twists. The image of the branch locus of $\iota$ after cutting and gluing will be the branch locus of $\iota'$. The direction of the half twists can be determined by orienting the boundary curve of $A/\langle \iota \rangle$ such that the arcs in the branch locus of $(N \cap B)/\langle \iota \rangle$ are cooriented coherently with it, and twisting the bottom half $(A \times \{0 \})/\langle \iota \rangle$ in the direction of its oriented boundary. 
\end{rmk}

The number of triple points of $B'$ is $kmn$ more than that of $B$, so the number of triple points of $B'/\langle \iota' \rangle$ is $\frac{kmn}{2}$ more than that of $B/\langle \iota \rangle$. If $M$ is oriented, then $M/\langle \iota \rangle$ is also oriented by our assumptions on $\iota$. In this case the $\frac{knm}{2}$ triple points added are all of the same color: red if the orientation we chose on $N$ matches that of $M$, blue otherwise.

Next, we explain a setting where we can do several of such halved horizontal surgeries concurrently. 

\begin{defn} \label{defn:hsursim}
Suppose $M, \iota,$ and $B$ satisfy the assumptions of \Cref{lemma:inv}. Let $A_1,...,A_s$ be equivariant connecting annuli, and let $N_i \cong A_i \times [0,1]$ be neighborhoods of $A_i$ as above. Let the boundary components of $A_i$ be $\alpha_i$ and $\beta_i$. Orient $\alpha_i$ and $\beta_i$ so that the arcs in the branch locus of $N_i \cap B$ are cooriented coherently. Meanwhile the orientations on those arcs also induce coorientations of $\alpha_i, \beta_i$ on $B$ (which are well-defined by (2) of \Cref{defn:hsurcurve}). 

Suppose that: 
\begin{enumerate}
    \item $\alpha_i$ and $\alpha_j$ are disjoint for all $i,j$, and $\beta_i$ and $\beta_j$ are disjoint for all $i,j$
    \item $\alpha_i$ and $\beta_j$ intersect transversely in $B$ for all $i,j$.
    \item At each intersection point between $\alpha_i$ and $\beta_j$, $A_i$ and $A_j$ locally lie on different sides of $B$.
    \item At each intersection point between $\alpha_i$ and $\beta_j$, the orientation of $\alpha_i$ is incoherent with the coorientation of $\beta_j$, and the orientation of $\beta_j$ is incoherent with the coorientation of $\alpha_i$
\end{enumerate}

Then we call $\{A_i\}$ a \textit{system of equivariant connecting annuli}.
\end{defn}

Orient each $N_i$ as above. (3) and (4) of \Cref{defn:hsursim} implies that these orientations match up where they overlap. For every $k_i \geq 0, i=1,...,s$, we can perform halved $\frac{1}{-k_i}$ horizontal surgeries along each $A_i$ simultaneously by cutting out each $N_i$ and gluing it back in with a map that is identity on $A_i \times \{0\}$, shifts the branches on $A_i \times \{1\}$ by $\frac{k_i}{2}$ cycles, and sends the meridian $\mu_i$ to a curve of isotopy class $\mu_i - k_i \alpha_i$, such that the arcs in the branch locus of $N_i \cap B$ intersect minimally on smooth neighborhoods of $\alpha_i, \beta_i$. See \Cref{fig:hsursim} for an illustration near an intersection point of $\alpha_i$ and $\beta_j$. This gives us a branched surface $B'$ in another 3-manifold $M'$, topologically obtained by doing $\frac{1}{-k_i}$ surgeries along $\alpha_i$ on $M$. We call this operation a \textit{concurrent halved $\frac{1}{-k_i}$ horizontal surgery on the system $\{A_i\}$}.

\begin{figure}
    \centering
    \resizebox{!}{10cm}{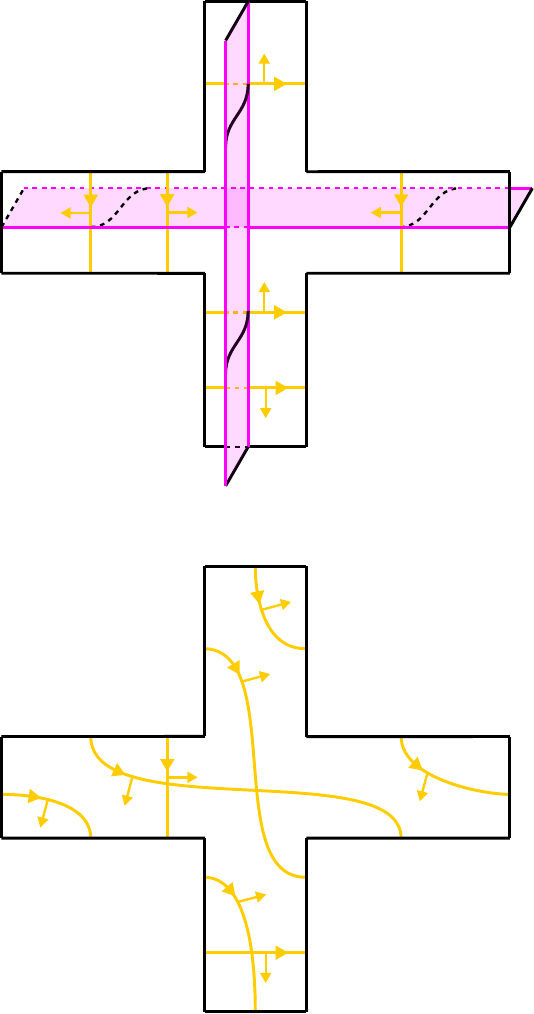}
    \caption{Doing concurrent halved horizontal surgery on a system of equivariant connecting annuli (in pink). In the bottom figure, to avoid clutter, we only draw the restriction of the branch locus to part of a surface carried by the branched surface.}
    \label{fig:hsursim}
\end{figure}

\begin{rmk}
It is possible to perform the surgeries along each $A_i$ in sequence, as opposed to concurrently, in the following sense: After performing surgery on, say, $A_1$ by cutting and regluing $N_1$, one can check that the collection of the remaining $A_i$ still forms a system of equivariant connecting annuli. However, the surgery along $A_1$ would introduce intersections of the branch locus with $\partial A_i$, and so for example if one wishes to keep track of the number of triple points as in \Cref{prop:hsursumtriplepoints} below, one would have to keep track of these additional intersections, which we believe would be a less clean approach.
\end{rmk}

Using the same argument as \Cref{prop:hsurinv}, one can show the following proposition.

\begin{prop} \label{prop:hsursim}
Let $\{A_i\}$ be a system of equivariant connecting annuli on an almost veering branched surface $B$. Let $B'$ be the branched surface obtained by doing concurrent halved $\frac{1}{-k_i}$ horizontal surgery on $\{A_i\}$. Then $B'$ is almost veering.

Furthermore, if $B$ is veering then $B'$ is veering as well.
\end{prop}

Also, we can again define an involution $\iota'$ on $M'$ by gluing together the involutions on $N_i$ and $M \cut N_i$, under which $M'$, $\iota'$, and $B'$ satisfy the assumptions of \Cref{lemma:inv}. To directly obtain $M'/\langle \iota' \rangle$ from $M/\langle \iota \rangle$, one performs $k_i$ half twists along the discs $A_i/\langle \iota \rangle$.

In terms of the 3-manifolds, concurrent halved horizontal surgery is just performing surgery along the disjoint curves $\alpha_i$, but on the level of the branched surfaces, the various surgeries interact with each other.

In particular, the fact that the surgery curves intersect each other adds extra terms to the number of triple points produced by the operation: Let $n_i$ be the number of arcs in the branch locus of $N_i \cap B$ which meet $\alpha_i$ (equivalently, that of $\beta_i$), let $m_i$ be the number of arcs in the branch locus of the intersection between $B$ and the half regular neighborhood of $\alpha_i$ on the opposite side as $N_i$ (equivalently, that of $\beta_i$), and let $q_{ij}$ be the number of intersection points between $\alpha_i$ and $\beta_j$ (equivalently, between $\alpha_j$ and $\beta_i$). 

Performing the surgery along $A_i$ on its own produces $k_i m_i n_i$ triple points. If we perform all the surgeries concurrently, then near each intersection point between $\alpha_i$ and $\beta_j$, $\frac{1}{4} n_i n_j k_i k_j$ additional triple points are produced. See \Cref{fig:hsursim} for an example, where $n_i=n_j=2, k_i=k_j=1$ and one additional triple point is produced. 

Adding these terms together gives the following formula.

\begin{prop} \label{prop:hsursumtriplepoints}
Let $C=[\frac{1}{4} q_{ij}n_in_j]$, $d=[n_im_i]$, and $k=[k_i]$. The number of triple points of $B'$ is $k^T (Ck+d)$ more than that of $B$, and the number of triple points of $B'/\langle \iota' \rangle$ is $\frac{1}{2} k^T (Ck+d)$ more than that of $B/\langle \iota \rangle$.
\end{prop}

As before, if $M$ is oriented, then the triple points added are all of the same color, red if the chosen orientations on $N_i$ match that of $M$, and blue otherwise. 

\section{Geodesic flows I: Genus zero orbifolds, Montesinos knots and links} \label{sec:genus0}

In this section we explain \Cref{constr:genus0}. We remind the reader of the setup. Let $N$ be the unit tangent bundle of the orbifold $S^2(p_1,...,p_n)$. Suppose that $e:=\sum \frac{1}{p_i}-n+2<0$, so that the geodesic flow on $N$ is Anosov. Let $c$ be the simple closed curve passing through the cone points of $S^2(p_1,...,p_n)$ in the order of their indices (taken mod $n$), and let $\overset{\leftrightarrow}{c}$ be the full lift of $c$. Let $M= N \backslash \overset{\leftrightarrow}{c}$. There is an involution $\iota$ of $N$ such that $N \to N/\langle \iota \rangle \cong S^3$ is a branched double cover, with branch locus of the cover equal to the Montesinos link $M(\frac{1}{p_1}+1, \frac{1}{p_2}-1, ...,\frac{1}{p_n}-1) \subset S^3$. 

Our goal in this section will be to construct a veering branched surface on $N$, for which the cores of the complementary regions are given by $\overset{\leftrightarrow}{c}$. This veering branched surface will be invariant under $\iota$ in the way described by \Cref{lemma:inv}, hence will descend to a veering branched surface on the Montesinos link complement $S^3 \backslash M(\frac{1}{p_1}+1, \frac{1}{p_2}-1, ...,\frac{1}{p_n}-1)$ where all complementary regions are cusped torus shells, thus giving us a dual veering triangulation. We will state this last result as a theorem.

\begin{thm} \label{thm:genus0}
For every $n, p_1,...,p_n$ such that $e:=\sum \frac{1}{p_i}-n+2<0$, there is a veering triangulation on the Montesinos link complement $S^3 \backslash M(\frac{1}{p_1}+1, \frac{1}{p_2}-1, ...,\frac{1}{p_n}-1)$.
\end{thm}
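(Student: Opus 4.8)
The plan is to produce, for each tuple $(p_1,\dots,p_n)$ with $e<0$, an $\iota$-invariant veering branched surface $B$ on $M = N\setminus\overset{\leftrightarrow}{c} = T^1 S^2(p_1,\dots,p_n)\setminus\overset{\leftrightarrow}{c}$ all of whose complementary regions are cusped torus shells and which is disjoint from the fixed-point set of $\iota$. Granting such a $B$, the theorem follows quickly: by \Cref{prop:vbs} the dual ideal triangulation of $B$ is a veering triangulation of $M$; and by \Cref{lemma:inv} the quotient $B/\langle\iota\rangle$ is a veering branched surface on $S^3\setminus M(\tfrac{1}{p_1}+1,\tfrac{1}{p_2}-1,\dots,\tfrac{1}{p_n}-1)$, whose complementary regions are again cusped torus shells (a cusped torus shell can only double branched cover a cusped torus shell when the branch locus misses its boundary, by the argument in the proof of \Cref{lemma:inv}), so a second application of \Cref{prop:vbs} to $B/\langle\iota\rangle$ yields the desired veering triangulation. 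Thus the whole content of the theorem is the construction of $B$.

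I would construct $B$ by induction on $n$ and on $\sum p_i$, starting from a finite list of base cases and climbing up via halved horizontal surgeries. A convenient set of base cases is the tuples minimal under the moves ``increase one $p_i$ by $1$'' and ``append a cone point of order $2$'', namely $(2,3,7)$, $(2,4,5)$, $(3,3,4)$, $(2,2,2,3)$ and $(2,2,2,2,2)$; for each of these one writes down by hand an explicit $\iota$-equivariant veering branched surface on the corresponding $T^1 S^2(\dots)\setminus\overset{\leftrightarrow}{c}$ with cusped-torus-shell complementary regions whose cores drill back to $\overset{\leftrightarrow}{c}$ (these can be cross-checked against the veering census). For the inductive step, inside the already-built $B$ one locates an equivariant connecting annulus $A$ whose boundary curves are horizontal surgery curves --- or, when several cone points must be altered at once, a system of such annuli in the sense of \Cref{defn:hsursim} --- positioned so that halved $\tfrac{1}{-k}$ horizontal surgery along $A$ (respectively concurrent halved horizontal surgery on the system) turns $M$ into the $T^1 S^2(\dots)\setminus\overset{\leftrightarrow}{c}$ of the next tuple. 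By \Cref{prop:hsurinv} (and its concurrent analogue) the resulting branched surface is almost veering; since such surgery creates no new annulus or M\"obius-band sectors (the remark in the proof of \Cref{prop:hsur}), it is in fact still veering; its complementary regions keep their topology, hence remain cusped torus shells; and $\iota$-equivariance is preserved by construction. Iterating until we reach $(p_1,\dots,p_n)$ produces the required $B$.

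The hard part is the explicit local geometry underlying this induction. One must (a) actually exhibit the $\iota$-equivariant veering branched surfaces for the base cases, with the complementary-region cores being exactly the full lift $\overset{\leftrightarrow}{c}$ of the cone-point curve; and (b) for each move, find the correct equivariant connecting annulus (or system) inside $B$ near the cone point(s) involved, check that its boundary curves satisfy \Cref{defn:hsurcurve} and that several annuli used simultaneously genuinely satisfy the compatibility conditions of \Cref{defn:hsursim}, and --- the crux --- verify that the topological effect of the halved surgery is precisely to pass from $T^1 S^2(\dots,p_i,\dots)\setminus\overset{\leftrightarrow}{c}$ to $T^1 S^2(\dots,p_i+1,\dots)\setminus\overset{\leftrightarrow}{c}$ (or to append a new order-$2$ cone point) while keeping $\overset{\leftrightarrow}{c}$ the full lift. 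This requires a careful reading of the branched surface over a neighborhood of a cone point, of how the order-$p$ and order-$(p+1)$ pictures differ by a half-twist in the $\iota$-quotient (compare \Cref{fig:monteblock}), and of the induced change of Seifert/Montesinos data; it is here that one splits into cases according to whether a cone point has order $2$ or order $\ge 3$ and whether $n=3$ or $n\ge 4$. Once these local models are in place, the global bookkeeping and the two appeals to \Cref{prop:vbs} are routine. As an alternative to tracking sectors, one may note that $c$ fills $S^2(p_1,\dots,p_n)$, so $M$ is hyperbolic by \Cref{thm:fulllifthyp}, and then \Cref{prop:avbshyp} upgrades ``almost veering'' to ``veering'' automatically.
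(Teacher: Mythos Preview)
Your overall architecture matches the paper: build an $\iota$-invariant (almost) veering branched surface on $T^1 S^2(p_1,\dots,p_n)$ with complementary-region cores $\overset{\leftrightarrow}{c}$, use \Cref{thm:fulllifthyp} and \Cref{prop:avbshyp} to get veering, quotient by $\iota$, and apply \Cref{prop:vbs}. The ``increase one $p_i$'' inductive move via halved horizontal surgery is also exactly what the paper does.

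The genuine gap is the ``append a cone point of order $2$'' move. Halved horizontal surgery (and its concurrent version) performs $\tfrac{1}{-k}$ Dehn surgery along a curve in a fixed $3$-manifold; on the Montesinos side this adds half-twists to an existing rational tangle, changing a $p_i$ to $p_i+k$. It does not change $n$. No mechanism in the surgical toolkit you invoke produces an extra tangle, so your induction cannot pass from $n$ cone points to $n+1$. With only the ``increase $p_i$'' move your base-case list is not finite: you would need a separate base for every $n$, and the five tuples you list do not suffice.

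The paper sidesteps this by taking a different kind of base. For each $n$ it starts at the \emph{Euclidean} orbifold $S^2(2,\dots,2)$ (with $e=0$) --- or, for $n=3$, at $S^2(2,3,6)$, $S^2(2,4,4)$, $S^2(3,3,3)$ --- and builds an \emph{almost} veering branched surface $B_0$ there directly: for $n=3$ by lifting the midway lines of the $G_2$, diagonal, or hexagonal grid on $\mathbb{R}^2$; for $n\ge 4$ by a uniform ``movie of train tracks'' construction on the solid torus $T^1 S|_Q$ over a fundamental polygon $Q$. These $B_0$ are not veering (they have annulus sectors), but a system of equivariant connecting annuli is located in each, and a single application of concurrent halved horizontal surgery with coefficients $(k_1,\dots,k_n)$ lands on $T^1 S^2(2+k_1,\dots,2+k_n)$ for the $n\ge 4$ cases (and analogously for the three $n=3$ families). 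So rather than a finite list of hyperbolic base cases plus two moves, the paper has one Euclidean base per $n$, given by a uniform explicit recipe, plus a single surgery step. The real content you defer to ``explicit local geometry'' is precisely this uniform construction of $B_0$ and of the annulus system on it; that is what replaces your missing ``append'' move.
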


Now, this theorem by itself is not new. It is already known that these Montesinos link complements admit veering triangulations just from the fact that they are fibered with fully-punctured pseudo-Anosov monodromy (by \cite[Theorem E]{CD20}). Instead, the more significant point here is that our construction of the veering branched surfaces is entirely explicit, whereas it is not clear what the monodromies of these fiberings are, let alone periodic folding sequences of train tracks of the monodromies, which is what one needs in order to construct the veering triangulation as in \cite{Ago11}. We also point out that by \Cref{thm:sfspageod}, the Anosov flows on $T^1 S^2(p_1,...,p_n)$ that correspond to the double covers of the veering triangulations in \Cref{thm:genus0} must be the geodesic flow.

In \Cref{sec:table}, we will compile the IsoSig codes of those veering triangulations in \Cref{thm:genus0} which are present in the veering triangulation census \cite{VeeringCensus}.

We will divide the proof of \Cref{thm:genus0} into four cases. Notice that $(S^2(p_1,...,p_n),c) \cong (S^2(p_{\sigma(1)},...,p_{\sigma(n)}),c)$ for cyclic permutations and reversals $\sigma \in S_n$. So when $n=3$, we can always assume that $p_1 \leq p_2 \leq p_3$. With this arranged, all the possibilities of $p_1,p_2,p_3$ for which $e<0$ fall under three cases: (1) $p_1=2, p_2=3, p_3>6$, (2) $p_1=2, (p_2,p_3)>(4,4)$, and (3) $(p_1,p_2,p_3)>(3,3,3)$, where we use the lexicographic ordering on tuples of integers.

For $n=4$, we do not attempt to rearrange the $p_i$, and simply note that $e<0$ is equivalent to $(p_1,p_2,p_3,p_4) > (2,2,2,2)$. For $n \geq 5$, there are no restrictions on $p_i$. We group all the possibilities for $n \geq 4$ together as case (4).

Each case proceeds by first constructing an almost veering branched surface on some unit tangent bundle, then locating an appropriate system of equivariant connecting annulus, and performing concurrent halved horizontal surgery on the system to get veering branched surfaces on the family of unit tangent bundles that we want, which then descend, under the involution, to veering branched surfaces on the family of Montesinos link complements that we are considering. The construction for cases (1)-(3) are very similar to each other. Meanwhile case (4) will be done via a different approach for the purposes of \Cref{sec:highergenus}.

\subsection{Case 1: $n=3, p_1=2, p_2=3, p_3 > 6$} \label{subsec:g2}

We will first construct a branched surface on $T^1 \mathbb{R}^2$. Here we put the usual Euclidean coordinates $x,y$ on $\mathbb{R}^2$ and set $T^1 \mathbb{R}^2 \cong \mathbb{R}^2_{x,y} \times (\mathbb{R}/2 \pi \mathbb{Z})_\theta$ where $\tan \theta$ is the slope. 

Consider the $G_2$ grid on $\mathbb{R}^2$. For concreteness, say, this is given by taking the union of the lines
\begin{align*}
&\{ y=3n \}_{n \in \mathbb{Z}} \cup \{ y=\frac{1}{\sqrt{3}}x+2n \}_{n \in \mathbb{Z}} \cup \{ y=\sqrt{3}x+6n \}_{n \in \mathbb{Z}}\\
\cup &\{x=\sqrt{3} n \}_{n \in \mathbb{Z}} \cup \{ y=-\frac{1}{\sqrt{3}}x+2n \}_{n \in \mathbb{Z}} \cup \{ y=-\sqrt{3}x+6n \}_{n \in \mathbb{Z}}
\end{align*}

See the black lines in \Cref{fig:g2gridsur}.

\begin{figure}
    \centering
    \resizebox{!}{7.5cm}{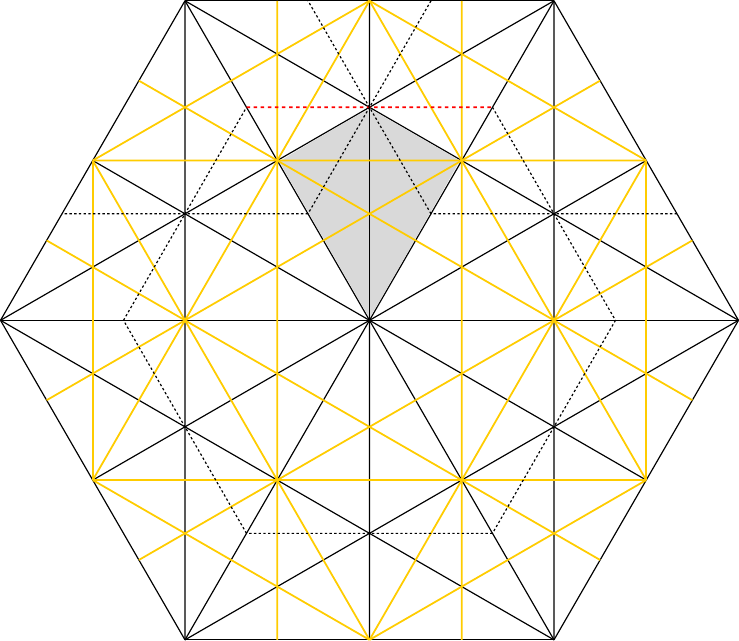}
    \caption{Using the $G_2$ grid to construct an almost veering branched surface on $T^1 S^2(2,3,6)$.}
    \label{fig:g2gridsur}
\end{figure}

Then consider the midway lines of this grid, i.e. lines that lie in the middle of each pair of adjacent parallel lines. Concretely, these are
\begin{align*}
&\{ y=3n+\frac{3}{2} \}_{n \in \mathbb{Z}} \cup \{ y=\frac{1}{\sqrt{3}}x+2n+1 \}_{n \in \mathbb{Z}} \cup \{ y=\sqrt{3}x+6n+3 \}_{n \in \mathbb{Z}}\\
\cup &\{x=\sqrt{3} n+\frac{\sqrt{3}}{2} \}_{n \in \mathbb{Z}} \cup \{ y=-\frac{1}{\sqrt{3}}x+2n+1 \}_{n \in \mathbb{Z}} \cup \{ y=-\sqrt{3}x+6n+3 \}_{n \in \mathbb{Z}}
\end{align*}

See the yellow lines in \Cref{fig:g2gridsur}.

Now construct a branched surface in $T^1 \mathbb{R}^2$ by taking the horizontal planes $\{\theta=\frac{(2i-1)\pi}{12} \}$, then attaching infinite strips of the form:
\begin{align*}
&\{ y=3n+\frac{3}{2} , \theta \in [-\frac{\pi}{12}, \frac{\pi}{12}] \cup [\frac{11\pi}{12}, \frac{13\pi}{12}] \}_{n \in \mathbb{Z}}\\
\cup &\{ y=\frac{1}{\sqrt{3}}x+2n+1 , \theta \in [\frac{\pi}{12}, \frac{3\pi}{12}] \cup [\frac{13\pi}{12}, \frac{15\pi}{12}] \}_{n \in \mathbb{Z}}\\
\cup &\{ y=\sqrt{3}x+6n+3, \theta \in [\frac{3\pi}{12}, \frac{5\pi}{12}] \cup [\frac{15\pi}{12}, \frac{17\pi}{12}] \}_{n \in \mathbb{Z}}\\ 
\cup &\{x=\sqrt{3} n+\frac{\sqrt{3}}{2}, \theta \in [\frac{5\pi}{12}, \frac{7\pi}{12}] \cup [\frac{17\pi}{12}, \frac{19\pi}{12}] \}_{n \in \mathbb{Z}}\\
\cup &\{ y=-\sqrt{3}x+6n+3, \theta \in [\frac{7\pi}{12}, \frac{9\pi}{12}] \cup [\frac{19\pi}{12}, \frac{21\pi}{12}] \}_{n \in \mathbb{Z}}\\
\cup &\{ y=-\frac{1}{\sqrt{3}}x+2n+1, \theta \in [\frac{9\pi}{12}, \frac{11\pi}{12}] \cup [\frac{21\pi}{12}, \frac{23\pi}{12}] \}_{n \in \mathbb{Z}}\\ 
\end{align*}
and combing the lines of attachment such that the maw coorientation of $\{ y=(\tan \theta_0) x+c_0, \theta=\theta_0+\frac{\pi}{12}\}$ is given by $-\sin \theta_0 \frac{\partial}{\partial x}+\cos \theta_0 \frac{\partial}{\partial y}$ and that of $\{ y=(\tan \theta_0) x+c_0, \theta=\theta_0-\frac{\pi}{12}\}$ is given by $\sin \theta_0 \frac{\partial}{\partial x}-\cos \theta_0 \frac{\partial}{\partial y}$. We also orient $\{ y=(\tan \theta_0) x+c_0, \theta=\theta_0 \pm \frac{\pi}{12}\}$ by $\cos \theta_0 \frac{\partial}{\partial x}+\sin \theta_0 \frac{\partial}{\partial y}$. 

Up to a small perturbation, we can arrange for these infinite strips to be transverse to the fiber direction $\frac{\partial}{\partial \theta}$. We call the resulting branched surface $\widetilde{B_0}$. Intuitively, we are attaching strips that lift each oriented midway line and which lie between the horizontal layers. This makes $\widetilde{B_0}$ into a `wasp nest' with holes where the full lifts of the grid lines live. See \Cref{fig:genus0}.

\begin{figure}
    \centering
    \resizebox{!}{5cm}{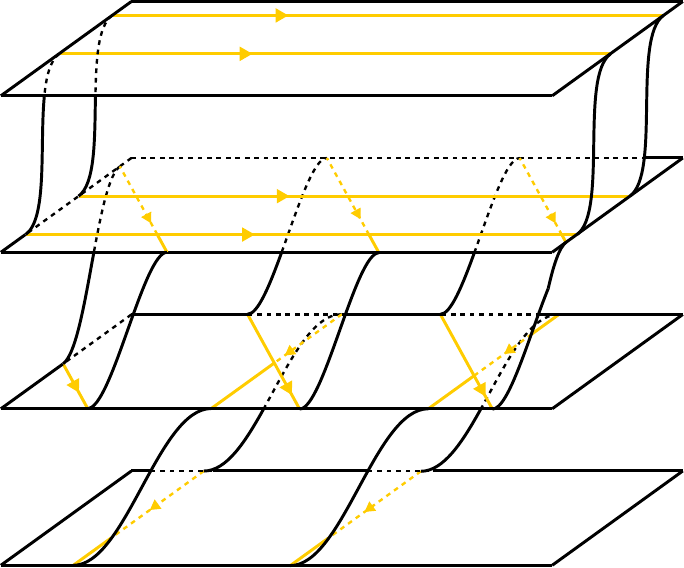}
    \caption{Rough picture of $\widetilde{B_0}$.}
    \label{fig:genus0}
\end{figure}

Now consider the subgroup $G^+$ in $\Isom^+(\mathbb{R}^2) \cong \mathbb{R}^2 \rtimes SO(2)$ generated by products of even numbers of reflections across the grid lines. $G^+$ quotients $\mathbb{R}^2$ down to the orbifold $S^2(2,3,6)$, hence its lifted action quotients $T^1 \mathbb{R}^2$ down to $T^1 S^2(2,3,6)$. In fact, it is easy to see that the quotient map $T^1 \mathbb{R}^2 \to T^1 S^2(2,3,6)$ is a covering. Meanwhile, our construction of $\widetilde{B_0}$ can be made to be equivariant under $G^+$; one only has to do the combings and perturbations equivariantly. Hence $\widetilde{B_0}$ descends to a branched surface $B_0$ on $T^1 S^2(2,3,6)$. The orientations we defined on the components of $\brloc(\widetilde{B_0})$ are also equivariant under $G^+$, hence they descend to orientations on the components of $\brloc(B_0)$.

\begin{claim} \label{lemma:genus0avbs}
$B_0$ is an almost veering branched surface. The cores of its complementary regions are given by $\overset{\leftrightarrow}{c}$.
\end{claim}

\begin{proof}
For the first statement, we check the conditions in \Cref{defn:avbs} one by one.

The sectors of $\widetilde{B_0}$ cover those of $B_0$. One can check that the former consists of quadrilaterals lying on the planes $\{\theta=\theta_0\}$ and the infinite strips. The quadrilaterals are topologically discs hence can only homeomorphically cover quadrilateral sectors in $B_0$. Similarly, the infinite strips can only cover annulus or M\"obius band sectors in $B_0$. Hence (i) of \Cref{defn:avbs} is satisfied by $B_0$. In fact, we do not get M\"obius band sectors in $B_0$ since $G^+$ preserves the fiber direction $\frac{\partial}{\partial \theta}$ hence maps the infinite strips to themselves in orientation preserving ways. 

Similarly, the complementary regions of $\widetilde{B_0}$ cover those of $B_0$. The former consists of 2-cusped infinite cylinders, i.e. $D^2 \times \mathbb{R}$ with two cusp lines running along the $\mathbb{R}$ direction. These can only cover 2-cusped solid tori. Hence (ii) of \Cref{defn:avbs} is satisfied by $B_0$. Similarly as above, since $G^+$ preserves the fiber direction, the 2-cusped solid tori must in fact be untwisted, i.e. there are two cusp circles on each solid torus.

Finally, the orientations on the components of $\brloc(\widetilde{B_0})$ satisfy (iii) of \Cref{defn:avbs} at each triple point, and so (iii) holds for $B_0$ as well.

For the second statement, notice that the grid lines in $\mathbb{R}^2$ descend to the curve $c$ in $S^2(2,3,6)$. Hence the cores of the complementary regions of $\widetilde{B_0}$, which are given by full lifts of the grid lines, descend to the full lift of $c$. From our argument showing (ii) above, these are the cores of the complementary regions of $B_0$.
\end{proof}

Now let $G$ be the subgroup in $\Isom(\mathbb{R}^2) \cong \mathbb{R}^2 \rtimes O(2)$ generated by all products of reflections across the grid lines. $G^+$ is a index $2$ subgroup of $G$, and $G/G^+=:\langle \iota \rangle$ acts on $S^2(2,3,6)$ by reflection across $c$. Hence $\iota$ acts on $T^1 S^2(2,3,6)$ by the lift of this reflection across $c$. In particular the fixed point set of $\iota$ is $\overset{\leftrightarrow}{c}$, which by \Cref{lemma:genus0avbs} above, is the core of the complementary regions of $B_0$. Thus by \Cref{prop:montelink}, \Cref{prop:hsurinv}, and \Cref{lemma:inv}, $B_0/\langle \iota \rangle$ is an almost veering branched surface on $S^3$, with the cores of its complementary regions given by the Montesinos link $M(\frac{3}{2}, -\frac{2}{3}, -\frac{5}{6})$.

Next we will locate a system of equivariant connecting annuli of $B_0$. To do so, we work in $T^1 \mathbb{R}^2$. For each vertex $(x_0,y_0)$ in the orbit of $(0,0)$ under $G$, consider the hexagon $H$ in $\mathbb{R}^2$ with vertices $(x_0 \pm \frac{4 \sqrt{3}}{3},y_0), (x_0 \pm \frac{2 \sqrt{3}}{3}, y_0 \pm 2)$, and consider the torus that is $T^1 \mathbb{R}^2|_H$. The hexagon $H$ for the point $(0,0)$ is drawn in dotted lines in the center of \Cref{fig:g2gridsur}.

Let $E$ be the edge of $H$ between $(x_0 - \frac{2 \sqrt{3}}{3}, y_0 + 2)$ and $(x_0 + \frac{2 \sqrt{3}}{3}, y_0 + 2)$, which is the red segment in \Cref{fig:g2gridsur}. We draw $T^1 \mathbb{R}^2|_E$ in \Cref{fig:g2section}, where the arrows on the left represent the vertical $\theta$ coordinate by the directions in \Cref{fig:g2gridsur}. Notice that $E$ is the fundamental domain of $H$ under the action of the stabilizer of $(x_0,y_0)$ in $G^+$, and that $H$ is the union of 6 translates of $E$. Hence the whole torus $T^1 \mathbb{R}^2|_H$ can be obtained by taking 6 copies of \Cref{fig:g2section}, and gluing them up cyclically with an upward shift of $\frac{\pi}{3}$ when moving to the copy on the right. Equivalently, when we quotient by $G^+$ to $T^1 S^2(2,3,6)$, the image of $T^1 \mathbb{R}^2|_H$ can be obtained by taking just \Cref{fig:g2section} and gluing the left and right sides with a shift of $\frac{\pi}{3}$. 

\begin{figure}
    \centering
    \resizebox{!}{8cm}{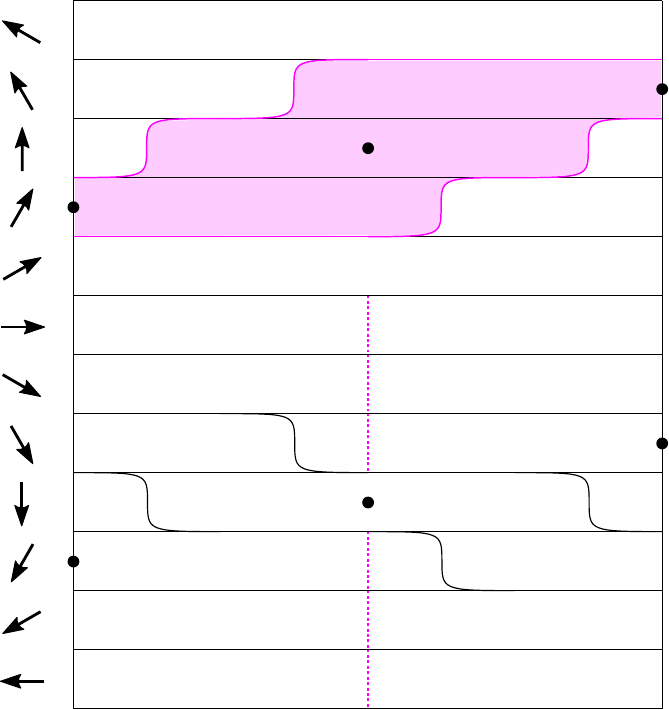}
    \caption{The intersection of $T^1 \mathbb{R}^2|_E$ with $\widetilde{B_0}$. The arrows on the left represent the vertical $\theta$ coordinate by the directions in \Cref{fig:g2gridsur}. The pink region defines a connecting annulus $\widetilde{A}$ of $\widetilde{B_0}$. The black dots denote where the full lifts of the grid lines intersect $T^1 \mathbb{R}^2|_E$. The pink vertical dotted lines denote where the other translates of $\widetilde{A}$ intersect $T^1 \mathbb{R}^2|_E$.}
    \label{fig:g2section}
\end{figure}

Consider the pink region in \Cref{fig:g2section}. By taking the union of these regions in the 6 copies of $T^1 \mathbb{R}^2|_E$ that form $T^1 \mathbb{R}^2|_H$, we can define an annulus $\widetilde{A}$ on $T^1 \mathbb{R}^2|_H$. It is straightforward to check that $\widetilde{A}$ is a connecting annulus between a pair of parallel horizontal surgery curves in $\widetilde{B_0}$, and that $\widetilde{A}$ is preserved by the stabilizer of $(x_0,y_0)$ in $G$. Hence upon quotienting by $G^+$, $\widetilde{A}$ descends to an equivariant connecting annulus $A$ of $B_0$.

$A$ is embedded in $T^1 S^2(2,3,6)$ since its preimages, which are the $\widetilde{A}$ constructed for the different orbits of $(0,0)$, are disjoint from one another. We demonstrate this disjointness by drawing pink dotted lines on \Cref{fig:g2section} where the other preimages come through $T^1 \mathbb{R}^2|_H$, and seeing that they lie away from the shaded region. Hence $A$ is a (trivial) system of equivariant connecting annuli, and one can read off from \Cref{fig:g2section} that, in the notation of \Cref{prop:hsursumtriplepoints}, $n=2, m=2, q=0$, hence $C=[0], d=[4]$.

Now for $k > 0$, we can apply concurrent halved $\frac{1}{-k}$ horizontal surgery on this system to get an almost veering branched surface $B_k$ on 3-manifold $M_k$ with involution $\iota_k$. We analyze what 3-manifold $M_k$ is by analyzing the operation at the level of $T^1 S^2(2,3,6)/\langle \iota \rangle$, using \Cref{rmk:hsurinvdownstairs}. In \Cref{fig:monteblocksur}, we draw half of our connecting annulus $A$ in the picture of \Cref{fig:monteblock}. Taking the quotient as in \Cref{fig:monteblock}, we see that $A/\langle \iota \rangle$ is a disc around the SE and SW strands of the rational tangle $\frac{1}{6}$, with the boundary oriented clockwise when viewed from above. Hence when we do $\frac{1}{-k}$ surgery, we add $k$ half twists to arrive at the Montesinos link $M(\frac{3}{2}, -\frac{2}{3}, \frac{1}{6+k}-1)$. More precisely, we mean that $B_k/\langle \iota_k \rangle$ sits inside $S^3$ with the cores of its complementary regions given by $M(\frac{3}{2}, -\frac{2}{3}, \frac{1}{6+k}-1)$. Taking the branched double cover, we see that $M_k=T^1 S^2(2,3,6+k)$ and the cores of the complementary regions of $B_k$ are given by the full lift of the curve $c$ on $S^2(2,3,6+k)$.

\begin{figure}
    \centering
    \resizebox{!}{4.5cm}{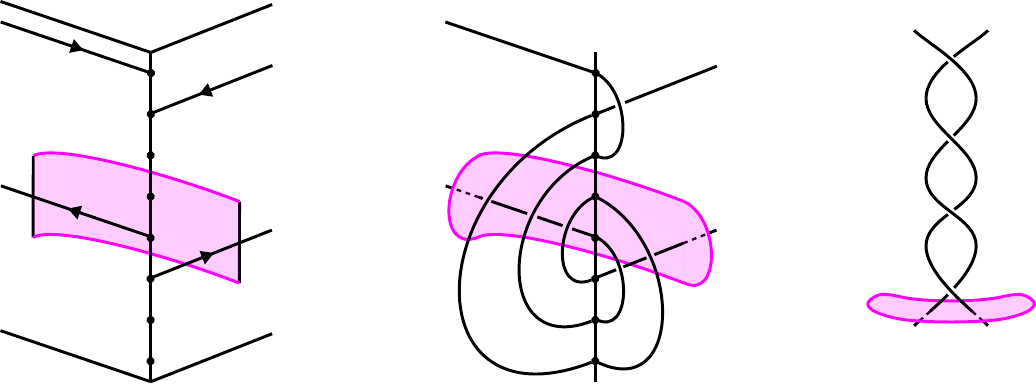}
    \caption{Performing halved $\frac{1}{-k}$ horizontal surgery on an annulus around a cone point adds $k$ half twists to the corresponding rational tangle.}
    \label{fig:monteblocksur}
\end{figure}

An alternative way to reach this conclusion would be to first argue that $M_k$ is the Seifert fibered space $S^2((2,3), (3,-2),(6+k,-5-k))$ by working out the effects of surgery along $\partial A$ as a curve on $T^1 \mathbb{R}^2|_H /G^+$, which is the boundary of a fibered neighborhood around the singular orbit above the cone point of order $6$. Then using \Cref{thm:vtpAcorr} and \Cref{thm:sfspageod}, one can work out the cores of the complementary regions of $B_k$ by using the fact that an orbit of the geodesic flow is uniquely determined by its image on the orbifold, and tracing out the image of the cores on $S^2(2,3,6+k)$ directly.

By \Cref{thm:fulllifthyp}, $T^1 S^2(2,3,6+k) \backslash \overset{\leftrightarrow}{c}$ is hyperbolic, hence $B_k$ is veering by \Cref{prop:avbshyp}, so $B_k/\langle \iota_k \rangle$ on $S^3 \backslash M(\frac{3}{2}, -\frac{2}{3}, \frac{1}{6+k}-1)$ is veering as well. Taking its dual ideal triangulation, we have proven \Cref{thm:genus0} in this case.

We can count the number of tetrahedra and the number of blue/red edges in these veering triangulations: Take the orientation on $T^1 \mathbb{R}^2$ to be that given by $(\frac{\partial}{\partial x},\frac{\partial}{\partial y}, \frac{\partial}{\partial \theta}) $. The triple points on $\widetilde{B_0}$ are of the form in \Cref{fig:vbslocal} right, so the triple points in $B_0$ must all be red, and one can count that there are exactly $2$ of them. By \Cref{prop:hsursumtriplepoints}, concurrent halved $\frac{1}{-k}$ horizontal surgery on our system produces $4k$ blue triple points, so $B_k$ has $4k$ blue triple points and $2$ red triple points, and $B_k/\langle \iota_k \rangle$ has $2k$ blue triple points and $1$ red triple point. We conclude that the dual veering triangulation on $S^3 \backslash M(\frac{3}{2}, -\frac{2}{3}, \frac{1}{6+k}-1)$ has $2k+1$ tetrahedra, $2k$ blue edges and $1$ red edge.

\subsection{Case 2: $n=3, p_1=2, (p_2,p_3)>(4,4)$} \label{subsec:diag}

The strategy here is the same as the last case, so we will be more brief. First we will construct a branched surface on $T^1 \mathbb{R}^2 \cong \mathbb{R}^2_{x,y} \times (\mathbb{R}/2 \pi \mathbb{Z})_\theta$. This time, consider the diagonal grid on $\mathbb{R}^2$. This is given by taking the union of the lines
\begin{align*}
\{ y=2n \}_{n \in \mathbb{Z}} \cup \{ y= x+4n \}_{n \in \mathbb{Z}} \cup \{x=2n \}_{n \in \mathbb{Z}} \cup \{ y= -x+4n \}_{n \in \mathbb{Z}} 
\end{align*}

See the black lines in \Cref{fig:diaggridsur}.

\begin{figure}
    \centering
    \resizebox{!}{7.5cm}{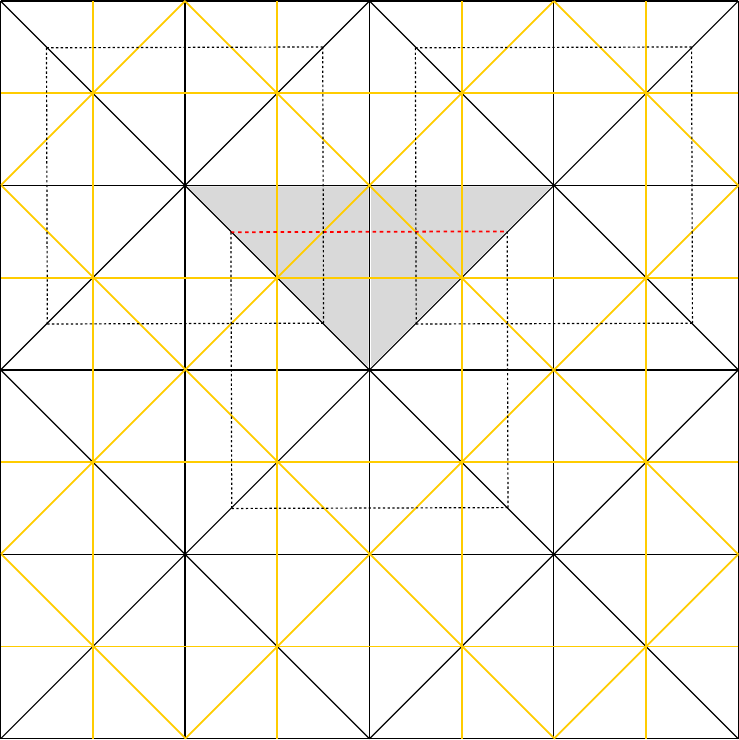}
    \caption{Using the diagonal grid to construct an almost veering branched surface on $T^1 S^2(2,4,4)$.}
    \label{fig:diaggridsur}
\end{figure}

Similar to \Cref{subsec:g2}, we construct a branched surface $\widetilde{B_0}$ on $T^1 \mathbb{R}^2$ by taking the horizontal planes at the levels inbetween the slopes of the grid line and attaching infinite strips lying over the midway lines (in yellow in \Cref{fig:diaggridsur}). We let the interested reader fill in precise descriptions of the surfaces that make up $\widetilde{B_0}$ as in \Cref{subsec:g2} for themselves. 

As in \Cref{subsec:g2}, $\widetilde{B_0}$ can be quotiented down to an almost veering branched surface $B_0$ on $T^1 S^2(2,4,4)$ with the cores of its complementary regions given by $\overset{\leftrightarrow}{c}$, and to an almost veering branched surface $B_0/\langle \iota \rangle $ on $S^3$ with the cores of its complementary regions given by the Montesinos link $M(-\frac{1}{2}, \frac{1}{4}, \frac{1}{4})$.

Next we will locate a system of equivariant connecting annuli in $B_0$. This is the point where this case becomes more interesting than the last one, since this time the system has 2 annuli. For each vertex $(x_0,y_0)$ in the orbit of $(0,0)$ under $G$, consider the square $H$ in $\mathbb{R}^2$ with vertices $(x_0 \pm \frac{3}{2},y_0 \pm \frac{3}{2})$ and consider the torus that is $T^1 \mathbb{R}^2|_H$. The square $H$ for the point $(0,0)$ is drawn in dotted lines in the center of \Cref{fig:diaggridsur}.

Let $E$ be the edge of $H$ between $(x_0 - \frac{3}{2}, y_0 + \frac{3}{2})$ and $(x_0 + \frac{3}{2}, y_0 + \frac{3}{2})$, which is the red segment in \Cref{fig:diaggridsur}. 
We draw $T^1 \mathbb{R}^2|_E$ in \Cref{fig:diagsection}, where the black lines in the interior denote its intersection with the branched surface $\widetilde{B_0}$. Similarly as in the last case, $T^1 \mathbb{R}^2|_H$ can be obtained by taking 4 copies of \Cref{fig:diagsection}, and gluing them up cyclically with a shift of $\frac{\pi}{2}$. 

\begin{figure}
    \centering
    \resizebox{!}{6cm}{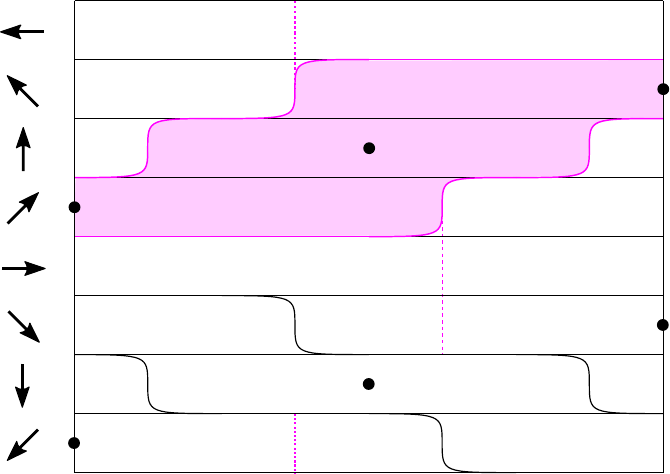}
    \caption{The intersection of $T^1 \mathbb{R}^2|_E$ with $\widetilde{B_0}$. The pink region defines a connecting annulus of $\widetilde{B_0}$. The black dots denote where the full lifts of the grid lines intersect $T^1 \mathbb{R}^2|_E$. The pink vertical dotted lines denote where the other connecting annuli intersect $T^1 \mathbb{R}^2|_E$.}
    \label{fig:diagsection}
\end{figure}

We can find an annulus $\widetilde{A_1}$ on $T^1 \mathbb{R}^2|_H$ by taking the union of the pink regions in the 4 copies of \Cref{fig:diagsection}. It is straightforward to check that $\widetilde{A_1}$ is a connecting annulus between a pair of parallel horizontal surgery curves in $\widetilde{B_0}$, and that $\widetilde{A_1}$ is preserved by the stabilizer of $(x_0,y_0)$ in $G$. Hence upon quotienting by $G^+$, $\widetilde{A_1}$ descends to an equivariant connecting annulus $A_1$ of $B_0$.

But now notice that everything done above can be repeated for a vertex $(x_0,y_0)$ in the orbit of $(2,2)$ under $G$ instead. (In fact, there is an element of $\Isom(\mathbb{R}^2)$ sending $(0,0)$ to $(2,2)$ which preserves the whole setup.) In particular, we obtain another connecting annulus $\widetilde{A_2}$ of $\widetilde{B_0}$ which descends to an equivariant connecting annulus $A_2$ of $B_0$.

Each $A_i$ can be seen to be embedded in $T^1 S^2(2,4,4)$, but $A_1$ meets $A_2$ transversely along their boundaries. This can be deduced from looking at \Cref{fig:diagsection}, where we drew pink dotted lines where the preimages of $A_2$ come through one of the $T^1 \mathbb{R}^2|_E$ defining $\widetilde{A_1}$. It is straightforward to check that these intersection points satisfy (4) of \Cref{defn:hsursim}, hence $A_1, A_2$ is a system of equivariant connecting annuli, and one can read off from \Cref{fig:diagsection} that, in the notation of \Cref{prop:hsursumtriplepoints}, $n_1=n_2=2, m_1=m_2=2, q_{11}=q_{22}=0, q_{12}=q_{21}=1$, hence $C=\begin{bmatrix} 0 & 1 \\ 1 & 0 \end{bmatrix}, d=\begin{bmatrix} 4 \\ 4 \end{bmatrix}$.

Now for $(k_1,k_2) > (0,0)$, we can apply concurrent halved $(\frac{1}{-k_1}, \frac{1}{-k_2})$ horizontal surgery on this system to get an almost veering branched surface $B_{(k_1,k_2)}$ on a 3-manifold $M_{(k_1,k_2)}$ with involution $\iota_{(k_1,k_2)}$. As in \Cref{subsec:g2}, one can see that $A_i/\langle \iota \rangle$ are discs around the base of the two rational tangles $\frac{1}{4}$, with the boundary oriented clockwise when viewed from above. Hence when we do $(\frac{1}{-k_1}, \frac{1}{-k_2})$ surgery, we add $k_1$ half twists to one tangle and $k_2$ half twists to another, to arrive at the Montesinos link $M(\frac{3}{2}, \frac{1}{4+k_1}-1, \frac{1}{4+k_2}-1)$. Taking the branched double cover, we see that $M_{(k_1,k_2)}=T^1 S^2(2,4+k_1,4+k_2)$ and the cores of the complementary regions of $B_{(k_1,k_2)}$ are given by the full lift of the curve $c$ on $S^2(2,4+k_1,4+k_2)$. 

By \Cref{thm:fulllifthyp}, $T^1 S^2(2,4+k_1,4+k_2) \backslash \overset{\leftrightarrow}{c}$ is hyperbolic, hence $B_{(k_1,k_2)}$ and thus $B_{(k_1,k_2)}/\langle \iota_{(k_1,k_2)} \rangle$ are veering. Taking the dual ideal triangulation of the latter, this proves \Cref{thm:genus0} in this case.

Again, we count the number of tetrahedra and the number of blue/red edges in these veering triangulations. We take the orientation on $T^1 \mathbb{R}^2$ to be that given by $(\frac{\partial}{\partial x},\frac{\partial}{\partial y}, \frac{\partial}{\partial \theta}) $. One can check that $B_0$ has $4$ red triple points. By \Cref{prop:hsursumtriplepoints}, concurrent halved $(\frac{1}{-k_1}, \frac{1}{-k_2})$ horizontal surgery on our system produces $2k_1k_2+4k_1+4k_2$ blue triple points, and so $B_{(k_1,k_2)}$ has $2k_1k_2+4k_1+4k_2$ blue triple points and $4$ red triple points, and $B_{(k_1,k_2)}/\iota_{(k_1,k_2)}$ has $k_1k_2+2k_1+2k_2$ blue triple points and $2$ red triple points. Dual to this, the veering triangulation on $S^3 \backslash M(\frac{3}{2}, \frac{1}{4+k_1}-1, \frac{1}{4+k_2}-1)$ has $k_1k_2+2k_1+2k_2+2$ tetrahedra, $k_1k_2+2k_1+2k_2$ blue edges and $2$ red edges.

\subsection{Case 3: $n=3, (p_1,p_2,p_3)>(3,3,3)$} \label{subsec:hex}

The strategy here is again the same. This time we use the hexagonal grid on $\mathbb{R}^2$. This is given by taking the union of the lines
\begin{align*}
\{ y=2n \}_{n \in \mathbb{Z}} \cup \{ y= \sqrt{3}x+4n \}_{n \in \mathbb{Z}} \cup \{ y= -\sqrt{3}x+4n \}_{n \in \mathbb{Z}} 
\end{align*}

See the black lines in \Cref{fig:hexgridsur}.

\begin{figure}
    \centering
    \resizebox{!}{7.5cm}{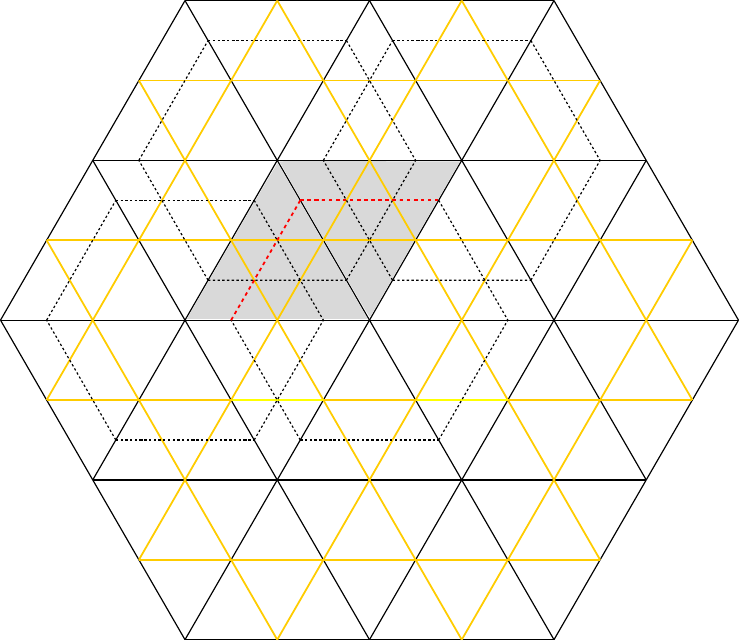}
    \caption{Using the hexagonal grid to construct an almost veering branched surface on $T^1 S^2(3,3,3)$.}
    \label{fig:hexgridsur}
\end{figure}

The same construction gives a branched surface $\widetilde{B_0}$ on $T^1 \mathbb{R}^2$, which quotients down to an almost veering branched surface $B_0$ on $T^1 S^2(3,3,3)$ and an almost veering branched surface $B_0/\langle \iota \rangle$ on $S^3$.

Next we locate a system of equivariant connecting annuli in $B_0$. For each vertex $(x_0,y_0)$ in the orbit of $(0,0)$ under $G$, consider the hexagon $H$ in $\mathbb{R}^2$ with vertices $(x_0 \pm \sqrt{3},y_0), (x_0 \pm \frac{\sqrt{3}}{2}, y_0 \pm \frac{3}{2})$ and consider the torus $T^1 \mathbb{R}^2|_H$. The hexagon $H$ for the point $(0,0)$ is drawn in dotted lines in the center of \Cref{fig:hexgridsur}.

Let $E$ be the union of the edge of $H$ between $(x_0-\sqrt{3}, y_0)$ and $(x_0 - \frac{\sqrt{3}}{2}, y_0 + \frac{3}{2})$ and the edge of $H$ between $(x_0 - \frac{\sqrt{3}}{2}, y_0 + \frac{3}{2})$ and $(x_0 + \frac{\sqrt{3}}{2}, y_0 + \frac{3}{2})$, which we draw in red in \Cref{fig:hexgridsur}. We draw $T^1 \mathbb{R}^2|_E$ in \Cref{fig:hexsection}.

\begin{figure}
    \centering
    \resizebox{!}{4.5cm}{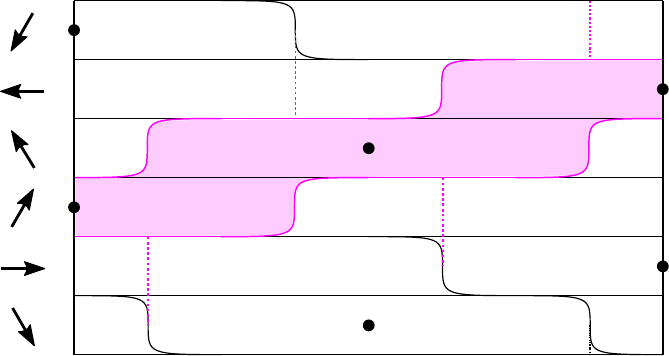}
    \caption{The intersection of $T^1 \mathbb{R}^2|_E$ with $\widetilde{B_0}$. The pink region defines a connecting annulus of $\widetilde{B_0}$. The black dots denote where the full lifts of the grid lines intersect $T^1 \mathbb{R}^2|_E$. The pink vertical dotted lines denote where the other connecting annuli intersect $T^1 \mathbb{R}^2|_E$.}
    \label{fig:hexsection}
\end{figure}

We can find an annulus $\widetilde{A_1}$ on $T^1 \mathbb{R}^2|_H$ by taking a union of the pink regions in copies of \Cref{fig:hexsection}. $\widetilde{A_1}$ is a connecting annulus between a pair of parallel horizontal surgery curves on $\widetilde{B_0}$. Upon quotienting by $G^+$, $\widetilde{A_1}$ descends to an equivariant connecting annulus $A_1$ of $B_0$.

We can repeat everything for a vertex in the orbit of $(\frac{2}{\sqrt{3}},2)$ or in the orbit of $(-\frac{2}{\sqrt{3}}, 2)$ under $G$ instead. (In fact, there are elements of $\Isom(\mathbb{R}^2)$ sending $(0,0)$ to $(\frac{2}{\sqrt{3}},2)$ and to $(-\frac{2}{\sqrt{3}},2)$ which preserves the whole setup.) In particular, we obtain two more connecting annuli $\widetilde{A_2}, \widetilde{A_3}$ of $\widetilde{B_0}$ which descend to equivariant connecting annuli $A_2, A_3$ of $B_0$.

Each $A_i$ is embedded in $T^1 S^2(3,3,3)$, but meets each other transversely along their boundaries. This can be deduced from looking at \Cref{fig:hexsection}, where we drew pink dotted lines where the preimages of $A_2, A_3$ come through one of the $T^1 \mathbb{R}^2|_E$ defining $\widetilde{A_1}$. Each of these intersection points satisfies (4) of \Cref{defn:hsursim}, hence $A_1, A_2, A_3$ is a system of equivariant connecting annulus, and one can read off from \Cref{fig:hexsection} that $n_i=2, m_i=2$ for all $i$, $q_{ij}=0$ for $i=j$, $q_{ij}=1$ for $i \neq j$, hence $C=\begin{bmatrix} 0 & 1 & 1 \\ 1 & 0 & 1 \\ 1 & 1 & 0 \end{bmatrix}, d=\begin{bmatrix} 4 \\ 4 \\ 4 \end{bmatrix}$, in the notation of \Cref{prop:hsursumtriplepoints}.

Now for $(k_1,k_2,k_3) > (0,0,0)$, apply concurrent halved $(\frac{1}{-k_1}, \frac{1}{-k_2}, \frac{1}{-k_3})$ horizontal surgery on this system. This gives a veering branched surface $B_{(k_1, k_2, k_3)}$ on $T^1 S^2(3+k_1,3+k_2,3+k_3)$, which, after drilling out the cores of the complementary regions, descends to a veering branched surface on $S^3 \backslash M(\frac{1}{3+k_1}+1, \frac{1}{3+k_2}-1, \frac{1}{3+k_3}-1)$. Taking its dual ideal triangulation, this proves \Cref{thm:genus0} in this case.

Taking the orientation on $T^1 \mathbb{R}^2$ to be that given by $(\frac{\partial}{\partial x},\frac{\partial}{\partial y}, \frac{\partial}{\partial \theta}) $, $B_0$ has $6$ red triple points and $(\frac{1}{-k_1}, \frac{1}{-k_2}, \frac{1}{-k_3})$-surgery on our system produces $2k_1k_2+2k_1k_3+2k_2k_3+4k_1+4k_2+4k_3$ blue triple points, and so $B_{(k_1,k_2, k_3)}$ has $2k_1k_2+2k_1k_3+2k_2k_3+4k_1+4k_2+4k_3$ blue triple points and $6$ red triple points. Hence the veering triangulation on $S^3 \backslash M(\frac{1}{3+k_1}+1, \frac{1}{3+k_2}-1, \frac{1}{3+k_3}-1)$ has $k_1k_2+k_1k_3+k_2k_3+2k_1+2k_2+2k_3+3$ tetrahedra, $k_1k_2+k_1k_3+k_2k_3+2k_1+2k_2+2k_3$ blue edges and $3$ red edges. 

\subsection{Case 4: $n=4, (p_1,p_2,p_3,p_4)>(2,2,2,2)$, or $n \geq 5$} \label{subsec:ngeq4}

The way we construct the initial branched surface $B_0$ will be different in this last case. Instead of prescribing surfaces that comprise $\widetilde{B_0}$, we describe the branched surface using a movie of train tracks. In the same vein, we will describe the system of equivariant connecting surgery using the trace of an interval in the movie. We remark that the subcase when $n=4$ and $(p_1,p_2,p_3,p_4)>(2,2,2,2)$ can be tackled using the same strategy as in the first three cases via a rectangular grid on $\mathbb{R}^2$, but for the benefit of \Cref{sec:highergenus} we choose to present the construction in the way that we do.

Let $Q$ be a disc with $n$ corners. Choose some orientation on $Q$. Let $d_1,...,d_n$ be the vertices along $\partial Q$, taken in some cyclic and counterclockwise order, and let $c_i$ be the side of $Q$ going from $d_i$ to $d_{i+1}$. Choose some Riemannian metric on $Q$ such that each $c_i$ is a geodesic and each $d_i$ is a right angle. 

Let $T$ be the solid torus that is $T^1 Q$. One can construct a branched surface in $T$ by starting from the desired intersection of the branched surface with $\partial T$, then sweeping inwards to get a movie of train tracks on tori, and finally `capping off' by specifying what the branched surface looks like near the core of $T$. We will be illustrating these movies as frames obtained by looking from outside $T$ as we sweep the torus inwards. In addition, we will want to orient the components of the branch locus of this branched surface. This can be done by consistently coorienting every switch of the train tracks in the movie into or out of the page. If one wants the orientations to satisfy (iii) of \Cref{defn:vbs} (or \Cref{defn:avbs}), only certain moves are allowed in the movie. These moves are illustrated in \Cref{fig:moves1}, where $\otimes$ means that the orientation is into the page while $\odot$ means that the orientation is out of the page. The portion of the branched surface near the core of $T$, which we use to cap off the movie, should be verified to satisfy (iii) as well.

\begin{figure}
    \centering
    \fontsize{8pt}{8pt}\selectfont
    \resizebox{!}{6cm}{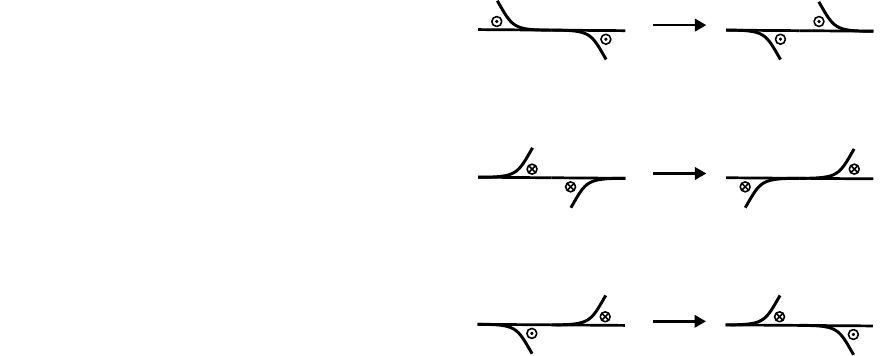}
    \caption{Moves allowed in the movie of train tracks in order to satisfy (iii) of \Cref{defn:vbs} (or \Cref{defn:avbs}).}
    \label{fig:moves1}
\end{figure}

Actually, for the rest of this paper, we will only need to use the two moves in the top row in \Cref{fig:moves1}. We will refer to this move as \textit{splitting branches $b$ and $d$ across $c$}, with the branches labelled as in \Cref{fig:moves1}. We will also label the branches after the move as in \Cref{fig:moves1}, namely, we add a prime to the branch being split along and retain the labels of the rest of the branches.

With this strategy in mind, we can construct a branched surface inside $T$ using the movie in \Cref{fig:highergenusfillmovie} (ignoring the pink cooriented intervals for now) and capping off by \Cref{fig:moviecore}. 

To describe the movie precisely, suppose each $c_i$ is parametrized to go from $t=0$ to $t=2$. Let $R_\theta:T^1 S|_c \to T^1 S|_c$ be the map that rotates vectors by $\theta$ counterclockwise. 

Construct a train track $\tau_0$ on $\partial T$ by first taking the union of the $4n$ line segments $\{R_{k \frac{\pi}{2}+\frac{\pi}{4}} c'_i(t)\}$, where $k=1,...,4, i=1,...,n$. Then add the $n$ branches $\{R_{\theta} c'_i(1):-\frac{3\pi}{4} \leq \theta \leq -\frac{\pi}{4} \}$, with the top endpoint $R_{-\frac{\pi}{4}} c'_i(1)$ on $R_{-\frac{\pi}{4}} c'_i(t)$ combed in the direction of increasing $t$, and the bottom endpoint $R_{-\frac{3\pi}{4}} c'_i(1)$ on $R_{-\frac{3\pi}{4}} c'_i(t)$ combed in the direction of decreasing $t$. Name these branches $c_{i,N}$ respectively, and coorient the switches which these branches meet out of the page. Similarly, add the $n$ branches $\{R_{\theta} c'_i(1):\frac{\pi}{4} \leq \theta \leq \frac{3\pi}{4} \}$, with the top endpoint $R_{\frac{3\pi}{4}} c'_i(1)$ on $R_{\frac{3\pi}{4}} c'_i(t)$ combed in the direction of decreasing $t$, and the bottom endpoint $R_{\frac{\pi}{4}} c'_i(1)$ on $R_{\frac{\pi}{4}} c'_i(t)$ combed in the direction of increasing $t$. Name these branches $c_{i,S}$ respectively, and coorient the switches which these branches meet into the page. Notice that the branches we add divide the horizontal lines $\bigcup \{R_{k \frac{\pi}{2}+\frac{\pi}{4}} c'_i(t)\}$ into $4n$ branches. Name the branch which $R_{-\frac{3\pi}{4}} c'_i(0)$ lies on $d_{i,N}$, the branch which $R_{-\frac{\pi}{4}} c'_i(0)$ lies on $d_{i,W}$, the branch which $R_{\frac{\pi}{4}} c'_i(0)$ lies on $d_{i,S}$, the branch which $R_{\frac{3\pi}{4}} c'_i(0)$ lies on $d_{i,E}$. See the first frame in \Cref{fig:highergenusfillmovie}. 

Now split $d_{i-2,W}$ and $d_{i,E}$ across $d_{i-1,N}$ to get $\tau_1$. Note that this is the move illustrated in \Cref{fig:moves1} top. See the second frame of \Cref{fig:highergenusfillmovie} for $\tau_1$. Upon simplification, one can see that $\tau_1$ is the same as the train track illustrated in the third frame of \Cref{fig:highergenusfillmovie}. Then split $d_{i-3,W}$ and $d_{i,E}$ across $c_{i-2,N}$ to get $\tau_2$. See the fourth frame of \Cref{fig:highergenusfillmovie}. $\tau_2$ can in turn be simplified to look like the fifth frame of \Cref{fig:highergenusfillmovie}. Inductively, to get from $\tau_{2s}$ to $\tau_{2s+1}$, split $d_{i-2s-2,W}$ and $d_{i,E}$ across $d^{(s)}_{i-s-1,N}$; to get from $\tau_{2s+1}$ to $\tau_{2s+2}$, split $d_{i-2s-3,W}$ and $d_{i,E}$ across $c^{(s)}_{i-s-2,N}$. Continue until we get to $\tau_{n-4}$.

Intuitively, what we are doing is taking the closed curve $\bigcup c_{i,N} \cup d_{i,N}$ and unwinding $\tau_0$ along it. One can consider this to be a variant of the horizontal surgery we described in \Cref{subsec:hsur}, where we spin sectors around a curve. Each step `adds a layer' to the picture, so after $n-4$ steps, $\tau_{n-4}$ has $n$ layers with two branches lying within each layer. At this point we do something different to wrap up the movie.

Namely, if $n$ is even, split $c^{(\frac{n-4}{2})}_{i-1,N}$ and $c^{(\frac{n-4}{2})}_{i,N}$ across $d^{(\frac{n-4}{2})}_{i,N}$ to get $\tau_{n-3}$; if $n$ is odd, split $d^{(\frac{n-3}{2})}_{i,N}$ and $d^{(\frac{n-3}{2})}_{i+1,N}$ across $c^{(\frac{n-5}{2})}_{i,N}$ to get $\tau_{n-3}$. See the sixth frame of \Cref{fig:highergenusfillmovie}. Upon simplification, one can see that $\tau_{n-3}$ is the same as the train track illustrated in the last frame of \Cref{fig:highergenusfillmovie}, which is the boundary of a branched surface of the form of that illustrated in \Cref{fig:moviecore} and with $n$ layers. Hence we can cap off by placing that branched surface in the core of $T$.

More formally, if $n$ is even, take arcs $b_{i+1,W}$ going from the switch between $c_{i,S}$ and $d_{i,S}$ to the switch between $c^{(\frac{n-4}{2})}_{i-\frac{n}{2},N}$ and $d^{(\frac{n-2}{2})}_{i-\frac{n}{2},N}$, and arcs $b_{i-1,E}$ going from the switch between $c_{i-1,S}$ and $d_{i,S}$ to the switch between $c^{(\frac{n-4}{2})}_{i-\frac{n}{2}-1,N}$ and $d^{(\frac{n-2}{2})}_{i-\frac{n}{2},N}$ which are parallel to $\tau_1$. 

We observe that the closed curve $d_{i+1,W} \cup d^{(\frac{n-2}{2})}_{i-\frac{n}{2},N} \cup d_{i-1, E} \cup d_{i,S}$ is homotopically trivial in $T$. This follows from the fact that if we rewind the movie, this curve has the same isotopy class as $d_{i+1,W} \cup \bigcup_{j=i+2}^{i+n-3} d_{j,N} \cup c_{j,N} \cup d_{i-2,N} \cup d_{i-1,E} \cup d_{i,S} \subset \partial T$, which defines a vector field on $\partial Q$ with index $1$, hence extends to a vector field within $Q$. 

Therefore we can complete the branched surface by adding disc sectors bounded by $b_{i+1,W} \cup d_{i+1,W}$, by $b_{i-1, E} \cup d_{i-1, E}$, by $c^{(\frac{n-4}{2})}_{i-\frac{n}{2},N} \cup b_{i+1,W} \cup c_{i,S} \cup b_{i,E}$, and by $d^{(\frac{n-2}{2})}_{i-\frac{n}{2},N} \cup b_{i+1,W} \cup d_{i,S} \cup b_{i-1,E}$. 

Similarly, if $n$ is odd, take arcs $b_{i+1,W}$ going from the switch between $c_{i,S}$ and $d_{i,S}$ to the switch between $c^{(\frac{n-3}{2})}_{i-\frac{n+1}{2},N}$ and $d^{(\frac{n-3}{2})}_{i-\frac{n-1}{2},N}$, and arcs $b_{i-1,E}$ going from the switch between $c_{i-1,S}$ and $d_{i,S}$ to the switch between $c^{(\frac{n-3}{2})}_{i-\frac{n+1}{2},N}$ and $d^{(\frac{n-3}{2})}_{i-\frac{n+1}{2},N}$ which are parallel to $\tau_1$. By observing that the closed curve $d_{i+1,W} \cup c^{(\frac{n-3}{2})}_{i-\frac{n+1}{2},N} \cup d_{i-1, E} \cup d_{i,S}$ is homotopically trivial in $T$, we can complete the branched surface by adding disc sectors bounded by $b_{i+1,W} \cup d_{i+1,W}$, by $b_{i-1, E} \cup d_{i-1, E}$, by $d^{(\frac{n-3}{2})}_{i-\frac{n-1}{2},N} \cup b_{i+1,W} \cup c_{i,S} \cup b_{i,E}$ and by $c^{(\frac{n-3}{2})}_{i-\frac{n+1}{2},N} \cup b_{i+1, W} \cup d_{i,S} \cup b_{i-1,E}$. 

\begin{figure}
    \centering
    \fontsize{10pt}{10pt}\selectfont
    \resizebox{!}{16cm}{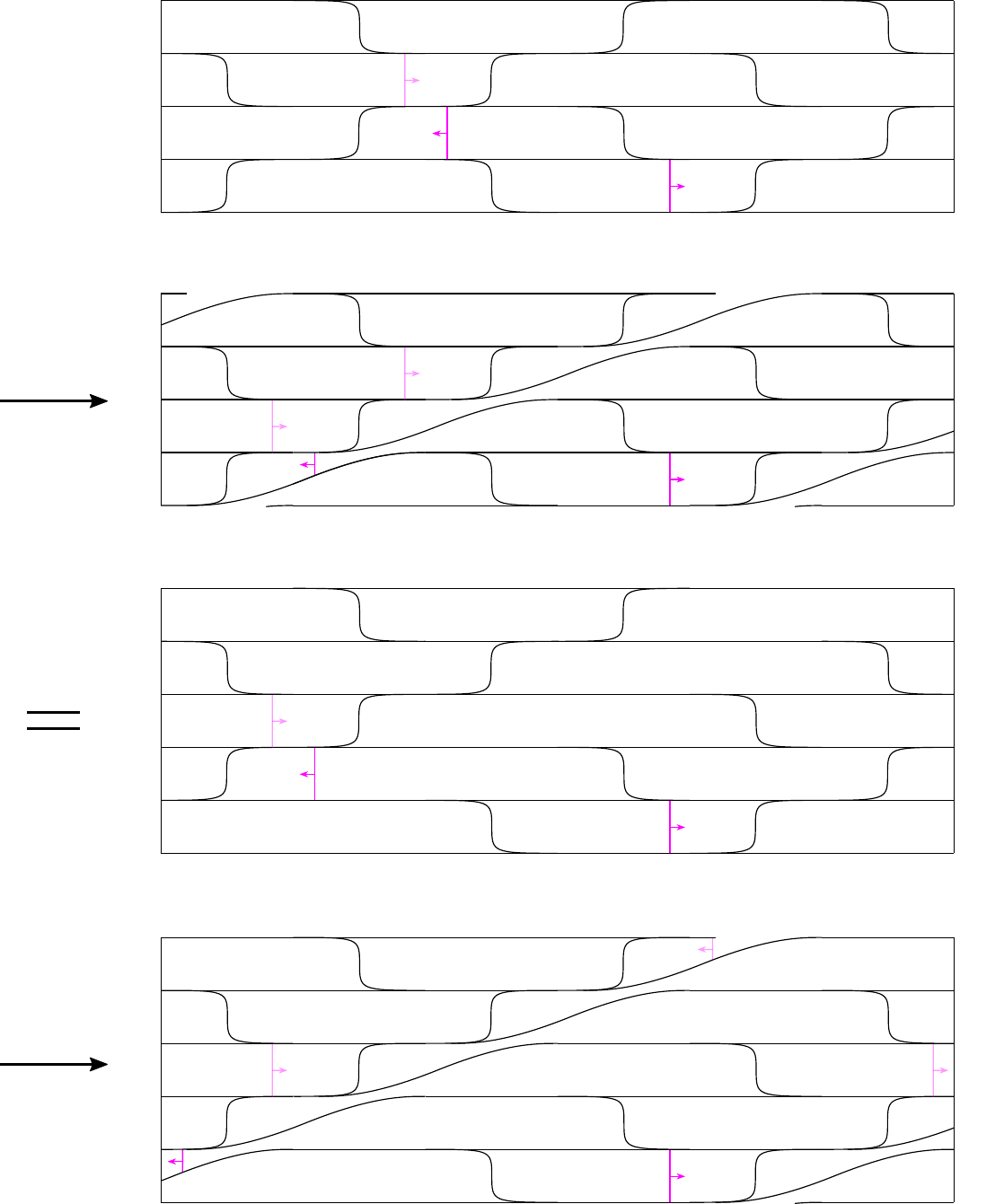}
    \caption{The movie of train tracks we use to construct $B_Q \cap T$.}
\end{figure}

\begin{figure}
    \ContinuedFloat
    \centering
    \fontsize{10pt}{10pt}\selectfont
    \resizebox{!}{20cm}{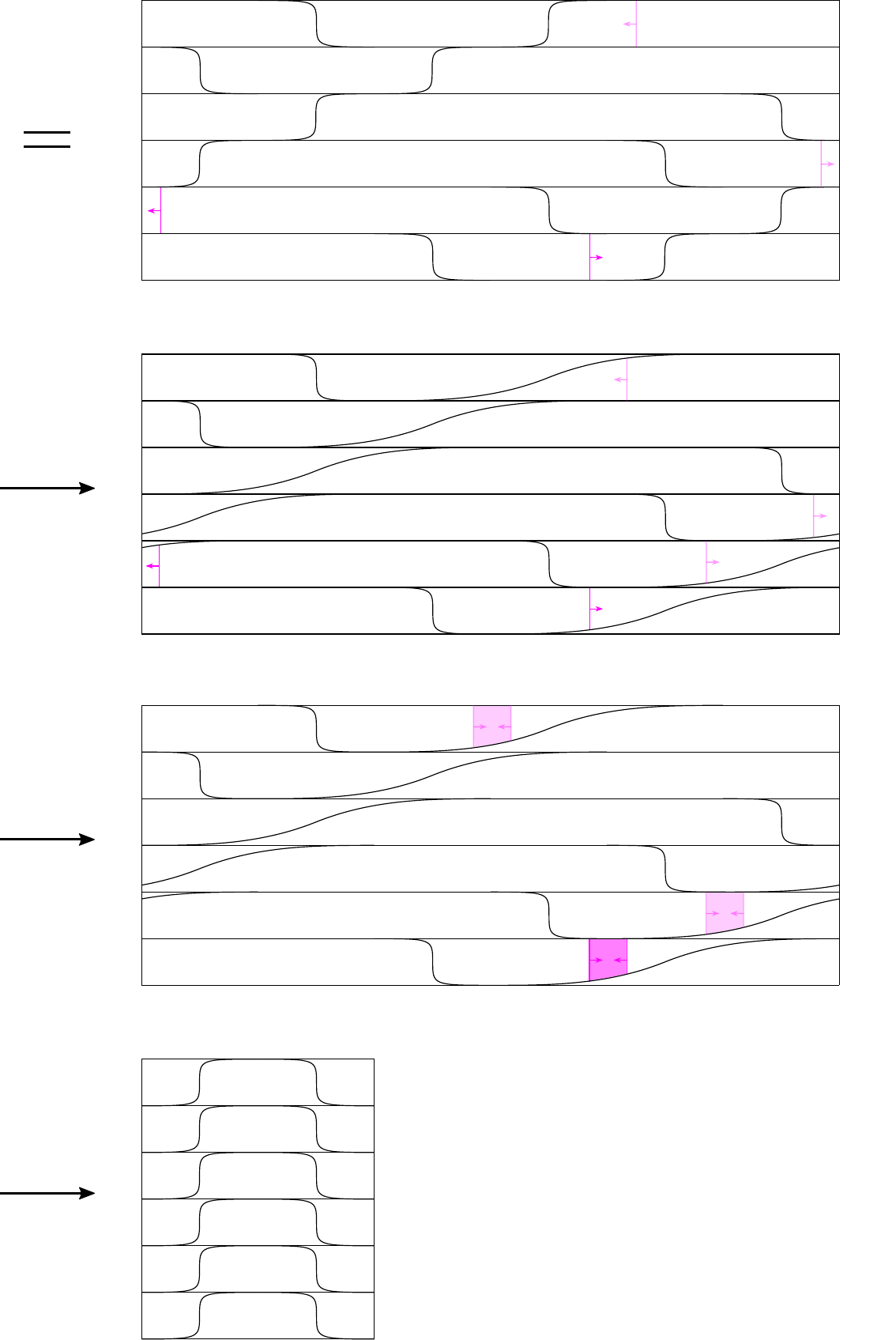}
    \caption{The movie of train tracks we use to construct $B_Q \cap T$.}
    \label{fig:highergenusfillmovie}
\end{figure}

\begin{figure}
    \centering
    \resizebox{!}{5cm}{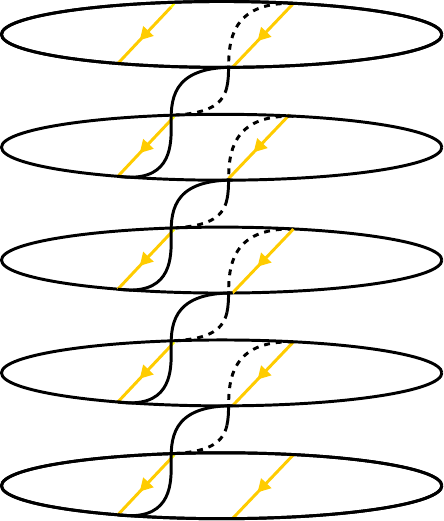}
    \caption{The portion of the branched surface near the core of $T$, which we use to cap off the movie in \Cref{fig:highergenusfillmovie}.}
    \label{fig:moviecore}
\end{figure}

In any case, we now have a branched surface in $T$ with orientations defined on the components of its branch locus. We call this branched surface $B_Q$.

The orbifold $S^2(2,...,2)$ is the union of two copies of $Q$, and $T^1 S^2(2,...,2)$ is the union of two copies of $T$ where each face $c_i \times S^1$ is glued to its other copy by reflection across $\{\pm c'_i(t)\}$. The initial train track $\tau_0=B_Q \cap \partial T$ is preserved under this reflection, hence we can glue together the two copies of $B_Q$ to get a branched surface $B_0$ on $T^1 S^2(2,...,2)$. Interchanging the two copies of $T$ defines an involution $\iota$ that preserves $B_0$.

Next we have to describe a system of connecting annuli. Observe that one can construct a path lying on $B_Q$ by tracing out points lying on the train tracks in the movie. Similarly, a cooriented surface can be constructed by tracing out cooriented intervals in the movie. If one wants the constructed surface to be part of a connecting annulus surgery along which produces blue triple points, then the endpoints of the intervals can only perform the moves illustrated in \Cref{fig:moves2}, and the interior of the intervals must lie away form the switches of the train tracks. Of course, there are symmetric moves if one wants to construct a connecting annulus surgery along which produces red triple points, but we will let the reader fill those out.

For a collection of such surfaces to form a system of connecting annuli, the interior of the intervals must be disjoint from each other, and the endpoints of the intervals must only move past each other in the moves illustrated in \Cref{fig:moves3}. 

We remark that for the rest of this paper, we will only need to use the moves in the top rows of \Cref{fig:moves2} and \Cref{fig:moves3}.

\begin{figure}
    \centering
    \resizebox{!}{8cm}{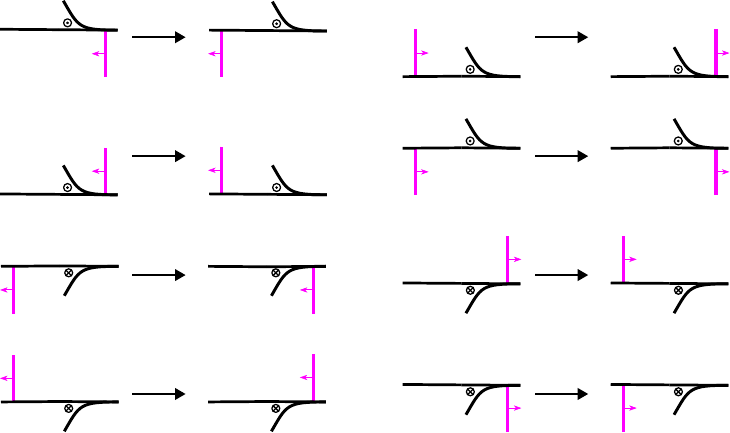}
    \caption{Moves allowed in the movie of train tracks in order for the path traced out by the endpoint of the cooriented interval to be a horizontal surgery curve, surgery on which produces blue triple points.}
    \label{fig:moves2}
\end{figure}

\begin{figure}
    \centering
    \resizebox{!}{4cm}{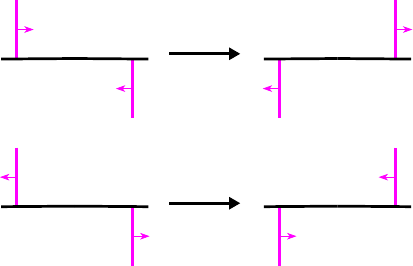}
    \caption{Moves allowed in the movie of train tracks in order for the cooriented intervals to determine a system of connecting annuli, surgery on which produces blue triple points.}
    \label{fig:moves3}
\end{figure}

Using this idea, we build surfaces that lie in $T$ using the pink cooriented intervals and patches in \Cref{fig:highergenusfillmovie}; the patches cap off traces of the vertical intervals to determine the surfaces.

Again, we will describe this formally. On $\tau_0$, let $a_{i,W}=\{R_{\theta} c'_i(\frac{3}{2}):-\frac{\pi}{4} \leq \theta \leq \frac{\pi}{4} \}$, cooriented in the direction of increasing $t$ along $c_i(t)$, and let $a_{i,E}=\{R_{\theta} c'_{i-1}(\frac{1}{2}):\frac{3\pi}{4} \leq \theta \leq \frac{5\pi}{4} \}$, cooriented in the direction of decreasing $t$ along $c_{i-1}(t)$.

On $\tau_0$, move the top endpoint of $a_{i,E}$ past the bottom endpoint of $a_{i-2,W}$ on $d_{i-1,N}$, then from $d_{i-1,N}$ to $c_{i-2,N}$. Symmetrically, move the bottom endpoint of $a_{i,W}$ past the top endpoint of $a_{i+2,E}$ on $d_{i+1,N}$, then from $d_{i+1,N}$ to $c_{i+1,N}$. Then do splitting moves to get $\tau_1$. See the second frame of \Cref{fig:highergenusfillmovie}. On $\tau_1$, move the top endpoint of $a_{i,E}$ past the bottom endpoint of $a_{i-3,W}$ on $c_{i-2,N}$, then from $c_{i-2,N}$ to $d'_{i-2,N}$. Symmetrically, move the bottom endpoint of $a_{i,W}$ past the top endpoint of $a_{i+3,E}$ on $c_{i+1,N}$, then from $c_{i+1,N}$ to $d'_{i+2,N}$. Then do splitting moves to get $\tau_2$. See the fourth frame of \Cref{fig:highergenusfillmovie}. Inductively, on $\tau_{2s}$, move the top endpoint of $a_{i,E}$ past the bottom endpoint of $a_{i-2s-2,W}$ on $d^{(s)}_{i-s-1,N}$, then from $d^{(s)}_{i-s-1,N}$ to $c^{(s)}_{i-s-2,N}$, and symmetrically move the bottom endpoint of $a_{i,W}$ past the top endpoint of $a_{i+2s+2,E}$ on $d^{(s)}_{i+s+1,N}$, then from $d^{(s)}_{i+s+1,N}$ to $c^{(s)}_{i+s+1,N}$, before splitting to get $\tau_{2s+1}$. On $\tau_{2s+1}$, move the top endpoint of $a_{i,E}$ past the bottom endpoint of $a_{i-2s-3,W}$ on $c^{(s)}_{i-s-2,N}$, then from $c^{(s)}_{i-s-2,N}$ to $d^{(s+1)}_{i-s-2,N}$, and symmetrically move the top endpoint of $a_{i,W}$ past the top endpoint of $a_{i+2s+3,E}$ on $c^{(s)}_{i+s+1,N}$, then from $c^{(s)}_{i+s+1,N}$ to $d^{(s+1)}_{i+s+2,N}$ before splitting to get $\tau_{2s+2}$.

When we get to $\tau_{n-4}$, if $n$ is even, move the top endpoint of $a_{i,E}$ past the bottom endpoint of $a_{i-n+2,W}$ on $d^{(\frac{n-4}{2})}_{i-\frac{n-2}{2},N}$, then from $d^{(\frac{n-4}{2})}_{i-\frac{n-2}{2},N}$ to $c^{(\frac{n-4}{2})}_{i-\frac{n}{2},N}$. Symmetrically, move the bottom endpoint of $a_{i,W}$ past the top endpoint of $a_{i+n-2,E}$ on $d^{(\frac{n-4}{2})}_{i+\frac{n-2}{2},N}$, then from $d^{(\frac{n-4}{2})}_{i+\frac{n-2}{2},N}$ to $c^{(\frac{n-4}{2})}_{i+\frac{n-2}{2},N}$, before splitting to get $\tau_{n-3}$. If $n$ is odd, move the top endpoint of $a_{i,E}$ past the bottom endpoint of $a_{i-n+2,W}$ on $c^{(\frac{n-5}{2})}_{i-\frac{n-1}{2},N}$, then from $c^{(\frac{n-5}{2})}_{i-\frac{n-1}{2},N}$ to $d^{(\frac{n-3}{2})}_{i-\frac{n-1}{2},N}$. Symmetrically, move the bottom endpoint of $a_{i,W}$ past the top endpoint of $a_{i+n-2,E}$ on $c^{(\frac{n-5}{2})}_{i+\frac{n-3}{2},N}$, then from $c^{(\frac{n-5}{2})}_{i+\frac{n-3}{2},N}$ to $d^{(\frac{n-3}{2})}_{i+\frac{n-1}{2},N}$, before splitting to get $\tau_{n-3}$. See the sixth frame in \Cref{fig:highergenusfillmovie}. 

Then, if $n$ is even, move the top endpoint of $a_{i,E}$ past the bottom endpoint of $a_{i-n+1,W}$ on $c^{(\frac{n-4}{2})}_{i-\frac{n}{2},N}$, then from $c^{(\frac{n-4}{2})}_{i-\frac{n}{2},N}$ to $d_{i+1,W}$, and move the bottom endpoint of $a_{i,E}$ from $d_{i-1,E}$ to $c^{(\frac{n-4}{2})}_{i-\frac{n+2}{2},N}$, pushing $a_{i,E}$ across $d^{(\frac{n-2}{2})}_{i-\frac{n}{2},N}$ at the same time. Now $a_{i,E}, a_{i,W}$ and subintervals of $c^{(\frac{n-4}{2})}_{i-\frac{n+2}{2},N}$ and $d_{i+1,W}$ bound a rectangle, whose interior is disjoint from $\tau_1$, and to which the coorientations of $a_{i,E}, a_{i,W}$ are pointing inwards. If $n$ is odd, move the top endpoint of $a_{i,E}$ past the bottom endpoint of $a_{i-n+1,W}$ on $d^{(\frac{n-3}{2})}_{i-\frac{n-1}{2},N}$, then from $d^{(\frac{n-3}{2})}_{i-\frac{n-1}{2},N}$ to $d_{i+1,W}$, and move the bottom endpoint of $a_{i,E}$ from $d_{i-1,E}$ to $d^{(\frac{n-3}{2})}_{i-\frac{n+1}{2},N}$, pushing $a_{i,E}$ across $c^{(\frac{n-3}{2})}_{i-\frac{n+1}{2},N}$ at the same time. Now $a_{i,E}, a_{i,W}$ and subintervals of $d^{(\frac{n-3}{2})}_{i-\frac{n+1}{2},N}$ and $d_{i+1,W}$ bound a rectangle, whose interior is disjoint from $\tau_1$, and to which the coorientations of $a_{i,E}, a_{i,W}$ are pointing inwards. See the second to last frame in \Cref{fig:highergenusfillmovie}. The traces of $a_{i,E}$ and $a_{i,W}$ together with this rectangle gives a rectangular surface $A_{i,Q}$ in $T$.

For each $i$, the intervals $A_{i,Q} \cap \partial T$ are preserved under reflection across $\{\pm c'_i(t)\}$, hence in $T^1 S^2(2,...,2)$ we can glue together the two copies of $A_{i,Q}$ to get an annuli $A_i$ for $B_0$ on $T^1 S^2(2,...,2)$. 

\begin{claim} \label{lemma:ngeq4vbs}
$B_0$ is an almost veering branched surface and $\{A_i\}$ is a system of equivariant connecting annuli. If $n \geq 5$ then $B_0$ is veering. The cores of the complementary regions of $B_0$ are given by $\overset{\leftrightarrow}{c}$.
\end{claim}

\begin{proof}
Notice that we can give $T$ a natural structure of a 3-manifold with cusps and corners by declaring that the fibers above $c_i$ are smooth faces which meet along corner edges that are the fibers above $d_i$. This induces corners on the components of $T \cut B_Q$, namely between faces that lie on $\partial T$ and between faces that lie on $\partial T$ and faces that lie on $B_Q$. Under this corner structure, we claim that components of $T \cut B_Q$ are 1-cusped triangles times an interval. 

To explain this, let us first disregard the corner structures. Call the 3-manifold that is a 1-cusped triangle times an interval but forgetting the corner structure a \textit{taco}. See \Cref{fig:taco}. Notice that throughout the movie of train tracks, the topology of the complementary regions of the train tracks do not change. Hence inside $T$ but outside of the core, the complementary regions of $B_Q$ are cusped bigons times an interval. Now the complementary regions of $B_Q$ inside the core are tacos, so adding in the products, the same is true for complementary regions of $B_Q$ in $T$. Finally, to put the corners back into the picture, each cusped bigon complementary region of $\tau_0$ on $\partial T$ meets the corner edges in two intervals, hence the topology of the complementary regions of $B_Q$ in $T$ is as claimed.

\begin{figure}
    \centering
    \resizebox{!}{3cm}{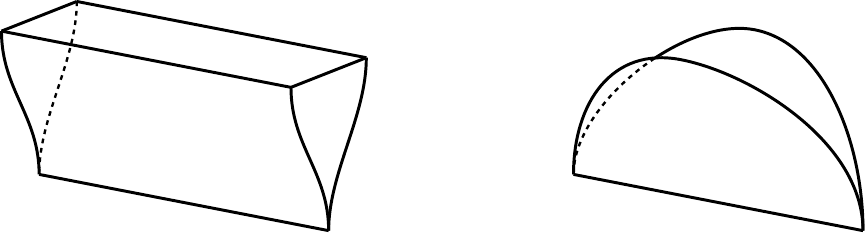}
    \caption{Forgetting the corner structure of a 1-cusped triangle times interval (left) results in a taco (right).}
    \label{fig:taco}
\end{figure}

One glues together faces of these complementary regions to form complementary regions of $B_0$, with four of these pieces coming together at each corner. The only possible result of such gluings are 2-cusped solid tori. This proves \Cref{defn:avbs}(ii).

Notice that we can obtained the cores of these solid tori by taking the union of the line segments $\{\pm c'_i(t)\}$ above $c_i$. This shows that the cores of the complementary regions of $B_0$ are exactly given by $\overset{\leftrightarrow}{c}$. 

We now show \Cref{defn:avbs}(i). Suppose there is a disc sector without corners. The boundary of the sector cannot be cooriented outwards, otherwise one of the complementary regions of $B_0$ adjacent to the sector cannot be a 2-cusped solid torus. But if the boundary of the sector is cooriented inwards,  then the complementary region of $B_0$ that contains the component of the branch locus that is the boundary would be a 2-cusped solid torus whose core is homotopically trivial in $T^1 S^2(2,...,2)$. But we know that no component of $\overset{\leftrightarrow}{c}$ is homotopically trivial, hence we reach a contradiction.

\Cref{defn:vbs}(iii) is true by construction, since in the movie we only performed moves as listed in \Cref{fig:moves1}, and for the portion of the branched surface we used to cap off the movie, there are no triple points. This proves that $B_0$ is an almost veering branched surface.

For $n \geq 5$, $T^1 S^2(2,...,2) \backslash \overset{\leftrightarrow}{c}$ is hyperbolic by \Cref{thm:fulllifthyp}. Hence by \Cref{prop:avbshyp}, $B_0$ is veering in this case.

Finally, that the $A_i$ form a system of equivariant connecting annuli follows from the fact that we only did the moves listed in \Cref{fig:moves2} and \Cref{fig:moves3}, and the rectangles we added during $\tau_{n-3}$ lie away from the branch locus.
\end{proof}

Tracing through the movie, we see that we did $n(n-3)$ splitting moves (more specifically, $n$ moves for every frame advanced), hence there are $n(n-3)$ triple points in $B_Q$, and $2n(n-3)$ triple points in $B_0$ in total. Among these, $2n(n-4)$ are blue and $2n$ triple points are red, since the moves done during the first $n-4$ frames give blue triple points while the moves done in the last frame give red triple points. 

Within the movie, the top boundary component of $A_{i,Q}$ meets the branch locus of $B_Q$ for a total of $n-3$ times on the side away from $A_i$ (once for each of the first $n-3$ frames) and 1 time on the side of $A_i$ (on the last frame). Similarly, the bottom component of $A_{i,Q}$ meets the branch locus of $B_Q$ for a total of $n-3$ times on the side away from $A_i$ and 1 time on the side of $A_i$. Hence by symmetry, a boundary component of $A_i$ meets the branch locus of $B_0$ a total of $2(n-3)$ times on the side away from $A_i$ and 2 times on the side of $A_i$. In other words, in the notation of \Cref{subsec:hsurvar}, $n_i=2, m_i=2(n-3)$ for all $i$. 

Finally, the top boundary component of $A_i$ meets the bottom boundary components of $A_{i-2},...,A_{i-n+1}$ each once inside $T$. The bottom boundary component of $A_i$ meets the top boundary components of $A_{i+2},...,A_{i+n-1}$ each once inside $T$. Hence in the notation of \Cref{subsec:hsurvar}, 
$$c_{ij}=\begin{cases} 0, i=j \\ 1, |i-j|=1 \\ 2, |i-j|\geq 2 \end{cases}$$ and $d_i=4(n-3)$ for all $i$.

Now for $k_i \geq 0$, perform concurrent halved $\frac{1}{-k_i}$ horizontal surgery on the system $\{A_i\}$, and use the same reasoning as in cases (1)-(3) to see that we get a veering branched surface $B_{(k_i)}$ on $T^1 S^2(2+k_1,...,2+k_n)$ with $\sum_{|i-j|=1} k_ik_j+2\sum_{|i-j| \geq 2} k_ik_j+4(n-3)\sum k_i+2n(n-4)$ blue triple points and $2n$ red triple points (unless $n=4$ and $k_1=k_2=k_3=k_4=0$). This descends to a veering branched surface on $S^3$ with the cores of its complementary regions given by $M(\frac{1}{2+k_1}+1,\frac{1}{2+k_2}-1,...,\frac{1}{2+k_n}-1)$, having $\frac{1}{2} \sum_{|i-j|=1} k_ik_j+\sum_{|i-j| \geq 2} k_ik_j+2(n-3)\sum k_i+n(n-4)$ blue triple points and $n$ red triple points.

Taking the dual ideal triangulation, we get a veering triangulation on $S^3 \backslash M(\frac{1}{2+k_1}+1,\frac{1}{2+k_2}-1,...,\frac{1}{2+k_n}-1)$ with $\frac{1}{2} \sum_{|i-j|=1} k_ik_j+\sum_{|i-j| \geq 2} k_ik_j+2(n-3)\sum k_i + n(n-3)$ tetrahedra, $\frac{1}{2} \sum_{|i-j|=1} k_ik_j+\sum_{|i-j| \geq 2} k_ik_j+2(n-3)\sum k_i+n(n-4)$ blue edges, and $n$ red edges. We have finally completed the proof of \Cref{thm:genus0}.

\section{Geodesic flows II: Higher genus surfaces} \label{sec:highergenus}

In this section, we explain \Cref{constr:highergenus} and \Cref{constr:markovpart}. The key observation is that the construction in \Cref{subsec:ngeq4} can be applied more generally. Namely, let $c$ be a filling collection of curves on a surface $S$ which has no triple intersections and for which the complementary regions $S \cut c$ are $(n \geq 4)$-gons. By fitting in the portions of branched surfaces we constructed in \Cref{subsec:ngeq4} into the unit tangent bundles over these $n$-gons, we can construct almost veering branched surfaces on $T^1 S$ with the cores of the complementary regions being the full lift of $c$. If $c$ has no parallel elements, then by \Cref{thm:fulllifthyp} and \Cref{prop:avbshyp}, the branched surface is in fact veering and hence dual to a veering triangulation. Again, we record this last fact as a theorem.

\begin{thm} \label{thm:highergenus}
Let $c$ be a filling collection of mutually nonparallel curves on a surface $S$ which has no triple intersections and for which the complementary regions $S \cut c$ are $(n \geq 4)$-gons. Then $T^1 S \backslash \overset{\leftrightarrow}{c}$ admits a veering triangulation.
\end{thm}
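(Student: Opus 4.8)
The plan is to reuse, essentially verbatim, the local construction of \Cref{subsec:ngeq5}, but now carried out over the complementary $n$-gons of $c$ in $S$ rather than over the two halves of $S^2(2,\dots,2)\backslash\backslash c$. Concretely: each complementary region $Q$ of $S\backslash\backslash c$ is an $m$-gon with $m\geq 4$ (I write $m$ since the number of sides may vary from region to region), whose sides $c_1,\dots,c_m$ are subarcs of the curves in $c$ meeting at the $m$ corners $d_1,\dots,d_m$; because $c$ has no triple intersections, each such corner is an honest transverse double point of $c$ where exactly two arcs cross. Over the solid torus $T_Q:=T^1 S|_Q$ I would run the movie of train tracks of \Cref{subsec:ngeq5} (\Cref{fig:highergenusfillmovie}), together with its capping-off near the core, to produce a branched surface $B_Q:=B_0\cap T_Q$ with oriented branch locus; the only input this construction requires is that $Q$ be an $m$-gon and an auxiliary metric near $\partial Q$ making consecutive sides meet perpendicularly, exactly as in \Cref{subsec:ngeq5}. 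Note that, unlike in \Cref{sec:genus0}, no horizontal surgery is needed here: the surface $S$ and the curve system $c$ are given, so the system of connecting annuli (the cooriented-interval data in \Cref{subsec:ngeq5}) plays no role and may simply be discarded.

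The key point enabling the gluing is that the starting train track $\tau_0$ that $B_Q$ induces on $\partial T_Q$, restricted to the fiber annulus over any edge $c_i$, depends only on the arc $c_i$ and its two endpoints: it consists of the four diagonal lifts $\{R_{k\pi/2+\pi/4}c_i'(t)\}$ of $c_i$, the two fiberwise branches $c_{i,N}$ and $c_{i,S}$ near the outward and inward normals of $Q$ along $c_i$, and the named branches $d_{i,*}$, $d_{i+1,*}$ through the corners. Since every edge $c_i$ of $c$ borders exactly two (not necessarily distinct) complementary regions, and since, under the orientation-reversal identifying the two sides of a fiber annulus, the branch $c_{i,N}$ and its coorientation and combing data seen from one side match the branch $c_{i,S}$ seen from the other — this is precisely the local check made implicitly for the $\iota$-equivariant gluing in \Cref{subsec:ngeq5} — the pieces $B_Q$ glue along the fiber annuli over the edges of $c$ to a branched surface $B_0$ in $T^1 S$ with oriented branch locus. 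One then re-runs the analysis of \Cref{lemma:ngeq5vbs}: each sector of $B_0$ is obtained by gluing sectors of the $B_Q$'s along sides lying over fiber annuli, giving discs with corners, annuli, or M\"obius bands; the complementary regions are assembled by gluing four ``$1$-cusped triangle $\times$ interval'' pieces around each fiber over a corner $d_i$, hence are $2$-cusped solid tori; and property (iii) of \Cref{defn:avbs} holds by construction since only the moves permitted in \Cref{subsec:ngeq5} were used. The no-corner-disc-sector case is excluded exactly as in \Cref{lemma:ngeq5vbs}, using that no component of $\overset{\leftrightarrow}{c}$ is homotopically trivial. Thus $B_0$ is an almost veering branched surface on $T^1 S$, and the cores of its complementary regions, being the union of the segments $\{\pm c_i'(t)\}$ over all edges, assemble to the full lift $\overset{\leftrightarrow}{c}$.

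It remains to upgrade ``almost veering'' to ``veering''. Drilling out the cores $\overset{\leftrightarrow}{c}$ turns each complementary $2$-cusped solid torus into a cusped torus shell, so $B_0$ becomes an almost veering branched surface in $M:=T^1 S\backslash\overset{\leftrightarrow}{c}$ all of whose complementary regions are cusped torus shells. A standard Euler-characteristic count (each corner of $c$ has valence $4$, each edge borders two $m$-gons with $m\geq 4$) gives $\chi(S)\leq 0$, with $\chi(S)<0$ as soon as some complementary region has more than four sides; together with the observation that the curves of $c$ are essential (a null-homotopic curve would bound a complementary disc with fewer than four corners) and the hypothesis that they are pairwise non-parallel, \Cref{thm:fulllifthyp} shows $M$ is hyperbolic. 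Then \Cref{prop:avbshyp} forces $B_0$ to be a veering branched surface, and since $M$ is orientable (the unit tangent bundle of any surface is orientable) and all complementary regions of $B_0$ are cusped torus shells, \Cref{prop:vbs} shows that the dual ideal triangulation of $B_0$ is a veering triangulation of $M$, proving \Cref{thm:highergenus}.

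The routine-but-delicate part, and the main obstacle, is the verification in the second paragraph: one must confirm that the boundary data of the local pieces $B_Q$ genuinely depends only on local edge-and-corner data and matches across fiber annuli (including coorientations and the combings at switches), and then repeat the somewhat intricate sector-and-complementary-region bookkeeping of \Cref{lemma:ngeq5vbs} without the global $\iota$-symmetry that was available in \Cref{subsec:ngeq5}. Once this is in place, the passage from ``$B_0$ almost veering'' to ``$B_0$ veering, hence dual to a veering triangulation'' is formally identical to the closed case, and the identification of the corresponding flow with the geodesic flow then follows as in \Cref{constr:highergenus}.
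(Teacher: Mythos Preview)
Your proposal is correct and follows essentially the same approach as the paper: build the branched surface by inserting the local pieces from \Cref{subsec:ngeq5} over each complementary $m$-gon, check that the boundary train tracks match along shared edges, rerun the sector and complementary-region analysis of \Cref{lemma:ngeq5vbs} (the paper records this as \Cref{lemma:highergenusavbs}), and then invoke \Cref{thm:fulllifthyp} and \Cref{prop:avbshyp} to pass from almost veering to veering. Your observation that the connecting-annulus data can simply be discarded here is exactly right, and your index argument ruling out null-homotopic components of $c$ matches the paper's.
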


Like \Cref{thm:genus0}, this theorem by itself is not very interesting. It is already known that these full lift complements admit veering triangulations just from the fact that they are fibered with fully-punctured pseudo-Anosov monodromy (by \cite[Theorem D]{CD20}). In fact, the monodromies of some of the fibering on these manifolds has been studied in \cite{DL19} and \cite{Mar20}, and in the former paper, invariant (up to folding moves) train tracks were found for certain cases where all complementary regions are ($n \geq 5$)-gons. 

The more significant point behind the theorem is that we make explicit the description of the veering branched surfaces and how they sit inside the manifolds. A consequence of this is that since the Anosov flows corresponding to the dual veering triangulations as given by \Cref{thm:vtpAcorr} must be the geodesic flow on $T^1 S$ (by \Cref{thm:sfspageod}), the reduced flow graphs of these veering branched surfaces, which we can explicitly describe, will encode Markov partitions for geodesic flows. 

We will explain the construction of the veering branched surfaces, as well as discuss some generalizations of the construction, in \Cref{subsec:notriangles} and explain the Markov partitions of geodesic flows we get from these in \Cref{subsec:Markovpart}.

\subsection{Construction of the branched surfaces} \label{subsec:notriangles}

As above, let $c$ be a filling collection of curves on $S$ which has no triple intersections and for which the complementary regions $S \cut c$ are $(n \geq 4)$-gons. We consider $c$ as a $4$-valent graph on $S$ by taking the vertices to be the set of intersections points among elements of $c$ and the edges to be segments of elements of $c$ between intersection points. 

For each complementary region $Q$ of $c$ in $S$, we place the branched surface $B_Q$ (along with the choice of orientations on the components of its branch locus) constructed in \Cref{subsec:ngeq4} inside $T^1 S|_Q$. If $Q_1$ and $Q_2$ are two adjacent complementary regions, we claim that the train tracks $B_{Q_1} \cap T^1 S|_{\partial Q_1}$ and $B_{Q_2} \cap T^1 S|_{\partial Q_2}$ agree along $T^1 S|_{\partial Q_1 \cap \partial Q_2}$. Indeed, if we let $e:[0,2] \to Q_1 \cap Q_2$ be a parametrization of $Q_1 \cap Q_2$, then both of these train tracks consist of the four horizontal lines $\{R_{k \frac{\pi}{2}} e'(t) : k=1,...,4\}$ and the two branches $\{R_\theta e'(1) : -\frac{3\pi}{4} \leq \theta \leq -\frac{\pi}{4}\}$ and $\{R_\theta e'(1) : \frac{\pi}{4} \leq \theta \leq \frac{3\pi}{4}\}$, with the same choice of combings. Thus the branched surfaces $B_Q$ glue up into a branched surface $B$ in $T^1 S$.

\begin{claim} \label{lemma:highergenusavbs}
$B$ is an almost veering branched surface with the cores of its complementary regions given by $\overset{\leftrightarrow}{c}$.
\end{claim}

\begin{proof}
This proof can be adapted from \Cref{lemma:ngeq4vbs} easily.

For \Cref{defn:avbs}(ii), recall that the complementary regions of $B_Q$ in $T^1 S|_Q$ are 1-cusped triangles times an interval. As in \Cref{lemma:ngeq4vbs}, this implies that the complementary regions of $B$ are 2-cusped solid tori. This also shows that the cores of the complementary regions of $B$ is given by $\overset{\leftrightarrow}{c}$.

Note that under our assumptions, $c$ cannot contain any homotopically trivial elements, otherwise the bounded disc in $S$ will be divided by $c$ into $n$-gons for $n \geq 4$, but the index of these are nonpositive thus cannot add up to the index of a disc, which is $1$. This fact together with \Cref{defn:avbs}(ii) proved above implies \Cref{defn:avbs}(i) as in \Cref{lemma:ngeq4vbs}.

For \Cref{defn:avbs}(iii), this follows from the moves that we did in the movie.
\end{proof}

If $c$ does not contain any parallel elements, then by \Cref{thm:fulllifthyp}, $T^1 S \backslash \overset{\leftrightarrow}{c}$ is hyperbolic. Hence by \Cref{prop:avbshyp}, $B$ is in fact veering in this case, so taking the dual ideal triangulation proves \Cref{thm:highergenus}.

\begin{rmk} \label{rmk:othersfs}
If, in the movie of train tracks we use to extend the branched surface into $T^1 S|_Q$, we instead unwind along $\bigcup c_{i,N} \cup d_{i,N}$ for $k-4$ times, for $k \geq 4$ dividing $n$, then, if $k$ is even, split $c^{(\frac{k-4}{2})}_{i-1,N}$ and $c^{(\frac{k-4}{2})}_{i,N}$ across $d^{(\frac{k-4}{2})}_{i,N}$, or if $k$ is odd, split $d^{(\frac{k-3}{2})}_{i,N}$ and $d^{(\frac{k-3}{2})}_{i+1,N}$ across $c^{(\frac{k-5}{2})}_{i,N}$, we will arrive at a train track on a torus which is the boundary of a branched surface on a solid torus as in \Cref{fig:othersfscore}, with $k$ levels and $\frac{n}{k}$ corners on each level. Notice, however, that if we cap off the train track using such a branched surface, the solid torus will be filled along a slope different from the meridian which recovers $T^1 S$. Nonetheless, we will still get a branched surface on a Seifert fibered manifold, and indeed, many of these if we vary the number of times we unwind above each complementary region.

\begin{figure}
    \centering
    \resizebox{!}{5cm}{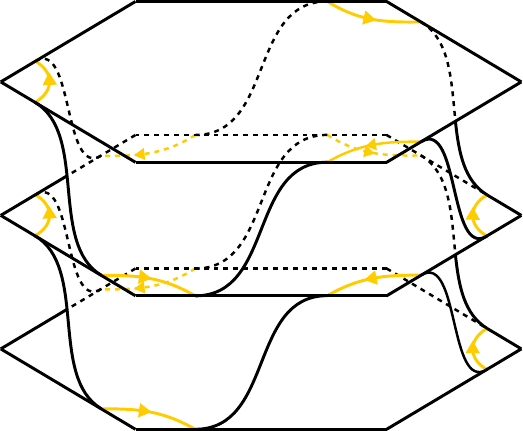}
    \caption{One can cap off the movies of train tracks using these cores instead, to produce foliar branched surfaces on other Seifert fibered manifolds. This particular core has $2$ levels and $3$ corners.}
    \label{fig:othersfscore}
\end{figure}

Moreover, these branched surfaces still satisfy \Cref{defn:vbs}(iii). They also satisfy a modified version of \Cref{defn:vbs}(ii): their complementary regions are surfaces with boundary $L$ times an interval $I$, with $\partial L \times I$ being the cusp circles. From these two properties, one can check that these branched surfaces are foliar, i.e. they are laminar and have product complementary regions, hence carry foliations by \cite{Li02}. It might be interesting to study whether \Cref{defn:vbs}(iii) implies any special properties of the foliations these branched surfaces carry.
\end{rmk}

\begin{rmk} \label{rmk:vbsorbifold}
One can obtain veering branched surfaces on unit tangent bundles of orbifolds via a generalization of the construction. Let $c$ be a collection of curves on an orbifold $S$ that lie away from the cone points. Suppose $c$ is filling, that is, components of $S \cut c$ are polygons with at most one cone point each, and suppose that each complementary region has nonpositive index. (Here, the \textit{index} of a disc with $n$ corners and a cone point of order $p$ is defined to be $\frac{1}{p}-\frac{n}{4}$.) Then one can define the portion of the branched surface above $c$ as before, and when extending into $T^1 S|_Q$, we can take a manifold cover $\widetilde{Q}$ of the region $Q$. We lift the train track on $T^1 Q|_{\partial Q}$ to $T^1 \widetilde{Q}|_{\partial \widetilde{Q}}$, then extend it inside $T^1 \widetilde{Q}$ using the same movie and same block to cap off the movie, and quotient it down to $T^1 Q$ by observing that the movie is equivariant under deck transformations of $T^1 \widetilde{Q} \to T^1 \widetilde{Q}$, which are just lifts of rotations of $Q$.
\end{rmk}

\begin{rmk} \label{rmk:sometriangles}
Here is yet another generalization of the construction. Let $c$ be a filling collection of curves on a surface $S$. We allow $c$ to have multiple intersections now, and consider $c$ as a graph with even valence at each vertex. Consider each complementary region of $c$ as a disc with angle $\frac{\pi}{p_i}$ at each $2p_i$-valent vertex of $c$.

If each complementary region of $c$ has at least $4$ sides, it is possible to construct a veering branched surface on $T^1 S$ with the cores of the complementary regions given by $\overset{\leftrightarrow}{c}$. This is done by inserting, for each complementary region $Q$ of $c$, half of the branched surface we constructed in \Cref{subsec:ngeq4} for the genus $0$ orbifold obtained by doubling $Q$, namely, the half that lies over $Q$. 

Similarly, if each complementary region of $c$ has $3$ sides and the valences of its vertices are $2p_1=4, 2p_2=6, 2p_3>12$ (or $2p_1=4, (2p_2,2p_3)>(8,8)$, or $(2p_1, 2p_2, 2p_3)>(6,6,6)$, respectively), then we can insert halves of the branched surfaces in \Cref{subsec:g2} (or \Cref{subsec:diag}, or \Cref{subsec:hex}, respectively).

Taking the dual ideal triangulations of these veering branched surfaces, we get veering triangulations on $T^1 S \backslash \overset{\leftrightarrow}{c}$ for a larger set of $c$ than those considered in \Cref{thm:highergenus}. However, it seems that this still does not cover all the cases of a filling collection $c$, for which it is possible to construct veering triangulations on $T^1 S \backslash \overset{\leftrightarrow}{c}$, as predicted by \Cref{thm:noperfectfits}.
\end{rmk}

\subsection{Markov partitions for geodesic flows} \label{subsec:Markovpart}

If one applies \Cref{thm:vtpAcorr} to one of the veering triangulations in \Cref{thm:highergenus}, we get an Anosov flow on $T^1 S$. By \Cref{thm:sfspageod}, this flow must be orbit equivalent to the geodesic flow. Hence by \Cref{thm:vtpAcorr}(c), the reduced flow graph encodes a Markov partition for the geodesic flow on $T^1 S$. In this section, we will explicitly work out these Markov partitions.

We will do this by computing the flow graph of $B_Q$ over each complementary region $Q$, then piecing them together. This is simply a task of tracing through the construction in \Cref{subsec:ngeq4}. 

We use the notation as in \Cref{subsec:ngeq4}. For convenience, we will refer to the sector of $B$ containing a branch of some train track $\tau_i$, as well as the corresponding vertex of the flow graph, by the same name as the branch. The drawback to this is that a sector (or its corresponding vertex in the flow graph) may have a number of different names, but we will point this out whenever it happens. 

To capture the full information of the Markov partition, we will also specify the framing of the edges and the planar orderings on the sets of incoming and outgoing edges at each vertex. The framing of the edges is simply given by the fibers of $T^1 S$, since the veering branched surfaces are transverse to the fibers. Alternatively, this can also be deduced from the fact that the unstable foliation of the geodesic flow is transverse to the fibers. We will describe the planar orderings on the incoming and outgoing edges by thinking of the anticlockwise direction on the fibers as the upwards direction, and refer to down, left, and right correspondingly as one goes along the oriented edges of the flow graph.

Also, we will think of the vertex of the flow graph at $d_{i,N}$ as an arrow pointing outwards of $Q$ at $d_i$, the vertex of the flow graph at $c_{i,N}$ as an arrow pointing outwards of $Q$ at the midpoint of $c_i$, and similarly for sectors of the other compass directions. 

We first consider each stage going from $\tau_0$ to $\tau_{n-4}$. Going from $\tau_{2s}$ to $\tau_{2s+1}$, we have to add in $n$ vertices $d^{(s+1)}_{i-s-1,N}$ and add in edges going from $d_{i-2s-2,W}, d^{(s+1)}_{i-s-1,N}, d_{i,E}$ to $d^{(s)}_{i-s-1,N}$. These edges are ordered from top to bottom at $d^{(s)}_{i-s-1,N}$, while the edge from $d_{i-2s-2,W}$ exits from the left side of $d_{i-2s-2,W}$ and the edge from $d_{i,E}$ exits from the right side of $d_{i,E}$. Going from $\tau_{2s+1}$ to $\tau_{2s+2}$, we have to add in $n$ vertices $c^{(s+1)}_{i-s-2,N}$ and add in edges going from $d_{i-2s-3,W}, c^{(s+1)}_{i-s-2,N}, d_{i,E}$ to $c^{(s)}_{i-s-2,N}$. These edges are ordered from top to bottom at $d^{(s)}_{i-s,N}$, while the edge from $d_{i-2s-3,W}$ exits from the left side of $d_{i-2s-2,W}$ and the edge from $d_{i,E}$ exits from the right side of $d_{i,E}$.

Then we consider going from $\tau_{n-4}$ to $\tau_{n-3}$. If $n$ is even, we have to add in $n$ vertices $d^{(\frac{n-2}{2})}_{i,N}$ and add in edges going from $c^{(\frac{n-4}{2})}_{i-1,N}, d^{(\frac{n-2}{2})}_{i,N}, c^{(\frac{n-4}{2})}_{i,N}$ to $d^{(\frac{n-4}{2})}_{i,N}$. These edges are ordered from bottom to top at $d^{(\frac{n-4}{2})}_{i,N}$, while the edge from $c^{(\frac{n-4}{2})}_{i-1,N}$ exits from the left side of $c^{(\frac{n-4}{2})}_{i-1,N}$ and the edge from $c^{(\frac{n-4}{2})}_{i,N}$ exits from the right side of $c^{(\frac{n-4}{2})}_{i,N}$. If $n$ is odd, we have to add in $n$ vertices $c^{(\frac{n-3}{2})}_{i,N}$ and add in edges going from $d^{(\frac{n-3}{2})}_{i,N}, c^{(\frac{n-3}{2})}_{i,N}, d^{(\frac{n-3}{2})}_{i+1,N}$ to $c^{(\frac{n-5}{2})}_{i,N}$. These edges are ordered from bottom to top at $c^{(\frac{n-5}{2})}_{i,N}$, while the edge from $d^{(\frac{n-3}{2})}_{i,N}$ exits from the left side of $d^{(\frac{n-3}{2})}_{i,N}$ and the edge from $d^{(\frac{n-3}{2})}_{i+1,N}$ exits from the right side of $d^{(\frac{n-3}{2})}_{i+1,N}$.

Finally, the core doesn't contain any triple points, hence doesn't contribute any edges to the flow graph, but it does connect up certain sectors. As a result, if $n$ is even, $c^{(\frac{n-4}{2})}_{i-\frac{n}{2},N}$ should be identified with $c_{i,S}$, and $d^{(\frac{n-2}{2})}_{i-\frac{n}{2},N}$ should be identified with $d_{i,S}$. If $n$ is odd, $d^{(\frac{n-3}{2})}_{i-\frac{n-1}{2},N}$ should be identified with $c_{i,S}$, and $c^{(\frac{n-3}{2})}_{i-\frac{n+1}{2},N}$ should be identified with $d_{i,S}$.

See \Cref{fig:hexnonamarkov} top for an illustration of the complete graph projected onto $Q$ for the cases $n=6$ and $n-9$.

\begin{figure}
    \begin{minipage}[c]{7cm}
    \centering
    \resizebox{!}{5cm}{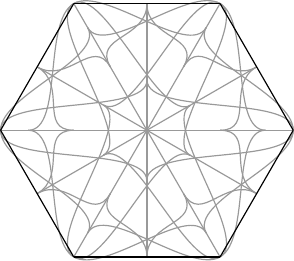}
    \end{minipage}
    \begin{minipage}[c]{7cm}
    \centering
    \resizebox{!}{6.5cm}{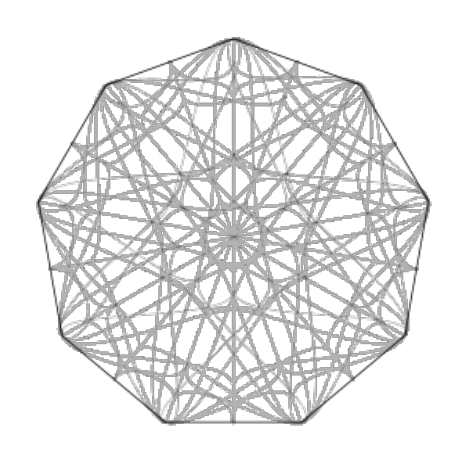}
    \end{minipage}
    \caption{Portions of the flow graphs of veering branched surfaces constructed on $T^1 S \backslash \overset{\leftrightarrow}{c}$ projected onto $n$-gon complementary regions of $c$, for $n=6$ (left) and $n=9$ (right).}
    \label{fig:hexnonamarkov}
\end{figure}

Notice that most vertices of the flow graph that lie along $c$ have more than one outgoing edge, except for the arrows at vertices for which the complementary regions to their left and right are squares. Indeed, for arrows at a side, there are 3 edges branching out of it inside the complementary region which the arrow is pointing inwards. For arrows at a vertex, there are edges branching out of it inside the complementary region which the arrow is pointing inwards, and also inside the complementary regions on its left or right, provided that those complementary regions are not squares. This implies that the only infinitesimal cycles of the flow graph consist of arrows at vertices for which the complementary regions to their left and right are squares, and the diagonals of complementary regions connecting them up. By removing all these cycles and the edges that enter them, we obtain the reduced flow graph.

To obtain the number of flow boxes as promised in the introduction. One can take $c$ to be a collection of curves which divides the surface $S$ into right-angle hexagons. (This can be found, in turn, by taking a pants decomposition and dividing each pair of pants into two hexagons.) Then performing the construction above, the corresponding flow graph can be obtained by piecing up the graphs in each hexagon illustrated in \Cref{fig:hexnonamarkov} left. Since there are no squares, the flow graph is equal to the reduced flow graph. This gives a Markov partition of the geodesic flow on $T^1 S$ which can be encoded by a graph with $-36 \chi(S)$ vertices and $-108 \chi(S)$ edges.

\begin{rmk} \label{rmk:Markovpartorbifold}
We can also obtain Markov partitions for the geodesic flow on the unit tangent bundle of a negatively curved orbifold $S$ by taking a manifold covering $\hat{S}$ of $S$ and constructing the Markov partition on $T^1 \hat{S}$ as above. Then as long as the curve $c$ chosen on $S$, for which the Markov partition is constructed from, is invariant under deck transformations of $\hat{S} \to S$, we can take the quotient to get a Markov partition on $T^1 S$.
\end{rmk}

\begin{rmk} \label{rmk:Markovpartcusped}
One can define geodesic flows for surfaces with a complete Riemannian metric in general. In particular, one can consider hyperbolic surfaces with cusps. Geodesic flows for these surfaces still have stable and unstable line bundles as in \Cref{defn:pAflow}, but a crucial qualitative difference from the closed surface case is that some orbits will be wandering, namely the geodesics that escape to infinity towards a cusp. Now, there is a broader class of flows called \textit{Axiom A flows} to which these flows belong, and there is still a notion of Markov partitions for Axiom A flows (see \cite{Bow70} for details). Using the techniques described here, we can obtain Markov partitions for these geodesic flows on hyperbolic surfaces with cusps as well.

Let $\Sigma$ be a hyperbolic surface with cusps. Let $D=\overline{\Sigma} \cup -\overline{\Sigma}$ be the closed surface obtained by compactifying $\Sigma$ along its cusps, then doubling across the resulting boundary components. The key observation here is that for the geodesic flow on $D$, the hyperbolic set of orbits staying within $\Sigma$ is orbit equivalent to the hyperbolic set of nonwandering points on $\Sigma$ by $\Omega$-stability, which in turn is implied by Axiom A and the no cycle property in this case (see \cite{Sma67}).

Thus let $c \in D$ be a filling collection of curves which contains $\partial \overline{\Sigma}$, and for which there are no triple intersections and the complementary regions $D \cut c$ are $(n \geq 4)$-gons. Perform the construction for this $c \subset D$ to get the reduced flow graph which encodes a Markov partition for the geodesic flow on $T^1 D$. 

Now notice that a cycle of the reduced flow graph is isotopic into $T^1 \overline{\Sigma}$ or into $T^1 (-\overline{\Sigma})$ if and only if it has intersection number zero with the annuli $T^1 D|_{\partial \overline{\Sigma}} \backslash \overset{\leftrightarrow}{\partial \overline{\Sigma}}$. Intersection with these annuli can in turn be represented by a positive cocycle on the reduced flow graph which is nonzero on all edges exiting a vertex on $\partial \overline{\Sigma}$ except for the edges that straddle $\partial \overline{\Sigma}$, that is, in the notation of \Cref{subsec:ngeq4}, the edges from $d_{i,W}$ to $d_{i+1,N}$ for a side $c_i$ of a complementary region that lies on $\partial \overline{\Sigma}$. If one follows these edges, one recovers the lifts of $\partial \overline{\Sigma}$, which are not closed orbits of the geodesic flow on $T^1 \Sigma$. The conclusion is that a cycle of the reduced flow graph in $T^1 D$ is isotopic to a closed orbit of the geodesic flow on $T^1 \Sigma$ or on $T^1(- \Sigma)$ if and only if it does not pass through the vertices on $\partial \overline{\Sigma}$. Hence we can discard all vertices and edges of the reduced flow graph in $T^1 D$ that lie in $-\overline{\Sigma}$ and on its boundary to get the desired Markov partition.
\end{rmk}

\section{Discussion and further questions} \label{sec:questions}

We discuss some future directions coming out of this paper. 

In \Cref{subsec:vsur}, we conjectured that under \Cref{thm:vtpAcorr}, vertical surgery should correspond to Goodman-Fried-Dehn surgery along a suitable closed orbit. However we do not have a good guess for what horizontal surgery should correspond to. We remark that the horizontal surgery curves that we considered in our constructions seem to correspond to curves of the form $\{R_{\pm \frac{\pi}{2}} c'(t) \}$, where $c$ is a small circle around a point. These curves are E-transverse in the terminology of \cite{FH13}. This would suggest that the horizontal surgeries we performed correspond to Foulon-Hasselblatt surgery under \Cref{thm:vtpAcorr}. But since Foulon-Hasselblatt surgery only applies to contact Anosov flows, we are not sure about the general case for horizontal surgeries.

It is known that the unit tangent bundle over the modular orbifold $S^2(2,3,\infty)$ is homeomorphic to the complement of the trefoil. Under this homeomorphism, a collection of closed orbits of the geodesic flow on $T^1 S^2(2,3,\infty)$ is called a Lorenz knot or link, see \cite[Section 3.5]{Ghy07}. It turns out that Lorenz knots and links satisfy some special properties, such as being fibered and prime (see \cite{BW83a} and \cite{Wil84}). These results are proved by analyzing what is called the Lorentz knotholder. In the language of this paper, the Lorentz knotholder can be obtained from a suitable Markov partition $\{I^{(i)}_s \times I^{(i)}_u \times [0,1]_t \}$ by taking the rectangular strips $I^{(i)}_u \times [0,1]_t$ and gluing $J^{(ij,k)}_u \times \{ 1 \}$ to $I^{(j)}_u \times \{0 \}$ in accordance with how the flow boxes meet along their top and bottom faces; or alternatively, thickening up the graph encoding the Markov partition in the $u$ direction.

Now, one can similarly obtain knotholders by thickening up the flow graphs of the veering branched surfaces we constructed in \Cref{thm:genus0}. It might be interesting to ask if the knots or links carried by these knotholders have any special properties. This would be equivalent to studying the knottedness and linkedness of closed orbits of geodesic flows of negatively curved genus zero orbifolds.

More generally, it would be interesting to study the veering branched surfaces themselves, and in particular identify all the vertical and horizontal surgery curves on them. One could then see if other families of knots or links admit veering triangulations on their complements.

In \Cref{sec:highergenus}, we constructed veering triangulations on $T^1 S \backslash \overset{\leftrightarrow}{c}$ for certain, but not all, filling multicurves $c$. However, we will show in \Cref{sec:directproof} that such veering triangulations should exist for all filling multicurves $c$, and furthermore we will characterize when such veering triangulations exist on $T^1 S \backslash \overset{\rightarrow}{c}$, for oriented multicurves $c$. This prompts the question of how one can construct such veering triangulations, or their dual veering branched surfaces, in a reasonably explicit way.

The following observation may serve as a starting point for the questions asked in the last two paragraphs. In the veering triangulation census, one can find other Montesinos knots complements which are not covered by our constructions. For example, K10n14 $= M(\frac{1}{3}, \frac{1}{3}, -\frac{3}{5})$ admits the veering triangulation \texttt{gLLMQaedfdffjxaxjkn\_200211} and K12n121 $= M(\frac{1}{2}, \frac{1}{3}, -\frac{9}{11})$ admits the veering triangulation \texttt{hLAPzkbcbeefgghhwjsahr\_2112212}. We remark that the double branched covers of these knots are fiberwise double covers of unit tangent bundles, hence the Anosov flows on these double branched covers which one obtains from \Cref{thm:vtpAcorr} must be the lift of a geodesic flow by \Cref{thm:sfspageod}. By understanding these triangulations or ways to construct them, one might gain insight on how to modify the constructions in this paper.

We already mentioned that the veering triangulations we constructed in \Cref{thm:genus0} and \Cref{thm:highergenus} are necessarily layered. This is because the flow in $T^1 S \backslash \overset{\leftrightarrow}{c}$ admits Birkhoff sections (see \cite[Theorem E and Theorem D]{CD20}). A possibly interesting way of investigating the monodromies of the Birkhoff sections is to take their intersections with the veering branched surfaces we constructed, in order to obtain periodic folding sequences of train tracks which then allows one to deduce the corresponding monodromies.

\appendix

\section{Characterization of no perfect fits} \label{sec:directproof}

In this appendix, we will characterize the orbits relative to which the geodesic flow is without perfect fits. This determines when it is possible to construct veering triangulations on drilled unit tangent bundles which give the geodesic flow under \Cref{thm:vtpAcorr}. As pointed out in \Cref{sec:questions}, it would be interesting to be able to describe these constructions explicitly. 

Before we state the result, we set up some notation.

\begin{defn}
Let $\Sigma$ be an oriented hyperbolic surface. Let $c, d$ be two oriented geodesics intersecting at point $x$. We say that $c$ intersects $d$ \textit{positively} at $x$ if $(c', d')$ is a positive basis at $x$, otherwise we say $c$ intersects $d$ \textit{negatively} at $x$. We also say that $x$ is a \textit{positive or negative intersection point} of $c$ with $d$, respectively.

More generally, we will call the signed angle from $c'$ to $d'$ at $x$ the \textit{angle at the intersection}. Note that this quantity only makes sense mod $2\pi$. Also note that $c$ intersects $d$ positively or negatively at $x$ when the angle at $x$ is in $(0,\pi)$ or $(-\pi,0)$ respectively.
\end{defn}

For $\Sigma=\mathbb{H}^2$, complete geodesics have a forward and backward endpoint on $\partial \mathbb{H}^2=S^1_\infty$. We will orient $S^1_\infty$ anticlockwisely, and use notation such as $(\xi_1,\xi_2)$, for $\xi_1,\xi_2 \in S^1_\infty$, to mean the interval from $\xi_1$ to $\xi_2$ on $S^1_\infty$ under this orientation. Under this notation, a complete geodesic $c$ in $\mathbb{H}^2$ with forward endpoint $\xi_1$ and backward endpoint $\xi_2$ intersects another complete geodesic $d$ positively if and only if the forward endpoint of $d$ lies in $(\xi_1,\xi_2)$ and the backward endpoint of $d$ lies in $(\xi_2,\xi_1)$. 

We are now ready to state the theorem.

\begin{thm} \label{thm:noperfectfits}
Let $\Sigma$ be a closed oriented hyperbolic surface and $c$ be a collection of oriented closed geodesics. Then the geodesic flow on $T^1 \Sigma$ has no perfect fits relative to the lift $\overset{\rightarrow}{c}$ if and only if every oriented closed geodesic $d$ on $\Sigma$ has a positive intersection point with some element of $c$. 

In particular, if $c$ is a collection of closed geodesics, then the geodesic flow on $T^1 \Sigma$ has no perfect fits relative to the full lift $\overset{\leftrightarrow}{c}$ if and only if $c$ is filling.
\end{thm}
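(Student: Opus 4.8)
\textbf{Proof proposal for \Cref{thm:noperfectfits}.}

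The plan is to work entirely in the universal cover. Identify $\widetilde{\Sigma} = \mathbb{H}^2$, so that $T^1\widetilde{\Sigma} \cong PSL_2\mathbb{R}$ and the lifted geodesic flow is the standard geodesic flow on $T^1\mathbb{H}^2$. The orbit space $\mathcal{O}$ of the lifted flow is canonically identified with the space of (oriented) complete geodesics in $\mathbb{H}^2$, which in turn is the open M\"obius band $(S^1_\infty \times S^1_\infty \setminus \Delta)$ of ordered pairs of distinct ideal points; the foliations $\mathcal{O}^s, \mathcal{O}^u$ are the two foliations by fixing the backward, resp.\ forward, endpoint. The set $\mathcal{C}$ determined by $\overset{\rightarrow}{c}$ consists of all complete geodesics in $\mathbb{H}^2$ that cover some element of $c$, i.e.\ all lifts of the closed geodesics in $c$. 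A perfect fit rectangle in $\mathcal{O}$, in these coordinates, is a region bounded by two leaves of $\mathcal{O}^s$ (two geodesics $g_1, g_2$ with the same forward endpoint $\eta$) and two leaves of $\mathcal{O}^u$ (two geodesics $h_1, h_2$ with the same backward endpoint $\xi$), with the ideal vertex being the geodesic from $\xi$ to $\eta$; concretely such a rectangle corresponds to a choice of two disjoint arcs $J^s \subset S^1_\infty$ and $J^u \subset S^1_\infty$ linked in the appropriate way, and it is \emph{disjoint from $\mathcal{C}$} precisely when no lift of any $c_i$ has forward endpoint in $J^s$ and backward endpoint in $J^u$.

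The first main step is to translate ``disjoint from $\mathcal{C}$'' into a statement about endpoints of lifts of the $c_i$ on $S^1_\infty$. Writing $\Lambda^+_i$ (resp.\ $\Lambda^-_i$) for the closure in $S^1_\infty$ of the set of forward (resp.\ backward) endpoints of lifts of $c_i$, one shows: a perfect fit rectangle disjoint from $\mathcal{C}$ exists if and only if there are two disjoint open arcs $I^+, I^- \subset S^1_\infty$, positively linked (meaning $I^-$, then $I^+$, then the rest, in the cyclic order, matching the orientation convention for positive intersection stated in the excerpt), such that for every $i$, no lift of $c_i$ has forward endpoint in $I^+$ \emph{and} backward endpoint in $I^-$. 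The next step is the geodesic-dynamics input: I would use the fact that for a closed geodesic $d$ on $\Sigma$ with a lift $\tilde d$ having endpoints $(\delta^+,\delta^-)$, the intersection points of $d$ with $c_i$ correspond to $\pi_1\Sigma$-translates of $\tilde d$ crossing lifts of $c_i$, and $d$ has a \emph{positive} intersection with $c_i$ iff some lift of $c_i$ crosses $\tilde d$ positively, iff (by the boundary criterion recalled just before the theorem) some lift of $c_i$ has forward endpoint in $(\delta^+,\delta^-)$ and backward endpoint in $(\delta^-,\delta^+)$.

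With these two reformulations in hand the theorem becomes a clean ping-pong / limit-set argument. For the ``if'' direction (no positive-intersection-avoiding geodesic $\Rightarrow$ no perfect fit): given a perfect fit rectangle disjoint from $\mathcal{C}$, I want to produce a closed geodesic $d$ with no positive intersection with any $c_i$, contradicting the hypothesis. The idea is that the two linked arcs $I^+, I^-$ cut out a nonempty open set of complete geodesics none of which is positively crossed by any lift of a $c_i$; since $c$ (or $\overset{\rightarrow}{c}$) is a \emph{finite} collection of closed geodesics, this open condition is $\pi_1$-invariant in a suitable sense and persists under the geodesic flow, so by a standard closing/density argument (density of endpoint pairs of closed geodesics in $S^1_\infty \times S^1_\infty$, plus the fact that the ``no positive crossing'' condition is open and flow-invariant) one can find a closed geodesic $d$ with endpoints in $I^+ \times I^-$, which then has no positive intersection with any $c_i$. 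Conversely, for the ``only if'' direction: given a closed geodesic $d$ with no positive intersection with any $c_i$, take a lift $\tilde d$ with endpoints $(\delta^+,\delta^-)$; the hypothesis says no lift of any $c_i$ has forward endpoint in $(\delta^+,\delta^-)$ and backward endpoint in $(\delta^-,\delta^+)$, so sufficiently small arcs $I^+ \subset (\delta^+,\delta^-)$ around $\delta^+$ and $I^- \subset (\delta^-,\delta^+)$ around $\delta^-$ have the same property by openness, and these cut out an honest perfect fit rectangle disjoint from $\mathcal{C}$ once one checks the product-foliation condition (which is automatic here since in the endpoint coordinates $\mathcal{O}^s, \mathcal{O}^u$ are literally the coordinate foliations). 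Finally, the ``in particular'' clause follows because $\overset{\leftrightarrow}{c}$ consists of each $c_i$ with both orientations, so ``every oriented $d$ has a positive intersection with some element'' becomes ``every unoriented $d$ crosses some $c_i$ transversally,'' i.e.\ $c$ is filling.

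I expect the main obstacle to be the ``if'' direction, specifically justifying the passage from an abstract perfect fit rectangle disjoint from $\mathcal{C}$ to an actual \emph{closed} geodesic avoiding all positive intersections. The subtlety is that the rectangle only gives an open \emph{region} of geodesics, and one must check that the ``no positive crossing by any lift of $c_i$'' condition is genuinely closed/open in the right topology and compatible with the density of closed-geodesic endpoints --- this is where finiteness of $c$ and properness of the $\pi_1\Sigma$-action on $T^1\mathbb{H}^2$ really get used, and where one must be careful that crossings happening ``at infinity'' (endpoints landing on the boundary of the arcs) do not cause trouble. A convenient way to handle this cleanly may be to phrase everything in terms of the limit sets $\Lambda^{\pm}_i$ and a compactness argument on $S^1_\infty$, rather than chasing individual lifts.
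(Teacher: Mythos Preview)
Your setup and the ``only if'' direction are essentially the paper's argument: once you know no lift of any $c_i$ has forward endpoint in $(\delta^+,\delta^-)$ and backward endpoint in $(\delta^-,\delta^+)$, the entire lozenge $([\delta^+,\delta^-]\times[\delta^-,\delta^+])\setminus\Delta$ is disjoint from $\mathcal{C}$, and restricting near one of its two ideal vertices $(\delta^\pm,\delta^\pm)$ gives a perfect fit rectangle. (Your ``small arcs around $\delta^+$ and around $\delta^-$'' do not share a boundary point, so as written they produce an ordinary rectangle rather than a perfect fit rectangle; but this is a cosmetic fix.)

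The ``if'' direction, however, contains a genuine confusion. You assert that the arcs $I^+,I^-$ coming from the perfect fit rectangle ``cut out a nonempty open set of complete geodesics none of which is positively crossed by any lift of a $c_i$.'' This is not what the perfect fit condition says. The condition ``the rectangle is disjoint from $\mathcal{C}$'' means that no lift of any $c_i$, \emph{viewed as a point of $\mathcal{O}$}, lies in $I^+\times I^-$. It does \emph{not} say that a geodesic $g$ with $(g^+,g^-)\in I^+\times I^-$ is uncrossed by lifts of $c$: for that you would need $\mathcal{C}$ disjoint from the entire lozenge $(g^+,g^-)\times(g^-,g^+)$, which is a much larger region than the rectangle. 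Concretely, take the paper's normalization $I^+=[\xi_1,\xi_0]$, $I^-=[\xi_0,\xi_2]$ and a candidate closed geodesic with $\delta^+\in(\xi_1,\xi_0)$, $\delta^-\in(\xi_0,\xi_2)$; a lift of some $c_i$ with forward endpoint just past $\xi_0$ in $(\xi_0,\delta^-)$ and backward endpoint anywhere in $(\delta^-,\delta^+)\setminus[\xi_0,\xi_2]$ positively crosses $\tilde d$ while lying outside the rectangle. Since forward endpoints of lifts of each $c_i$ are dense in $S^1_\infty$, this situation is not avoided by shrinking the arcs, and your density-of-closed-geodesics argument does not go through. Your suggested compactness rephrasing via $\Lambda_i^\pm$ does not visibly close this gap either.

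The paper's route for this direction is genuinely different and supplies the missing idea. Rather than looking for a closed geodesic with endpoints \emph{in} the rectangle, one takes a geodesic ray $\tilde d$ converging to the \emph{ideal vertex} $\xi_0$ itself. The perfect fit condition then forces every positive crossing of $\tilde d$ by a lift of $c$ to have angle tending to $0$ or $\pi$ as the crossing point escapes to infinity (otherwise the endpoints of that lift would land in the rectangle). After first reducing to the case where every element of $c$ itself has both a positive and a negative crossing with some element of $c$ (else take $d$ to be that element or its reverse), one upgrades this to: the projected ray $\overset{\rightarrow}{d}\subset T^1\Sigma$ eventually stays a uniform distance from the compact set $\{R_\theta c_i'(t):\theta\in[-\pi,0]\}$. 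The Anosov closing lemma then produces a nearby closed orbit $d'$ which also avoids that set, hence has no positive intersection with any $c_i$. The auxiliary reduction and the ray-to-the-ideal-vertex trick are exactly the ingredients your proposal is missing.
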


\begin{proof}
This relies on an interpretation of the orbit space of the universal cover of the geodesic flow on $T^1 \Sigma$ in terms of the circle at infinity $\partial \widetilde{\Sigma}=S^1_\infty$. This viewpoint is well known to experts, but we explain it here for completeness.

Orbits of $T^1 \Sigma$ which lie in the same stable leaf lift to lifts of oriented geodesics in $\widetilde{\Sigma}=\mathbb{H}^2$ which converge exponentially. Equivalently, these are the oriented geodesics which share a forward endpoint on $\partial \mathbb{H}^2 =S^1_\infty$. Hence we can canonically parametrize the stable leaves in $T^1 \mathbb{H}^2$ by $S^1_\infty$, where we send each stable leaf to the common forward endpoints of the orbits in it. Lifting this to the universal cover $\widetilde{T^1 \Sigma}=\widetilde{T^1 \mathbb{H}^2}$, the stable leaf space in the universal cover can be parametrized by the line $\widetilde{S^1_\infty}$. Similarly, one can canonically parametrize the unstable leaves in $T^1 \mathbb{H}^2$ by $S^1_\infty$ by sending each unstable leaf to the common backward endpoints of the orbits in it.

We can now define a map from the orbit space of the geodesic flow on $T^1 \mathbb{H}^2$ to $S^1_\infty \times S^1_\infty$. We claim that this map is a homeomorphism onto $(S^1_\infty \times S^1_\infty) \backslash \Delta$ where $\Delta = \{(\xi, \xi): \xi \in S^1_\infty\}$ is the diagonal. This boils down to the fact that given $\xi_1, \xi_2 \in S^1_\infty$, if $\xi_1 \neq \xi_2$, there is a unique oriented geodesic in $\mathbb{H}^2$ with forward endpoint $\xi_1$ and backward endpoint $\xi_2$; and if $\xi_1=\xi_2$ there cannot be such a geodesic. 

One can lift this to a map which embeds the orbit space of the geodesic flow on $\widetilde{T^1 \Sigma}=\widetilde{T^1 \mathbb{H}^2}$ into $\widetilde{S^1_\infty} \times \widetilde{S^1_\infty}$ as a diagonal strip. However, we will choose to operate on the level of $T^1 \mathbb{H}^2$, since it makes the language a bit simpler.

We return to proving the theorem. Let $c$ be a collection of oriented closed geodesics. If there is an oriented closed geodesic $d$ in $\Sigma$ which does not have positive intersection points with any element of $c$, then lifting to $\mathbb{H}^2$, there is a geodesic $\widetilde{d}$ which does not have positive intersection points with any element in the set of lifts of elements of $c$, which we denote as $\widetilde{c}$. Let $\xi_1, \xi_2$ be the forward and backward endpoints of $\widetilde{d}$ respectively. Consider the region $([\xi_1, \xi_2] \times [\xi_2, \xi_1]) \backslash \Delta$ in $S^1_\infty \times S^1_\infty$, this is a rectangle with two opposite ideal vertices, which is called a \textit{lozenge} in \cite{Fen99}. We claim that there are no lifts of elements in $\widetilde{c}$ in the interior of the lozenge. Otherwise there is an element of $\widetilde{c}$ that has forward endpoint on $(\xi_1, \xi_2)$ and backward endpoint on $(\xi_2, \xi_1)$. $\widetilde{d}$ must intersect such a curve positively at some point, thus on $\Sigma$ there must be an element of $c$ for which $d$ intersects positively with. Now by restricting to near an ideal vertex of the lozenge (and lifting to the universal cover), we get a perfect fit rectangle which is disjoint from $\widetilde{c}$.

Conversely, suppose that there is a perfect fit rectangle disjoint from $c$. We want to find an oriented closed geodesic $d$ that has no positive intersection points with any element of $c$. We can assume that every element of $c$ intersects some element of $c$ positively and some element of $c$ negatively, since otherwise we can just pick $d$ to be some element of $c$, or its reverse.

With this assumption in place, without loss of generality, let the image of the perfect fit rectangle in the orbit space of $T^1 \mathbb{H}^2$ be $([\xi_1,\xi_0] \times [\xi_0, \xi_2]) \backslash \Delta$. Choose a geodesic ray $\widetilde{d}$ in $\mathbb{H}^2$ that has forward endpoint at $\xi_0$, and project it down to a geodesic ray $d$ on $\Sigma$. Notice that $d$ may not be periodic. 

Let $R_\theta: T^1 \Sigma \to T^1 \Sigma$ be the map that rotates vectors by $\theta$ counterclockwise. We claim that for every $\epsilon>0$, $\overset{\rightarrow}{d}$ eventually stays a bounded distance away from $\{R_\theta c'_i(t): \theta \in (-\pi+\epsilon, -\epsilon), \text{$c_i$ is an element of $c$}\}$ in $T^1 \Sigma$. For otherwise, lifting to $\mathbb{H}^2$, there are positive intersection points of $\widetilde{d}$ with elements of $\widetilde{c}$ which limit to infinity along $\widetilde{d}$ and have angles at the intersections bounded away from $0$ and $\pi$. The corresponding elements of $\widetilde{c}$ must have forward and backward endpoints converging to $\xi_0$ hence eventually fall into $(\xi_1, \xi_0)$ and $(\xi_0, \xi_2)$ respectively and lie in the interior of the perfect fit rectangle.

We further claim that $\overset{\rightarrow}{d}$ stays a definite distance away from $\overset{\leftrightarrow}{c}$ in $T^1 \Sigma$. For otherwise there must be arbitrarily long segments of $d$ that fellow-travel with an element of $c$ (possibly with reversed orientation). But we have assumed that every element of $c$ has both positive and negative intersection points with some elements of $c$, and since $c$ is a collection of closed geodesics, these intersection points have angle bounded away from $0$ and $\pi$. Hence near one of these intersection points, the fellow-travel segment of $d$ will intersect an element of $\widetilde{c}$ at a positive angle bounded away from $0$ and $\pi$, contradicting our claim in the last paragraph.

The two claims together imply that $\overset{\rightarrow}{d}$ eventually stays a definite distance away from $\{R_\theta c'_i(t): \theta \in [-\pi, 0], \text{$c_i$ is an element of $c$}\}$. Now we can apply the closing lemma for Anosov flows (\cite[Theorem 2.4]{Bow72}) on $d$ to get a closed geodesic $d'$ on $\Sigma$ which only has negative intersection points (if any) with elements of $c$, thus concluding the proof in the reverse direction.
\end{proof}

We remark that \Cref{thm:noperfectfits} can be applied to a non-orientable surface by lifting to its orientable double cover.

\section{Table of triangulations on Montesinos link complements in the census} \label{sec:table}

In this appendix, we compile the veering triangulations we constructed on Montesinos link complements in \Cref{thm:genus0} that appear in the veering triangulation census \cite{VeeringCensus}.

We remind the reader of our notation: $M(\frac{1}{p_1}+1,\frac{1}{p_2}-1,...,\frac{1}{p_n}-1)$ is the Montesinos link whose double branched cover is the unit tangent bundle of the orbifold $S^2(p_1,...,p_n)$, and we constructed veering branched surfaces on complements of these Montesinos links for which $e:=\sum \frac{1}{p_i}-n+2<0$. In the tables we present here, we consider a finite collection of these knots and links. Each one of these is represented by a cell in the table, with the data within being read as:

\begin{tiny}
\begin{center}
\begin{tabular}{|c|}
\hline

$(p_1,...,p_n)$ \\
Name of knot/link [Name of knot/link exterior] \\
IsoSig code of triangulation (if applicable) \\
\hline

\end{tabular}
\end{center}
\end{tiny}

The tables have been organized in an attempt to balance aesthetics and efficiency. As a result, the values of $(p_1,...,p_n)$ in some cells are such that $e:=\sum \frac{1}{p_i}-n+2 \geq 0$. The Montesinos links for these values are not hyperbolic, hence their complements cannot admit veering triangulations at all. For these cells we put `/' for the IsoSig code.

We describe how we compiled this data. For each knot or link, we first obtained its PD code using Kyle Miller's KnotFolio \cite{KnotFolio}. Then we input this code into SnapPy \cite{SnapPy} and asked it to identify both the name of the knot or link (among the census of all knots and links with 14 crossings or less), and the name of the knot or link exterior (among the census of cusped hyperbolic 3-manifolds that can be triangulated with 9 tetrahedra or less). 

For the IsoSig codes of the triangulations, we make use of the veering census. As described in \Cref{sec:genus0}, we know the number of tetrahedra and the number of blue and red edges in our triangulations. We also know that each end has exactly one ladderpole curve, since in the double cover $T^1 S \backslash \overset{\leftrightarrow}{c}$ each end has two ladderpole curves, and the involution switches the two. As remarked in \Cref{sec:genus0}, we know that these triangulations are layered. Finally, in terms of the 3-manifold, we also know that the homology of these Montesinos link complements are $\mathbb{Z}^b$ for $b =$ number of components in the knot or link. With these pieces of information, we can reasonably filter out the possible candidates in the veering triangulation census. 

Then we inputted each candidate triangulation into SnapPy and asked it to try to identify the 3-manifold, or at least compute its hyperbolic volume. (Most of the identification work is already done in the census.) Meanwhile from before, we already have the data of the actual knot and link complements in SnapPy, so we can eliminate those candidate triangulations that have the wrong census name or wrong hyperbolic volume. In most cases this directly identifies the triangulation we were looking for.

There were 2 cases which we had to do extra work. For these we inspected the remaining candidate triangulations more carefully using Regina \cite{regina}. We describe the analysis in both cases below.

For $(p_1,p_2,p_3)=(2,6,6)$, the above procedure leaves us undecided between \[\texttt{oLLvAwQMLQcbeehgiijjlnlmnnxxxavccaaaxcavc\_21112002212120}\] and \[\texttt{ovvLALQLQQchgggkijmnllnmnmaaaaaggaaggaaaa\_10000111111100}.\] We inputted these triangulations into Regina and checked their dual graph. Recall that the dual graph of a veering triangulation is the same as the branch locus of its dual veering branched surface. Hence from the descriptions in \Cref{sec:genus0}, one can work out the dual graph of the actual triangulation we are looking for. We can then eliminate \[\texttt{oLLvAwQMLQcbeehgiijjlnlmnnxxxavccaaaxcavc\_21112002212120}\] since its dual graph has a pair of vertices with two edges between them, whereas the dual graph of the actual triangulation does not.

For $(p_1,p_2,p_3,p_4)=(2,2,4,4)$, the above procedure leaves us undecided between \[\texttt{qvLAMAwPLzQkdcegfghiklmonppopbbbahabhbhabbhhga\_2011022001120201}\] and \[\texttt{qvvLLMLzQQQkfgfjiloknoplmnoppaaaavvavaaavvaaav\_1020212211211200}.\] As in the last case, we worked out the dual graph of the actual triangulation and compared it with that of the two candidates. In this case, we can eliminate \[\texttt{qvLAMAwPLzQkdcegfghiklmonppopbbbahabhbhabbhhga\_2011022001120201}\] because its dual graph has triangles (i.e. 3 vertices which have edges between each of them), whereas the dual graph of the actual triangulation does not.

We emphasize that we were able to identify \textit{all} the veering triangulations we constructed on the Montesinos link complements which appear in the census, albeit using this somewhat backwards methodology. With some expertise in using software such as Snappy or Regina, one can probably directly construct the triangulations then extract their IsoSig codes and identify them in the census more directly.

There are some obvious patterns exhibited by the compiled data that we would be remiss not to point out. Firstly, the minimal crossing number of all the listed knots and links equal to $\sum p_i$. By observing that $M(\frac{1}{p_1}+1, \frac{1}{p_2}-1,...,\frac{1}{p_n}-1)=M(-1+\frac{1}{p_1},...,-1+\frac{1}{p_{n-2}},\frac{1}{p_{n-1}},\frac{1}{p_n})$, and using the diagram for the latter with continued fraction expansions $\frac{1}{p_i}=0+\frac{1}{p_i}, -1+\frac{1}{p_i}=0+\frac{1}{1+\frac{1}{p_i-1}}$, as explained in \Cref{subsec:montesinos}, we see that the minimal crossing number is indeed at most $\sum p_i$. We conjecture that this upper bound is realized for all Montesinos links of the form $M(\frac{1}{p_1}+1, \frac{1}{p_2}-1,...,\frac{1}{p_n}-1)$.

Secondly, all the listed triangulations have a unique veering structure (up to reversing the transverse data). In fact, this is the reason why we have just listed the IsoSig codes without the data of the taut angle structure, which is what the census does, since in general there are triangulations with multiple veering structures. It would be interesting if this uniqueness holds in general, or even for the triangulations constructed in \Cref{sec:highergenus}. 

Lastly, we note that all the listed triangulations are reported to be geometric by SnapPy. Now, this might not be very indicative, since the vast majority of veering triangulations listed in the census are geometric (but their proportion is conjectured to tend to zero, and this is proven for layered veering triangulations in \cite{FTW20}). Nevertheless, it would be very interesting if all the veering triangulations we have constructed in this paper are geometric. Among other things, this would imply lower bounds for volumes of hyperbolic 3-manifolds of the form $T^1 S \backslash \overset{\leftrightarrow}{c}$, using the results of \cite{FG13}. We remark that lower bounds for volumes of such 3-manifolds have been recently obtained in \cite{CKMP21} and \cite{CRY20}. We also remark that Nimershiem has constructed geometric triangulations of $S^3 \backslash M(\frac{3}{2}, -\frac{2}{3}, \frac{1}{6+k}-1)$ in \cite{Nim21}. It is not clear to us at this point whether Nimershiem's triangulations are the ones dual to the veering branched surfaces we constructed.

\includepdf[pages=-,landscape=true]{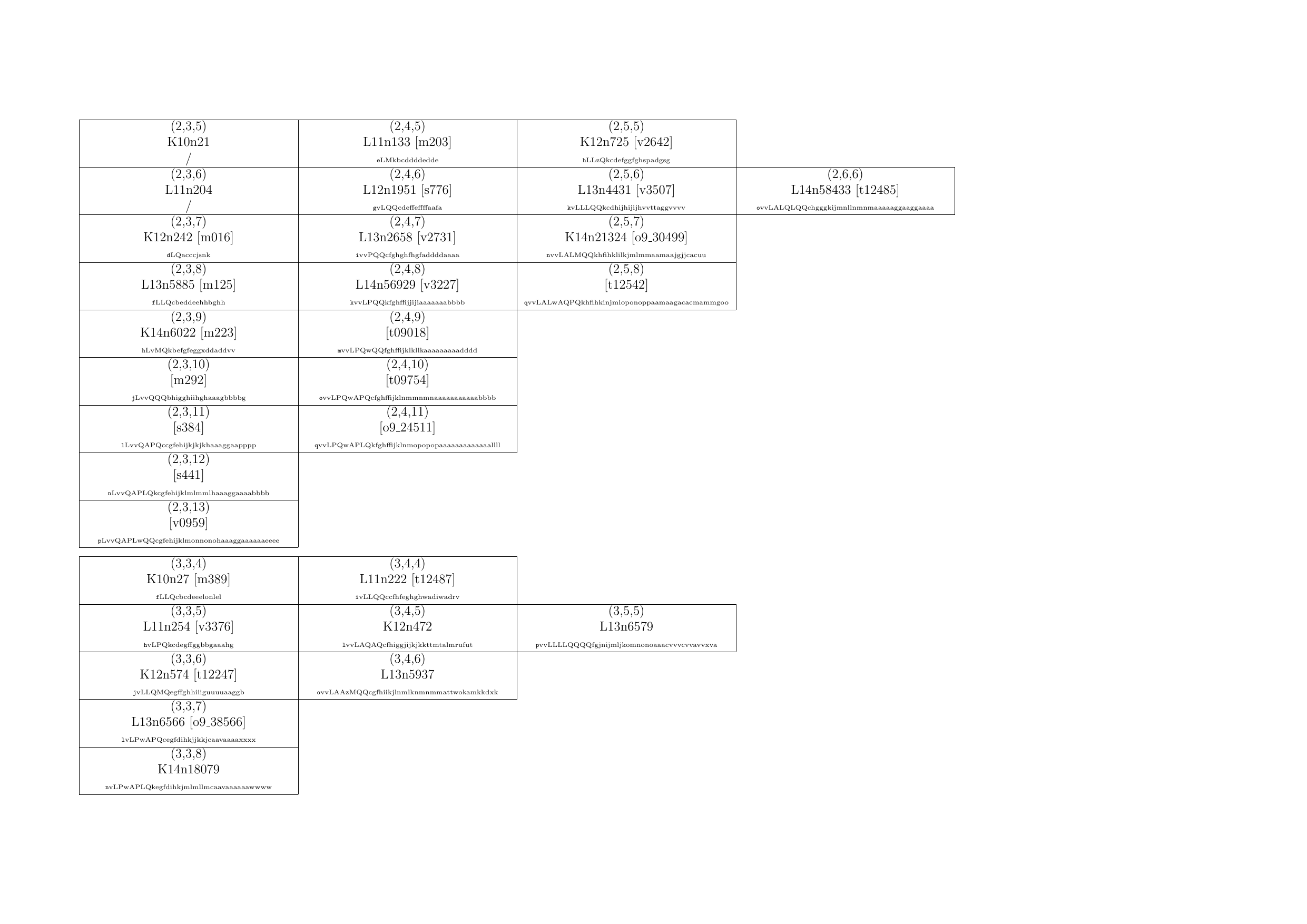}

\bibliographystyle{alpha}

\bibliography{bib.bib}

\end{document}